\documentclass[hidelinks,onefignum,onetabnum]{siamart220329}
\usepackage{tia_vpal}
\usepackage[symbol]{footmisc}

\newlength{\wdth}
\usepackage{hyperref}


\title{
The Variable Projected Augmented Lagrangian Method\thanks{Submitted to the editors \today.
\funding{This work was partially supported by the National Science Foundation (NSF) under grants  DMS-1723005, DMS-2152661 (Chung) and DMS-1913136, DMS-2152704 (Renaut)}}}

\author{Matthias Chung\thanks{Department of Mathematics, Emory University, Atlanta, GA
  (\email{matthias.chung@emory.edu}, \url{http:w ww.math.emory.edu/\string~mchun45/}).}
\and Rosemary Renaut\thanks{School of Mathematical and Statistical Sciences, Arizona State University, Tempe, AZ
  (\email{renaut@asu.edu}, \url{https://isearch.asu.edu/profile/85017}).}}

\begin{document}

\maketitle

\begin{abstract}
Inference by means of mathematical modeling from a collection of observations remains a crucial tool for scientific discovery and is ubiquitous in application areas such as signal compression, imaging restoration, and supervised machine learning. The inference problems may be solved using variational formulations that provide theoretically proven methods and algorithms. With ever-increasing model complexities and growing data size, new specially designed methods are urgently needed to recover meaningful quantifies of interest. We consider the broad spectrum of linear inverse problems where the aim is to reconstruct quantities with a sparse representation on some vector space; often solved using the (generalized) least absolute shrinkage and selection operator (lasso).  The associated optimization problems have received significant attention, in particular in the early 2000's, because of their connection to compressed sensing and the reconstruction of solutions with favorable sparsity properties using augmented Lagrangians, alternating directions and splitting methods. We provide a new perspective on the underlying $\ell^1$ regularized inverse problem by exploring the generalized lasso problem through variable projection methods. We arrive at our proposed variable projected augmented Lagrangian ({\tt vpal}) method. We analyze this method and provide an approach for automatic regularization parameter selection based on a degrees of freedom argument. Further, we provide numerical examples demonstrating the computational efficiency for various imaging problems.
\end{abstract}

\begin{keywords}
 Variable projection, ADMM, regularization, generalized lasso, augmented Lagrangian, splitting methods, $\chi^2$ test
\end{keywords}
\begin{MSCcodes}
65F10, 65F22, 65F20, 90C06
\end{MSCcodes}
\section{Introduction}\label{sec:intro}

Many scientific problems are modeled as
\begin{equation} \label{eq:invprob}
  \bfb = \bfA \bfx_{\rm true} + \bfvarepsilon,
\end{equation}
where $\bfx_{\rm true} \in \bbR^n$ is a desired solution, $\bfA\in\bbR^{m\times n}$ denotes a forward process that includes a physical model and the mapping onto observations $\bfb \in \bbR^m$, and $\bfvarepsilon$ is, for simplicity, some unknown additive Gaussian noise $\bfvarepsilon\sim\calN({\bf0}, \sigma^2\bfI_m)$. Given observations $\bfb$ and the forward process $\bfA$, in \emph{inverse problems} we aim to obtain an approximate solution $\widehat\bfx$ to $\bfx_{\rm true}$~\cite{hansen2010discrete}. We assume the inverse problem is either ill-posed (i.e., a solution does not exist, is not unique, or does not depend continuously on the data~\cite{hadamard1923lectures}) or that we are given specific prior knowledge about the unknown $\bfx_{\rm true}$. Inverse problems arise in many different fields, such as medical imaging, geophysics, and signal processing to name a few~\cite{hansen2006deblurring, meju1994geophysical, santamarina2005discrete}.

\emph{Regularization}
is included to incorporate the prior knowledge and/or to stabilize the inversion process to obtain a meaningful approximation $\widehat \bfx$.  Many types of regularization exist; here we focus on the variational regularization problem
\begin{equation}\label{eq:general_lasso}
    \widehat \bfx \in \argmin_\bfx \ \thf \norm[2]{\bfA\bfx -\bfb}^2 + \mu \norm[1]{\bfD\bfx},
\end{equation}
where $\norm[p]{\mdot}$ denotes the $\ell^p$-norm, $\bfD\in \bbR^{\ell \times n}$ is a predefined matrix and $\mu>0$ is the regularization parameter that weights the relevant contributions of the $\ell^2$-data loss and the $\ell^1$-regularizer. This optimization problem has gained major attention in particular in the early 2000's due to its connection to compressed sensing and the reconstruction of solutions $\widehat\bfx$ with sparsity properties in the range of $\bfD$ \cite{candes2005decoding}. The $\ell^1$-regularization in \cref{eq:general_lasso} is also referred to as (generalized, if $\bfD\ne \bfI_n)$) \emph{least absolute shrinkage and selection operator} (lasso) regression or \emph{basis pursuit denoising} (BPDN), and has major use in various applications such as signal compression, denoising, deblurring, and dictionary learning~\cite{tibshirani1996regression, TibshiraniTaylor:2011}.

While a necessary condition for the existence of a unique solution of \cref{eq:general_lasso} is given by $\mfN(\bfA) \cap \mfN(\bfD) = \{\bf0\}$ ($\mfN({\bfA})$ denoting the null space of the matrix $\bfA$) \cite{tibshirani2017sparsity}, a unique solution is ensured when $\rank{\bfA} = n$, because \cref{eq:general_lasso} is strictly convex.  Note that \cref{eq:general_lasso} also has a Bayesian interpretation, where $\widehat\bfx$ represents the maximum a-posteriori (MAP) estimate of a posterior with linear model, Gaussian likelihood and a particular Laplace prior~\cite{calvetti2007introduction}.

The variational regularization problem \cref{eq:general_lasso} is solved via an optimization algorithm, and as such it is computationally challenging to find the solution, particularly in the large-scale setting.
The introduction of methods including the Alternating Direction Method of Multipliers (ADMM), Split Bregman, and the Fast Iterative Shrinkage-Thresholding Algorithm \cite{goldstein2014fast,getreuer,parikh2014proximal}, have, however, made it computationally feasible to solve large-scale problems described by \cref{eq:general_lasso}, see for example \cite{esser2009applications,fukushima_1992,goldstein2009split,beck_teboulle_2009}. Nevertheless, compared to $\ell^2$-regularized linear least-squares (Tikhonov)
\begin{equation}\label{eq:Tik}
    \min_\bfx \ \thf \norm[2]{\bfA\bfx -\bfb}^2 + \tfrac{\mu^2}{2} \norm[2]{\bfD\bfx}^2,
\end{equation}
also referred to as \emph{ridge regression}, solving $\ell^1$-regularization remains significantly more computationally expensive because it requires nonlinear optimization methods to solve \cite{tibshirani1996regression}.

Another challenge for inverse problems is the selection of an appropriate regularization parameter $\mu$. Within real-world applications, the regularization parameter $\mu$ remains a priori unknown. Various methods have been proposed for finding efficient estimators of $\mu$ for $\ell^2$-regularization \cite{hansen2010discrete,bardsley2018computational}. Generally, the determination of such $\mu$ is posed as finding the root,  or the minimum, of a function that depends on the solution of  \cref{eq:Tik}. Evaluation of the specific function relies on the solver for finding \cref{eq:Tik} but can generally be designed efficiently for $\ell^2$-regularization problems, see same references as above. In contrast, regularization parameter selection methods for $\ell^1$-regularization are generally more computationally demanding, because the methods require a full optimization solve of \cref{eq:general_lasso} for each choice of $\mu$. Dependent on whether the underlying noise distribution is known, a few approaches have been proposed for defining an optimal choice of $\mu$, including the use of the discrepancy principle (DP), computationally expensive cross-validation, and supervised learning techniques, \cite{osher2005iterative,afkham2021learning}. In the statistical community, arguments based on the \emph{degrees of freedom} (DF) in the obtained solutions are prevalent for finding $\mu$, e.g., \cite{TibshiraniTaylor,Mead_2020}.

Here, we present a new variable projected augmented Lagrangian (VPAL) method for solving \cref{eq:general_lasso}. There are two main contributions of this work. (1) We introduce and analyze the VPAL method. We provide numerical evidence of the low computational complexity of the corresponding {\tt vpal} algorithm. Thus, this algorithm is efficient to run for a selection of regularization parameters. (2) In the context of image deblurring, {\tt vpal} is augmented with an automated algorithm for selecting the regularization parameter $\mu$. This uses an efficient implementation of a $\chi^2$ test based on an optimal \emph{degrees of freedom} (DF) argument for generalized lasso problems, as applied to problems for image restoration, but here is also applied successfully for tomography and projection problems. Our findings are corroborated with numerical experiments on various imaging applications.

This work is organized as follows. In~\cref{sec:background} we introduce further notation and provide background on $\ell^1$-regularization methods. A discussion on our regularization parameter selection approach is given in~\cref{sec:reg}. We present our numerical method in~\cref{sec:vpal} and discuss convergence results followed by various numerical investigations in~\cref{sec:numerics}. We conclude our work with a discussion in~\cref{sec:discussion}.

\section{Background}\label{sec:background}
We briefly elaborate on approaches that have been used to solve generic separable optimization problems in \cref{sec:separable} and on approaches specifically for \cref{eq:general_lasso} in \cref{sec:ADMM}. Our discussion leads to the description of a standard algorithm for such problems in \cref{alg:admm}.

\subsection{Separable optimization problems}\label{sec:separable}
Consider a generic nonlinear but \emph{separable} optimization problem
\begin{equation}\label{eq:genopt}
    \min_{(\bfx, \bfy) \in \bbR^{n+\ell}} \ f(\bfx,\bfy),
\end{equation}
where separability refers to functions $f$ where the arguments can be decomposed into two sets of independent variables $\bfx\in\bbR^n$ and $\bfy\in\bbR^\ell$. We assume, for simplicity, that $f$ is strictly convex, has compact lower level sets, and is sufficiently differentiable to ensure convergence to a unique solution $(\widehat \bfx,\widehat \bfy)$ \cite{stefanov2001separable}. To address the solution of \cref{eq:genopt} there are various numerical optimization techniques, which may be classified into \emph{four} main approaches.\\

If we are \emph{not} taking advantage of the separable structure of the arguments $\bfx$ and $\bfy$ of $f$ by setting $\bfz = (\bfz,\bfy)$ and $\min_\bfz \, f(\bfz)$, standard optimization methods such as gradient-based, quasi-Newton, and Newton-type methods can be employed to jointly optimize for $\bfx$ and $\bfy$,~\cite{nocedal2006numerical}, i.e., $\bfz_{k+1} = \bfz_k + \alpha_k \bfs_\bfz(f,\bfz_k)$ with appropriate descent direction $\bfs_\bfz$ and step size $\alpha_k$.  Thus the numerical approach may not benefit from any inherent separability.\\

An \emph{alternating direction} approach may leverage the structure of a separable problem. In this setting the numerical solution of~\cref{eq:genopt} is found by repeatedly alternating the optimization with respect to one of the variables, while keeping the other fixed. Specifically, given an initial guess $\bfy_0 \in \bbR^\ell$, the \emph{alternating direction} method generates iterates
\begin{subequations}
    \begin{align}\label{eq:ad}
        \bfx_{k+1} &= \argmin_{\bfx\in \bbR^{n}} \ f(\bfx,\bfy_k) \\
        \bfy_{k+1} &= \argmin_{\bfy\in \bbR^{\ell}} \ f(\bfx_{k+1},\bfy)
    \end{align}
\end{subequations}
until convergence is achieved. Since each optimization problem is handled separately, specifically tailored numerical schemes may be independently utilized for optimization with respect to each $\bfx$ and $\bfy$, \cite{boyd2011distributed,goldstein2009split}.
Various constrained optimization problems can be approximated by unconstrained optimization problems through  introducing  additional (slack) variables. This circumvents the challenge of using computationally expensive constrained optimization solvers and leverages the benefits of applying alternating direction methods for the primary and slack variables \cite{nocedal2006numerical}.

On the other hand, an alternating direction optimization may lead to artificially introduced computational inefficiencies, such as zig-zagging.\\

Another perspective that yields an alternating approach, referred to as \emph{block coordinate descent}, arises by taking an optimization \emph{step} in each variable direction $\bfx$ and $\bfy$ while alternating over those directions. In this case the iterates, initialized with arbitrary $\bfx_0,\bfy_0$, proceed until convergence via the alternating steps
\begin{subequations}
    \begin{align}\label{eq:bcd}
        \bfx_{k+1} &= \bfx_k + \alpha_k\bfs_\bfx(f,\bfx_{k},\bfy_k)\\
        \bfy_{k+1} &= \bfy_k + \beta_k\bfs_\bfy(f,\bfx_{k+1},\bfy_k).\label{eq:cdy}
    \end{align}
\end{subequations}
Here, $\bfs_\bfx$ and $\bfs_\bfy$ refer to appropriate descent directions with corresponding step sizes $\alpha_k$ and $\beta_k$. Within each iteration $k$, the function $f$ is kept constant with respect to one of the variables while the other performs for instance a gradient descent or Hessian step with appropriate step size control~\cite{wright2015coordinate}. While this approach may leverage computational advances, in particular where mixed derivative information is hard to compute, the convergence may again be slow, for many of the same reasons as are seen with algorithms based on alternating directions.\\

Finally, consider the less-utilized \emph{variable projection} technique, which is a hybrid of the previous two approaches \cite{golub1973differentiation, o2013variable}. At each iteration of this numerical optimization scheme, we first optimize over $\bfx$ while keeping $\bfy$ constant, and then perform a descent step with respect to $\bfy$, while keeping $\bfx$ constant. Specifically, again taking $\bfy_0$ to be an arbitrary initial guess, we iterate to convergence via
\begin{subequations}\label{eq:varpro}
    \begin{align}
        \bfx_{k+1} &= \argmin_{\bfx\in \bbR^{n}}  \ f(\bfx,\bfy_k) \\
        \bfy_{k+1} &= \bfy_k + \alpha_k\bfs_\bfy(f,\bfx_{k+1},\bfy_k).
    \end{align}
\end{subequations}
The iterations defined by \cref{eq:varpro} are equivalent to the single iterative step given by
\begin{equation}\label{eq:singlevarpro}
\bfy_{k+1} = \bfy_k + \alpha_k\bfs_\bfy(f,\argmin_{\bfx\in \bbR^{n}}  \ f(\bfx,\bfy_k),\bfy_k),
\end{equation}
from which it is apparent that the key idea behind variable projection is elimination of one set of variables $\bfx$ by  projecting the optimization problem onto a reduced subspace associated with the other set of variables $\bfy$ \cite{golub2003separable,sjoberg1997separable}. Variable projection approaches were specifically developed for \emph{separable nonlinear least-squares} problems, such as  $\min_{(\bfx,\bfy)\in\bbR^{n+\ell}} \ \thf\|\bfA(\bfy)\bfx-\bfb\|_2^2$ where for fixed $\bfy$ the optimization problem exhibits a \emph{linear} least-squares problem in $\bfx$ which may easily be solved. This approach has been successfully applied beyond standard settings, for example, in super-resolution applications, for separable deep neural networks, and within stochastic approximation frameworks, see~\cite{chung2019iterative, newman2020train,newman2021slimtrain} for details. Variable projection methods show their full potential when one of the variables can be efficiently eliminated.\bigskip

\subsection{Methods to solve generalized lasso problems}\label{sec:ADMM}
For the purposes of deriving our new algorithm for the solution of~\cref{eq:general_lasso}, we need to recast the problem in the context of the ADMM algorithm, as first introduced in \cite{glowinski1975approximation} and \cite{gabay1976dual}, and for which extensive details are given in \cite{esser2009applications}.
We first introduce a variable $\bfy\in \bbR^{\ell}$ with $\bfy = \bfD\bfx$. Then problem \cref{eq:general_lasso} is equivalent to
\begin{equation}\label{eq:general_lassosplit}
    \min_{\bfx,\bfy} \ f(\bfx,\bfy) =  \thf \norm[2]{\bfA\bfx -\bfb}^2 +\mu \norm[1]{\bfy} \quad \mbox{subject to }  \bfD\bfx -\bfy = {\bf0}.
\end{equation}
Transforming from unconstrained \cref{eq:general_lasso} to  constrained problem \cref{eq:general_lassosplit} may at first seem counterintuitive, as the dimension of the problem increases from $\bbR^n$ to $\bbR^{n+\ell}$. The advantage, however, lies in the ``simplicity'' of the subsequent subproblems arising from the particular form of $f$, when this constrained problem is solved utilizing common numerical approaches for constrained optimization.

We now follow the well-established \emph{augmented Lagrangian} framework, see~\cite{hestenes1969multiplier, powell1969method} and~\cite[Chapter 17]{nocedal2006numerical} for details, in which the Lagrangian function of the constrained optimization problem~\cref{eq:general_lassosplit} is accompanied by a quadratic penalty term:
\begin{equation}\label{eq:general_lassoaug}
    \calL_{\rm aug}(\bfx,\bfy,\bfz; \lambda) =   f(\bfx,\bfy) + \bfz\t (\bfD\bfx -\bfy) + \tfrac{\lambda^2}{2} \norm[2]{\bfD\bfx -\bfy}^2,
\end{equation}
where $\bfz\in \bbR^\ell$ denotes the vector of Lagrange multipliers and  $\lambda$ is a scalar penalty parameter. Merging the last two quadratic terms in \cref{eq:general_lassoaug} gives
\begin{equation}\label{eq:augL}
    \calL_{\rm aug}(\bfx,\bfy,\bfc; \lambda) =  f(\bfx,\bfy) + \tfrac{\lambda^2}{2} \norm[2]{\bfD\bfx -\bfy + \bfc}^2 -\tfrac{\lambda^2}{2}\norm[2]{\bfc}^2,
\end{equation}
where $\bfc = \bfz/\lambda^2$ is a scaled Lagrangian multiplier.
Now, for fixed $\bfc$, \cref{eq:augL} can be solved using an \emph{alternating direction} approach to minimize the augmented Lagrangian $\calL_{\rm aug}(\bfx,\bfy,\bfc; \lambda)$ with respect to $\bfx$ and $\bfy$. Algorithmically, the minimizers $\bfx_{k},\bfy_{k}$ can be used to warm-start the numerical optimization method for iteration $k+1$. Further, using a sequence of increasing $\lambda_{k}$ reduces and ultimately eliminates the violation of the equality constraint $\bfD\bfx_k- \bfy_k = {\bf0}$ for sufficiently large $\lambda_k$. While adopting an increasing sequence of $\lambda_k$ leads to an ill-conditioned problem that suffers from numerical instability \cite{wang2019global,gabay1976dual}, it has been shown that, within the augmented Lagrangian framework, it is sufficient and computationally advantageous to explicitly estimate the scaled Lagrange multipliers $\bfc_k$, assuming constant $\lambda_k = \lambda$~\cite{bertsekas2014constrained}.

From the first-order optimality condition, we expect
\begin{equation}
    \nabla_{\bfx,\bfy} \ \calL_{\rm aug}(\bfx_{k+1},\bfy_{k+1},\bfc_k; \lambda)\approx {\bf0},
\end{equation}
whenever $\bfx_{k+1},\bfy_{k+1}$ approximately minimizes  $\calL_{\rm aug}(\,\cdot\,,\,\cdot\,,\bfc_k; \lambda)$. But noting that
\begin{equation}
\nabla_{\bfx,\bfy} \ \calL_{\rm aug}(\bfx,\bfy,\bfc_k; \lambda) =  \nabla_{\bfx,\bfy} f(\bfx,\bfy) +
\lambda^2\begin{bmatrix}
      \bfD\t\\
      -\bfI_\ell
\end{bmatrix} (\bfD\bfx -\bfy + \bfc_k),
\end{equation}
we see that $\bfD\bfx_{k+1} -\bfy_{k+1} + \bfc_k$ approximates the corresponding Lagrange multipliers. Hence  we update
\begin{equation}\label{eq:cupdate}
    \bfc_{k+1} = \bfD\bfx_{k+1} -\bfy_{k+1} + \bfc_k,
\end{equation}
and the \emph{method of multipliers} can be summarized as follows \cite{hestenes1969multiplier,powell1969method}. For initial $\bfc_0\in \bbR^\ell$ and a choice of $\lambda$, we iterate over
\begin{subequations}\label{eq:augLag}
\begin{align}
    (\bfx_{k+1}, \bfy_{k+1}) &= \argmin_{\bfx,\bfy} \calL_{\rm aug}(\bfx,\bfy,\bfc_k;\lambda) \label{eq:lagmin}\\
       \bfc_{k+1} &= \bfD\bfx_{k+1} -\bfy_{k+1} + \bfc_k
\end{align}
\end{subequations}
until convergence is achieved.

Certainly, the main computational effort associated with implementing~\cref{eq:augLag} for large-scale problems lies in solving \cref{eq:lagmin}. Hence, splitting the optimization problem \cref{eq:lagmin} with respect to $\bfx$ and $\bfy$ and optimizing with an \emph{alternating direction} approach offers the potential for reduced computational complexity. It is this final step that leads to the widely used \emph{alternating direction method of multipliers} (ADMM), where $\bfx_{k+1}$ and $\bfy_{k+1}$ in \cref{eq:lagmin} are approximately obtained by performing one alternating direction optimization
\begin{subequations}\label{eq:admm}
\begin{align}\label{eq:admm_x}
    \bfx_{k+1} &= \argmin_{\bfx} \ \calL_{\rm aug}(\bfx,\bfy_{k},\bfc_k;\lambda) \\ \label{eq:admm_y}
    \bfy_{k+1} &= \argmin_{\bfy} \ \calL_{\rm aug}(\bfx_{k+1},\bfy,\bfc_k;\lambda).
\end{align}
\end{subequations}
The computational advantages  of the alternating direction method becomes apparent with a closer look at \cref{eq:admm_x} and \cref{eq:admm_y} and considering the specific form of \cref{eq:general_lassosplit}. As for \cref{eq:admm_x}, we notice (ignoring constant terms) that minimizing $\calL_{\rm aug}(\bfx,\bfy_{k},\bfc_k;\lambda)$ with respect to $\bfx$ reduces to a linear least-squares problem of the form
\begin{equation}\label{eq:admm_min_x}
    \bfx_{k+1} = \argmin_\bfx \ \thf \norm[2]{
    \begin{bmatrix}
    \bfA \\ \lambda \bfD
    \end{bmatrix} \bfx -
    \begin{bmatrix}
    \bfb \\ \lambda\left(\bfy_{k} - \bfc_k\right)
    \end{bmatrix}
    }^2.
\end{equation}
There are many efficient methods to solve this least-squares problem either directly or iteratively \cite{greenbaum1997iterative}. To obtain $\bfy_{k+1}$ in \cref{eq:admm_y}, we notice that the minimization of the objective function  $\calL_{\rm aug}(\bfx_{k+1},\bfy,\bfc_k;\lambda)$ with respect to $\bfy$ for the particular choice of $f$ in \cref{eq:general_lassosplit} reduces to
\begin{equation} \label{eq:shrink}
    \bfy_{k+1} = \argmin_\bfy \  \mu \norm[1]{\bfy}+ \tfrac{\lambda^2}{2} \norm[2]{\bfd_{k+1} -\bfy}^2,
\end{equation}
where $\bfd_{k+1} = \bfD\bfx_{k+1} + \bfc_k$. \Cref{eq:shrink} is a well-known \emph{shrinkage} problem that has the explicit solution
\begin{equation}\label{eq:vecshrink}
    \bfy_{k+1} = \sign{\bfd_{k+1}} \hadamard \left(\left|\bfd_{k+1}\right| -  \frac{\mu}{\lambda^2}\bf1_{\ell} \right)_+,
\end{equation}
for the element-wise function $(\mdot)_+$ defined by
\begin{equation}
    (w)_+ =
    \begin{cases}
        w, & \mbox{for }w>0,\\
        0, & \mbox{otherwise,}
    \end{cases}
\end{equation}
where $\hadamard$ denotes the Hadamard product, and $|\,\cdot\,|$ the element-wise absolute value. The update \cref{eq:vecshrink} is of low computational complexity, making the alternating direction optimization efficient. Combining \cref{eq:admm_min_x,eq:vecshrink} with \cref{eq:cupdate}, yields the ADMM algorithm for the solution of \cref{eq:general_lasso} as summarized in \cref{alg:admm}.

\begin{algorithm}[htb!]
\caption{Alternating Direction Method of Multipliers (ADMM) \cite{gabay1976dual}}\label{alg:admm}
    \begin{algorithmic}[1]
    \State \textbf{input} $\bfA$, $\bfb$, $\bfD$, $\mu$, and $\lambda$
    \State initialize $\bfc_0=\bfx_0=\bfy_0={\bf0}$, and set $k = 0$
    \While{not converged}
        \State\label{ln:admm_xk} $\bfx_{k+1} =\argmin_{\bfx} \ \ \thf\norm[2]{\bfA\bfx -\bfb}^2 + \tfrac{\lambda^2}{2} \norm[2]{\bfD\bfx - \bfy_k+\bfc_{k}}^2$
        \State\label{ln:admm_yk} $\bfy_{k+1} =\argmin_{\bfy} \ \ \mu\norm[1]{\bfy} + \tfrac{\lambda^2}{2} \norm[2]{\bfD\bfx_{k+1} - \bfy + \bfc_{k}}^2$
        \State\label{ln:admm_ck} $\bfc_{k+1} = \bfc_k + \bfD\bfx_{k+1} - \bfy_{k+1}$
        \State\label{ln:kupdate} $k = k+1$
    \EndWhile
    \State \textbf{output} $\bfx_k$
    \end{algorithmic}
\end{algorithm}

The computational complexity of a \emph{plain} ADMM algorithm, as described in \cref{alg:admm}, is $\calO(\widetilde K (m+\ell)n^2)$, where $\widetilde K$ refers to the required number of iterations. While various iterative techniques for the solution of \cref{eq:admm_min_x} have been discussed in the literature, including a generalized Krylov approach, e.g., \cite{BucciniADMM}, a standard approach is to use the LSQR algorithm, also based on a standard Krylov iteration \cite{paige1982lsqr}. This is our method of choice for comparison with our new projected algorithm, described in~\cref{sec:vpal}. Stopping criteria for ADMM are outlined in \cite{boyd2011distributed}.

Note that at each alternating direction optimization \cref{eq:admm} the matrices $\bfA$ and $\bfD$ in \cref{eq:admm_min_x} do not change. Hence, computational efficiency for ADMM, with fixed $\lambda$, can be improved by pre-computing a matrix factorization, e.g., a Cholesky decomposition (as implemented in Matlab's {\tt lasso} function) of $\bfA\t\bfA + \lambda^2\bfD\t\bfD$ and using this factorization to solve the optimization problem \cref{eq:admm_min_x} . This initial computational effort is especially effective, making the update of $\bfx$ computationally efficient. The theoretical computational cost reduces to about $\calO(\widetilde K (m+l)n)$. However, the use of a precomputed factorization may come at a price. For instance, for large-scale problems as they appear in feature selection, sparse representation, machine learning, and dictionary learning, as well as total variation (TV) problems \cite{brunton2019data}, it is not immediate that a precomputed matrix decomposition can be performed. In particular, this may not be feasible when the matrices $\bfA$, $\bfD$ are too large ($m$, $n$, or $\ell$ large) or are only available as linear mappings, e.g., $\bfA:\bbR^n \to \bbR^m$. Another drawback of finding solutions of equations with the system matrix $\bfA\t\bfA + \lambda^2\bfD\t\bfD$ is that the ill-posedness of the problem can be accentuated due to the squaring of the condition number as compared to that of $\bfA$ alone, when $\lambda$ is not chosen appropriately. Before presenting an alternative approach based on a variable projection in \cref{sec:vpal} we turn to a discussion on regularization parameter selection methods in \cref{sec:reg}.

\section{Regularization parameter selection}\label{sec:reg}
We briefly overview standard methods to find regularization parameters in \cref{sec:regbackground}. Approaches in the context of image restoration for the generalized lasso formulation are presented in \cref{sec:meadoptmu}, and a  discussion of applying a bisection algorithm for this problem is provided in \cref{sec:bisect}.

\subsection{Background}\label{sec:regbackground}
There is a substantive literature on determining the regularization parameter $\mu$ in \cref{eq:Tik}, for which details are available in texts such as \cite{hansen1998rank,hansen2010discrete,bardsley2018computational}. These range from techniques that do not need any information about the statistical distribution of the noise in the data, such as the L-curve that trades off between the data fit and the regularizer \cite{hansen2010discrete}, the method of generalized cross-validation \cite{GoHeWa} and supervised learning techniques \cite{osher2005iterative,afkham2021learning}.  Some other approaches are  statistically-based and require that an estimate of the variance  of the noise in the data, assumed to be normal, is known. These yield techniques such  as the Morozov Discrepancy Principle (MDP) and Unbiased Predicative Risk Estimation, \cite{Morozov1966,Vogel:2002}. The MDP is based on an assumption that the optimal choice for $\mu$ yields a residual which follows a $\chi^2$ distribution with $m$ degrees of freedom (DF). It has also been shown that another choice for $\mu$ is the one for which the augmented residual follows a $\chi^2$ distribution but with a change in the DF. In this case the DF depends on the relative sizes of $\bfA$ and $\bfD$, and their ranks, for details see \cite{mead2008newton,RHM10}.

While it is generally computationally feasible to find optimal estimates for $\mu$ for the ridge regression problem, the lack of convexity of the generalized lasso, and the associated computational demands for large scale problems \cref{eq:general_lasso} presents challenges in both defining a method for optimally selecting $\mu$ and with finding an optimal estimate efficiently. On the other hand, techniques that use arguments based on the solution's \emph{degrees of freedom}, rather than a residual DF, have paved a way to identifying an optimal $\mu$ for the lasso problem, $\bfD = \bfI_n$, as discussed in the statistical literature,  \cite{Efron2,Efronetal,TibshiraniTaylor:2011,TibshiraniTaylor,dittmer2020regularization,Zouetal}. A DF argument for the solution of the generalized lasso problem when $\bfA$ is a blurring operator of an image and $\bfD\ne \bfI_n$ was also recently provided by Mead, \cite{Mead_2020}. Given an efficient algorithm to solve the generalized lasso problem \cref{eq:general_lasso}, it becomes computationally feasible to find an optimal $\mu$ provided the conditions under which the DF arguments apply are satisfied. Here, we focus on implementing an efficient estimator for $\mu$ for the generalized lasso problem, equipped with our efficient {\tt vpal} solver.

\subsection{The optimal \texorpdfstring{$\mu$ }{mu }for total variation image deblurring}\label{sec:meadoptmu}

As observed in \cite{Green:02}, for naturally occurring images, the TV functional defined by $\bfy = \bfD\bfx$ is Laplace distributed\footnote{A random variable $y$ follows a  Laplace  distribution with mean $\theta$ and variance $2\beta^2$, denoted $y \sim \calL(\theta, 2\beta^2 )$, if its probability density function is
$y = \tfrac{1}{2\beta} \exp\left(\tfrac{|y-\theta|}{\beta}\right)$.}~with mean $\bftheta\in \bbR^\ell$ and variance $2\beta^2\bfI_\ell$, i.e., $\bfy \sim \calL(\bftheta, 2\beta^2 \bfI_\ell)$;  the underlying image  $\bfx$ is said to be  \emph{differentially Laplacian}.  Then, under the assumption $\bfvarepsilon\sim\calN({\bf0}, \sigma^2\bfI_m)$, the maximum a posteriori estimator (MAP) for $\widehat \bfx(\mu)$, here denoted by $\widehat\bfx(\mu_{\textrm{map}})$, is given by \cref{eq:general_lasso} when $\mu=\mumap=\sigma^2/\beta $ \cite[eq.~(9)]{Mead_2020}. This result relies on first determining that the image pixels are differentially Laplacian, and then in estimating a value for $\beta$, in both cases using $\bfb$ since $\bfx$ is not known. The statistical  distribution can be tested by applying $\bfD$ to a given image and forming its histogram, as suggested in \cite{Green:02}. Moreover, $\beta$ can be estimated using $\beta=\mathrm{std}(\bfD \bfb)/\sqrt{2}$ (or from a suitable subset of the image, or adjustment of $\bfD$ to the size of the image, when the dimensions are not consistent) \cite[Algorithm~1]{mead2008newton}, where $\rm std$ denotes the standard deviation. The difficulty with using $\mumap$  is that one  does need to find $\beta$ from the data, and even for naturally occurring images, it may not be effective to find $\beta$ using $\bfb$. Specifically,  for images that are significantly blurred, it is unlikely  that $\beta$ obtained from $\bfb$ is a good estimate for the true $\beta$ associated with the unknown image $\bfx$.

With the observed limitation in identifying $\beta$ from any given image,  \cite{Mead_2020} suggested the alternative direction that uses the DF argument of the solution. Briefly, and referring to \cite{Mead_2020} for more details, the argument relies on the connection between  independent Laplacian random variables and the $\chi^2$ distribution. Specifically, for $\bfy \sim \mathcal{L}({\bf 0}, 2\beta^2 \bfI_\ell)$, we have  $\tfrac{2}{\beta}\|\bfD \bfx\|_1 \sim \chi^2_{2\ell}$\footnote{We use $\chi^2_m$ to denote a sum that follows a $\chi^2$ distribution with $m$ degrees of freedom} \cite[Proposition 2 to 4]{Mead_2020}. While we know $\norm[2]{\bfA \bfx -\bfb}^2 \sim \chi^2_m$, the two terms in \cref{eq:general_lasso} are not independent.  To use the $\chi^2$ distributions of both terms together requires the result on the DF from  \cite{TibshiraniTaylor}, which then yields
\begin{align}\label{eq:muchi2}
 \tfrac{1}{\sigma^2}\norm[2]{\bfA\bfx -\bfb}^2 + \tfrac{2}{\beta} \norm[1]{\bfD\bfx} \sim \chi^2_m,
\end{align}
\cite[Theorem 4]{Mead_2020}. This still uses $\beta$, but given $\sigma^2$ from the data, and without calculating $\beta$, \cref{eq:muchi2} suggests that $\mu$ can be chosen based on the $\chi^2$ test that fits the $\chi^2$ degrees of freedom
\begin{equation}\label{eq:dofmead}
\norm[2]{\bfA \widehat\bfx(\mu) -\bfb}^2 + \mu \|\bfD \widehat\bfx(\mu)\|_1 \cong  m \sigma^2,
\end{equation}
when both $\bfA$ and $\bfD$ have full column rank.  We define $\muchi$ to be the $\mu$ that satisfies \cref{eq:dofmead}.

While the results presented in \cite{Mead_2020} demonstrate that use of the TV-DF estimator in \cref{eq:dofmead} is more effective than use of $\mumap$, \cite[Algorithm~2]{Mead_2020}, as we have also confirmed from results not presented here, it must be noted that we cannot immediately apply this estimator for matrices $\bfA$ that arise in other models, such as projection, i.e., $\bfA$ is not a smoothing operator. On the other hand, for the restoration of blurred images, a bisection algorithm can be used to find $\muchi$ using \cref{eq:dofmead}.
Here we give only the necessary details, but note that the  approach provided is feasible when the covariance of the noise in the data is available. We assume that the noise is Gaussian normal. For colored noise a suitable whitening transform can be applied to modify \cref{eq:general_lasso}, effectively, using a weighted norm in the data fit term, see e.g., \cite{mead2008newton}. We reiterate that the approach is directly applicable only for the restoration of naturally occurring images, \cite{Green:02}.

\subsection{A bisection algorithm to find \texorpdfstring{$\mu$}{mu}}\label{sec:bisect}
Although the importance of \cref{eq:dofmead} was presented in \cite{Mead_2020}, no efficient algorithm to find $\muchi$ was given. Defining
\begin{equation}\label{eq:dof}
F(\mu)=\norm[2]{\bfA \widehat\bfx(\mu) -\bfb}^2 + \mu \norm[1]{\bfD \widehat\bfx(\mu)}
\end{equation}
implies $F(\mu)>0$. But, under the assumption that $\norm[1]{\bfD \widehat \bfx(\mu)}$ is bounded, as $\mu\rightarrow 0$ the function $F(\mu)$ decreases, see \cite[Proposition~4]{Mead_2020}.  Consequently, for any choice of $p>0$,  $F(\mu)-p\sigma^2$ decreases, and may become negative if $p\sigma^2$ is sufficiently large.  When the estimates  for $p$ and $\sigma^2$ are accurate we can expect, for sufficiently large $\mu$,  that $F(\mu)-p\sigma^2>0$. Noting now, given an algorithm to find solutions $\widehat\bfx(\mu)$, that we can also calculate $F(\mu)$ as given by \cref{eq:dof}, then we can also seek $\muchi$  either as the root of $F(\mu)-p\sigma^2=0$ or by minimization of $\left| F(\mu)-p\sigma^2\right|$.

Here we adapt a standard root-finding algorithm by bisection to find $\muchi$,  assuming that the estimates for both DF and $\sigma^2$ are accurate, and that $\norm[1]{\bfD \widehat \bfx(\mu)}$ is bounded. Given initial estimates for $\mumin$ and $\mumax$ such that $F(\mumin)-p\sigma^2 < 0 < F(\mumax) -p\sigma^2 $, we may apply bisection to find $0< \mumin<\muchi<\mumax$.  In general we use $\mumax=2\|\bfA\t\bfb\|_{\infty}$. This is now where an estimate of $\mumap$ becomes helpful, even when it may be generally inadequate as the actual estimator for a good choice of $\mu$, it can be used to suggest an interval in which a suitable  $\muchi$ exists. Thus given   $\mumap$ we can define an interval for $\muchi$ by using $\mumin=10^{-q}\mumap$ and $\mumax=10^q\mumap$ for a suitably chosen $q$ such that $F(\mumin)-p\sigma^2 <0< F(\mumax)-p\sigma^2 $.  In our bisection algorithm we utilize a logarithmic bisection.  Empirically we observe that this is more efficient for getting close to a suitable root, given that the range for suitable $\mu$ may be large.  We further note, as observed in \cite[Remarks~1 and 2]{Mead_2020}, that the bisection relies not only on finding a good interval for $\mu$ such that $F(\mu)-p\sigma^2$ goes through zero, but also, as already stated, on the availability of good estimates for $p$ and $\sigma^2$.
These limitations (i.e., providing statistics on the data and an estimate of the degrees of freedom) are the same as the ones arising when using a DF argument in the standard MDP, or for the augmented residual in \cref{eq:Tik}, \cite{Morozov1966,mead2008newton}.

Bisection is terminated when one of the following conditions is satisfied: (i) $\mumax-\mumin < \tau_2(1+|\mumin|)$;  (ii) $|\widetilde{F}(\mumax)-\widetilde{F}(\mumin)|<\tau_2$; (iii) $\mumax-\mumin< \tau_1$ or (iv) a maximum number of total function evaluations are reached.  First, note that the limit on function evaluations is important because each evaluation requires finding $\widehat\bfx(\mu)$, i.e., solving \cref{eq:general_lasso} for a given choice of $\mu$. Second, the estimate  $\tau_1$ determines whether or not a tight estimate for $\muchi$ is required.

It has been shown in \cite{Efronetal} that the family of solutions for \cref{eq:general_lasso} is piecewise linear, namely the regularization path for solutions as $\mu$ varies has a piecewise linear property in $\mu$. Indeed, path-following on $\mu$ has been used to analyze the properties of the solution with $\mu$ and to design algorithms that determine the intervals \cite{ali2019generalized,Dossaletal:13,Zouetal,TibshiraniTaylor:2011,4407767}. Here, we do not propose to apply any path-following approach, rather we just rely on the existence of the intervals, to assert that refining the interval $\mu$ will have little impact on the solution.

The parameter $\tau_2$, in contrast, provides a relative error bound on $\mu$ that is relevant for large $\mu$. It also  permits adjustment of a standard bisection algorithm to reflect the confidence in the knowledge of $p$ and/or $\sigma^2$. For example, the MDP is a $\chi^2$ test on the satisfaction of the DF~in the data fit term for Tikhonov regularization, and is often adjusted by the introduction of a safety parameter $\eta$. Then, rather than seeking $\norm[2]{\bfA\bfx_{\mathrm{tik}}-\bfb}^2 = m\sigma^2$, the right-hand side is adjusted to $\eta m \sigma^2$ (Here $\bfx_{\mathrm{tik}}$ is the solution of \cref{eq:Tik} dependent on the parameter $\mu$).  In the same manner, we may adjust $\tau_2$ using the safety parameter $\eta$  as a degree of confidence on the variance $\sigma^2$ or the DF.  Since we solve relative to $p \sigma^2$, we can adjust using  $\eta p\sigma^2$ for safety on $\sigma^2$ or using $\eta=(p+\zeta)/p$ when considering a confidence interval on the DF. For example,  replacing $p$ by $p+ \zeta$ for $\zeta=\textrm{confidence}(p,0.05)$ represents a $95\%$ confidence interval in the $\chi^2$ distribution, as used when applying the $\chi^2$ estimate for the augmented residual \cite{mead2008newton}. A choice based on a confidence interval is more specific than an arbitrarily chosen  $\eta$.

It should also be noted that the solution $\widehat\bfx(\mu)$ is defined within the algorithm for a given choice of the shrinkage parameter $\gamma$, which should not be changed during the bisection. This means that as $\mu$ increases, with soft shrinkage parameter $\gamma=\mu/\lambda^2$ fixed, $\lambda$ also increases, and hence the result for a specific $\muchi$ is dependent on a given shrinkage threshold.

\section{Variable Projected Augmented Lagrangian}\label{sec:vpal}

As discussed in~\cref{sec:ADMM} the computational efficiency of~\cref{alg:admm} is dominated by the effort of repeatedly solving the least-squares problem \cref{eq:admm_min_x} at step~\ref{ln:admm_xk} of \cref{alg:admm}. The focus, therefore, of our approach is to  significantly reduce the computational cost that arises in obtaining the updates in~\cref{eq:lagmin}. Specifically, for the solution of \cref{eq:general_lasso}, we propose to utilize the augmented Lagrangian approach as derived in \cref{eq:general_lassosplit}--\cref{eq:augLag}.  However, rather than  performing an alternating direction optimization as in \cref{eq:admm}, we adopt \emph{variable projection} techniques as presented in \cref{eq:varpro} to solve \cref{eq:lagmin}.

Calculating $\bfy$ exactly while keeping $\bfx$ constant is computationally tractable due to the use of the shrinkage step~\cref{eq:vecshrink}. On the other hand, gains in computational efficiency can be achieved by exploiting an inexact solve for $\bfx$. Consequently, we propose a variable projection approach with an \emph{inexact} solve to update $\bfx$ while optimizing for $\bfy$. This new perspective of utilizing variable projection resonates with inexact solves of the linear system within ADMM and other common methods \cite{goldstein2009split}.  While there are various efficient update strategies that may be utilized to find $\bfx$, such as LBFGS or Krylov subspace type methods \cite{nocedal2006numerical,paige1982lsqr}, we propose updating $\bfx$ by performing a single conjugate gradient (CG) step. Specifically, we use the update
\begin{equation}\label{eq:xupdate}
    \bfx^{(j+1)} = \bfx^{(j)} - \alpha_j \bfg_j,
\end{equation}
where $\bfg_j$ is a vector of length $n$ given by
\begin{equation}\label{eq:gupdate}
   \bfg_j = \begin{bmatrix}
   \bfA\t & \lambda\bfD\t\end{bmatrix} \bfr_j \qquad \mbox{with} \qquad \bfr_j = \begin{bmatrix} \bfA\bfx^{(j)} - \bfb\\ \lambda\bfD\bfx^{(j)} - \lambda \left(\bfy^{(j)}-\bfc_k\right)\end{bmatrix},
\end{equation}
and the optimal step length $\alpha_j$ may be computed by
\begin{equation}\label{eq:alphaupdate}
    \alpha_j = \argmin_\alpha \, f_{\rm proj}(\bfx^{(j)}-\alpha \bfg_j),
\end{equation}
where the projected function $f_{\rm proj}:\bbR^n\to\bbR$ is defined in \cref{eq:fproj} \cite{hestenes1952methods,paige1982lsqr}. Adopting this update strategy yields the new \emph{variable projected augmented Lagrangian} (VPAL) method for solving \cref{eq:general_lasso} as summarized in~\cref{alg:vpal}.

\begin{algorithm}[htb!]
\caption{Variable Projected Augmented Lagrangian (VPAL)}\label{alg:vpal}
    \begin{algorithmic}[1]
    \State \textbf{input} $\bfA$, $\bfb$, $\bfD$, $\mu$, $\lambda$
    \State initialize $\bfc_0=\bfx_0=\bfy_0={\bf0}$, and set $k = 0$
    \While{not converged}\label{ln:outer_start_vpal}
        \State set $j = 0$, $\bfx^{(0)} = \bfx_{k}$, $\bfy^{(0)} = \bfy_{k}$
        \While{not converged}\label{ln:inner_start_vpal}
            \State calculate residual $\bfr_j = \begin{bmatrix} \bfA\bfx^{(j)} - \bfb\\ \lambda\bfD\bfx^{(j)} - \lambda \left(\bfy^{(j)}-\bfc_k\right)\end{bmatrix}$,
            \State calculate direction $\bfg_j = \begin{bmatrix}
   \bfA\t & \lambda\bfD\t\end{bmatrix} \bfr_j$
            \State
            set $\alpha_j = \argmin_\alpha \, f_{\rm proj}(\bfx^{(j)}-\alpha \bfg_j)$
            \vspace*{0.5ex}
           \State update  $\bfx^{(j+1)} = \bfx^{(j)} - \alpha_j \bfg_j$\label{ln:inner_x_vpal}\vspace*{0.5ex}
            \State\label{ln:vpal_yk}update  $\bfy^{(j+1)} = \sign{\bfD\bfx^{(j+1)} + \bfc_k} \hadamard \left(\left|\bfD\bfx^{(j+1)} + \bfc_k\right| -  \mu/\lambda^2 \bf1_{\ell} \right)_+$
            \State$j = j+1$
        \EndWhile   \label{ln:inner_end_vpal}
        \State set $\bfx_{k+1} = \bfx^{(j)}$ and $\bfy_{k+1} = \bfy^{(j)}$
        \State set $\bfc_{k+1} = \bfc_k + \bfD\bfx_{k+1} - \bfy_{k+1}$
        \State $k = k+1$
    \EndWhile\label{ln:outer_end_vpal}
    \State \textbf{output} $\bfx_k$
    \end{algorithmic}
\end{algorithm}

\smallskip

{\sc Remarks.} Within VPAL our ansatz for applying a variable projection technique is ``\emph{inverted}'' as compared to the standard approaches. Specifically, a typical variable projection method would seek to optimize over the variable that determines the linear least squares problem, here it would be $\bfx$, and would update the variable occurring nonlinearly, here $\bfy$, using a standard gradient or Hessian update, \cite{golub1973differentiation,o2013variable}. Here, we reverse the roles of the variables.
The idea to use an inexact solve for the update step \cref{eq:admm_min_x} within an ADMM algorithm is not new. For instance using just a few CG steps, or using other iterative methods, has been proposed and analyzed \cite{gol1979modified, eckstein1992douglas, hager2019inexact, teng2016admm}. In contrast, VPAL uses the variable projection \cref{eq:vecshrink} at each CG step.
\smallskip

Our convergence result for \cref{alg:vpal} is summarized in \cref{thm:vpal}. This result relies on the well-established convergence analysis of ADMM and the augmented Lagrangian method for the sequence of solves for $\bfx_k$  \cite{boyd2011distributed,hestenes1952methods,powell1969method,bertsekas1995nonlinear,bertsekas2014constrained}. Thus, the focus of the proof is to show that the solves obtained via variable projection corresponding to the inner loop, steps \ref{ln:inner_start_vpal} to \ref{ln:inner_end_vpal} of \cref{alg:vpal}, are sufficient to solve \cref{eq:lagmin}. Note, since $\bfc$ and $\lambda$ in \cref{eq:lagmin} are considered constant, an equivalent objective function to $\calL_{\rm aug} (\bfx,\bfy,\bfc;\lambda)$ is given by $f_{\rm joint}:\bbR^n\times \bbR^\ell \to \bbR$ with
\begin{equation}
\begin{aligned}\label{eg:fjt}
    f_{\rm joint}(\bfx,\bfy) & = \thf \norm[2]{\bfA\bfx -\bfb}^2 + \tfrac{\lambda^2}{2} \norm[2]{\bfD\bfx - \bfy + \bfc}^2+\mu \norm[1]{\bfy}. \\
\end{aligned}
\end{equation}

\begin{lemma}\label{lem:jtunique}
Suppose that $\bfA$ has full column rank. Then, for arbitrary but fixed $\bfc$ and $\mu, \lambda>0$, $f_{\rm joint}(\bfx,\bfy)$ is strictly convex and has a unique minimizer $(\widehat\bfx,\widehat\bfy)$.
\end{lemma}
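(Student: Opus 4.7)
The plan is to split $f_{\rm joint}$ into its quadratic part and the $\ell^1$ term, establish strict convexity of the quadratic part using the full column rank hypothesis, then add back the convex $\ell^1$ penalty and invoke coercivity plus standard convex analysis to conclude existence and uniqueness.

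First I would write $f_{\rm joint}(\bfx,\bfy) = Q(\bfx,\bfy) + \mu\|\bfy\|_1$, where $Q$ is the purely quadratic part. Let $\bfz = (\bfx\t,\bfy\t)\t \in \bbR^{n+\ell}$. A direct computation gives the Hessian
\begin{equation*}
    \bfH = \begin{bmatrix} \bfA\t\bfA + \lambda^2\bfD\t\bfD & -\lambda^2\bfD\t \\ -\lambda^2 \bfD & \lambda^2 \bfI_\ell\end{bmatrix},
\end{equation*}
and for any $(\bfu,\bfv) \in \bbR^n \times \bbR^\ell$ one checks
\begin{equation*}
    (\bfu\t,\bfv\t)\, \bfH\, (\bfu\t,\bfv\t)\t = \|\bfA\bfu\|_2^2 + \lambda^2\|\bfD\bfu - \bfv\|_2^2.
\end{equation*}
This is the key identity. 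If $(\bfu,\bfv)\ne{\bf 0}$ and $\bfu\ne{\bf 0}$, full column rank of $\bfA$ forces $\|\bfA\bfu\|_2^2>0$; if $\bfu={\bf 0}$ and $\bfv\ne{\bf 0}$, then $\lambda^2\|\bfv\|_2^2>0$ since $\lambda>0$. Hence $\bfH\succ 0$, so $Q$ is strictly convex on $\bbR^{n+\ell}$. Note that $\bfD$ is not required to have full column rank anywhere in this step, which is the only place where one might initially worry.

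Next I would add the $\ell^1$ term. Since $\bfy\mapsto\mu\|\bfy\|_1$ is convex (and lifted trivially to a convex function of $(\bfx,\bfy)$), the sum $f_{\rm joint} = Q + \mu\|\,\cdot\,\|_1$ is the sum of a strictly convex and a convex function, hence strictly convex. This immediately rules out having two distinct minimizers.

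For existence, I would establish coercivity. Let $\sigma_{\min}>0$ be the smallest eigenvalue of $\bfH$; then $Q(\bfz) \ge \tfrac{\sigma_{\min}}{2}\|\bfz\|_2^2 - C_1\|\bfz\|_2 - C_2$ for constants depending only on $\bfA,\bfb,\bfD,\bfc,\lambda$. Since $\mu\|\bfy\|_1\ge 0$, it follows that $f_{\rm joint}(\bfz)\to \infty$ as $\|\bfz\|_2\to\infty$. Being a continuous, proper, coercive function on $\bbR^{n+\ell}$, $f_{\rm joint}$ attains its infimum; combined with strict convexity, the minimizer $(\widehat\bfx,\widehat\bfy)$ is unique.

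The only subtlety, and the step I would be most careful to justify, is the quadratic-form identity: it is tempting to try to prove positive definiteness of $\bfH$ via its $(1,1)$-block $\bfA\t\bfA + \lambda^2\bfD\t\bfD$, which would require the weaker hypothesis $\mfN(\bfA)\cap\mfN(\bfD)=\{{\bf 0}\}$, but then one still has to handle the Schur complement in $\bfy$ and verify that the cross term $-\lambda^2\bfD\t$ does not destroy positivity. The completion-of-squares identity above sidesteps this entirely and exposes directly why full column rank of $\bfA$ alone suffices once $\lambda>0$.
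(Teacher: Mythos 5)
Your proof is correct and is essentially the paper's argument in Hessian language: the completion-of-squares identity you verify is precisely the statement that the stacked matrix $[\bfA , {\bf0}_{m\times \ell} ; \lambda \bfD , -\lambda \bfI_{\ell}]$ appearing in the paper's proof has full column rank, so the quadratic part is strictly convex and adding the convex $\ell^1$ term preserves strict convexity. The only genuine addition is your coercivity step establishing that the minimum is actually attained, which the paper leaves implicit even though strict convexity alone does not guarantee existence of a minimizer.
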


\begin{proof}
We may rewrite \cref{eg:fjt} as
\begin{equation}
\begin{aligned}
    f_{\rm joint}(\bfx,\bfy) & = \thf \norm[2]{
        \begin{bmatrix}
            \bfA & {\bf0}_{m\times \ell} \\ \lambda \bfD & -\lambda \bfI_{\ell}
        \end{bmatrix}
        \begin{bmatrix}
            \bfx \\ \bfy
        \end{bmatrix}
        -
        \begin{bmatrix}
            \bfb \\ -\lambda \bfc
        \end{bmatrix}
        }^2 + \mu \norm[1]{
        \begin{bmatrix}
            {\bf0}_{\ell \times n} & \bfI_{\ell}
        \end{bmatrix}
        \begin{bmatrix}
            \bfx \\ \bfy
        \end{bmatrix}
        }.
\end{aligned}
\end{equation}
Now, since $\bfA$ is assumed to have  full column rank, $[\bfA , {\bf0}_{m\times \ell} ; \lambda \bfD , -\lambda \bfI_{\ell}]$ also has full rank. Hence,  $\thf \norm[2]{\bfA\bfx -\bfb}^2 + \tfrac{\lambda^2}{2} \norm[2]{\bfD\bfx - \bfy + \bfc}^2$ is strictly convex. Moreover, $\mu \norm[1]{\bfy}$ is convex in $\bfx$ and $\bfy$. Therefore the sum as given by  $f_{\rm joint}$ is strictly convex with a unique minimizer denoted by  $(\widehat\bfx,\widehat\bfy)$.
\end{proof}

We now introduce the continuous mapping $\bfZ:\bbR^n \to \bbR^\ell$ for the shrinkage step \cref{eq:vecshrink}, for fixed but arbitrary $\bfc$ and $\mu,\lambda>0$, given by
\begin{equation}\label{eq:shrinkmap}
    \bfZ(\bfx) = \sign{\bfD\bfx+\bfc} \hadamard \left(\left|\bfD\bfx+\bfc\right| -  \frac{\mu}{\lambda^2}\bf1_{\ell} \right)_+.
\end{equation}
Then the projected function that corresponds to $f_{\rm joint}$ is defined to be $f_{\rm proj}:\bbR^n \to \bbR$ with
\begin{equation}\label{eq:fproj}
        f_{\rm proj}(\bfx) =\thf \norm[2]{\bfA\bfx -\bfb}^2 +  \tfrac{\lambda^2}{2}\norm[2]{\bfD\bfx - \bfZ(\bfx) +\bfc}^2+\mu \norm[1]{\bfZ(\bfx)}.
\end{equation}

\begin{lemma}\label{lem:globalmini}
The following statements are true.
\begin{enumerate}
    \item\label{lem:globalmini_1} For any $\bfx$ there exists a $\bfy$ such that $f_{\rm proj}(\bfx) = f_{\rm joint}(\bfx,\bfy)$.
    \item\label{lem:globalmini_2} For any $\bfy$ and arbitrary $\bfx$ the inequality $f_{\rm proj}(\bfx) \leq f_{\rm joint}(\bfx,\bfy)$ holds.
    \item Let $(\widehat\bfx,\widehat\bfy)$ be the (unique) global minimizer of $f_{\rm joint}$. Then $\widehat\bfx$ is the (unique) global minimizer of $f_{\rm proj}$.
\end{enumerate}
\end{lemma}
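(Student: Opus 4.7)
The proof plan is to exploit the fact---established in \cref{sec:ADMM}---that the map $\bfZ(\bfx)$ defined in \cref{eq:shrinkmap} is precisely the closed-form minimizer of the shrinkage subproblem \cref{eq:shrink}, i.e., of $f_{\rm joint}(\bfx,\,\cdot\,)$ with $\bfx$ fixed. Once this is recognized, the three claims reduce to the standard partial-minimization identity $f_{\rm proj}(\bfx) = \min_{\bfy}\, f_{\rm joint}(\bfx,\bfy)$, attained at $\bfy = \bfZ(\bfx)$.

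For part 1, I would simply take $\bfy = \bfZ(\bfx)$: setting $\bfd = \bfD\bfx + \bfc$ and substituting into \cref{eg:fjt}, a direct comparison with the definition of $f_{\rm proj}$ in \cref{eq:fproj} gives $f_{\rm joint}(\bfx,\bfZ(\bfx)) = f_{\rm proj}(\bfx)$. For part 2, which is the crux, I fix $\bfx$ and observe that
\[ f_{\rm joint}(\bfx,\bfy) - \tfrac{1}{2}\|\bfA\bfx - \bfb\|_2^2 \;=\; \tfrac{\lambda^2}{2}\|\bfd - \bfy\|_2^2 + \mu\|\bfy\|_1, \]
so that minimizing the right-hand side over $\bfy$ is exactly the separable shrinkage problem \cref{eq:shrink}. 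Its unique minimizer is given component-wise by the soft-threshold formula \cref{eq:vecshrink}, which is $\bfZ(\bfx)$. The inequality $f_{\rm proj}(\bfx) \le f_{\rm joint}(\bfx,\bfy)$ for arbitrary $\bfy$ then follows from $f_{\rm joint}(\bfx,\bfZ(\bfx)) = f_{\rm proj}(\bfx)$ (part 1) combined with the global minimality of $\bfZ(\bfx)$ in $\bfy$.

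For part 3, I would chain parts 1 and 2. Let $(\widehat\bfx,\widehat\bfy)$ be the unique minimizer of $f_{\rm joint}$ guaranteed by \cref{lem:jtunique} and pick any $\bfx$. By part 1 there exists $\bfy'$ with $f_{\rm proj}(\bfx) = f_{\rm joint}(\bfx,\bfy')$, while part 2 applied at $\widehat\bfx$ gives $f_{\rm proj}(\widehat\bfx) \le f_{\rm joint}(\widehat\bfx,\widehat\bfy)$. Combining,
\[ f_{\rm proj}(\widehat\bfx) \;\le\; f_{\rm joint}(\widehat\bfx,\widehat\bfy) \;\le\; f_{\rm joint}(\bfx,\bfy') \;=\; f_{\rm proj}(\bfx), \]
so $\widehat\bfx$ is a global minimizer of $f_{\rm proj}$. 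For uniqueness, if $\bfx^\star$ is any other minimizer, choosing $\bfy^\star$ via part 1 yields $f_{\rm joint}(\bfx^\star,\bfy^\star) = f_{\rm proj}(\bfx^\star) = f_{\rm proj}(\widehat\bfx) \le f_{\rm joint}(\widehat\bfx,\widehat\bfy)$, forcing equality and hence $(\bfx^\star,\bfy^\star) = (\widehat\bfx,\widehat\bfy)$ by \cref{lem:jtunique}; in particular $\bfx^\star = \widehat\bfx$.

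The only non-routine ingredient is the identification of $\bfZ(\bfx)$ with the $\bfy$-wise global minimizer of $f_{\rm joint}(\bfx,\,\cdot\,)$, which is already established in \cref{sec:ADMM} via \cref{eq:shrink}--\cref{eq:vecshrink}; the remainder is clean partial-minimization bookkeeping, so I do not anticipate any serious obstacle.
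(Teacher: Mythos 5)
Your proposal is correct and follows essentially the same route as the paper: identify $\bfZ(\bfx)$ as the exact $\bfy$-minimizer of $f_{\rm joint}(\bfx,\cdot)$ via the shrinkage formula, deduce the partial-minimization identity and inequality, and obtain existence and uniqueness of the minimizer of $f_{\rm proj}$ by chaining these with \cref{lem:jtunique}. Your version is, if anything, slightly more explicit in spelling out why $\bfZ(\bfx)$ solves the $\bfy$-subproblem, but the argument is the same.
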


\begin{proof}\,
\begin{enumerate}
    \item The first statement follows directly by the definitions, \cref{eg:fjt,eq:shrinkmap,eq:fproj}, i.e., if $\bfy = \bfZ(\bfx)$, then $f_{\rm proj} (\bfx) = f_{\rm joint}(\bfx,\bfy)$.
    \item
    Due to optimality of $\bfZ(\bfx) = \argmin_\bfy \ f_{\rm joint}(\bfx,\bfy)$ for arbitrary $\bfx$, we have $f_{\rm joint}(\bfx,\bfZ(\bfx))\leq f_{\rm joint}(\bfx,\bfy)$.
    \item By \cref{eg:fjt,eq:fproj} we have $f_{\rm proj}(\bfx) = f_{\rm joint}(\bfx,\bfZ(\bfx)) \geq f_{\rm joint}(\widehat\bfx,\widehat\bfy)$ for any $\bfx\in \bbR^n$. Thus  a global minimizer for $f_{\rm proj}$ is obtained at $\widehat\bfx$, i.e., $f_{\rm proj}(\widehat\bfx)=f_{\rm joint}(\widehat\bfx,\bfZ(\widehat\bfx))$. To show that the minimizer is unique we assume that there exists an $\widebar \bfx \neq \widehat\bfx$ that is also a global minimizer, i.e.,  $f_{\rm proj}(\widebar \bfx) = f_{\rm proj}(\widehat\bfx)$. By this assumption we have  that $f_{\rm joint}(\widehat\bfx,\widehat\bfy) = f_{\rm joint}(\widehat\bfx,\bfZ(\widehat\bfx)) = f_{\rm proj}(\widehat\bfx)= f_{\rm proj}(\widebar \bfx) = f_{\rm joint}(\widebar \bfx,\bfZ(\widebar \bfx))$.  Hence, $(\widebar \bfx,\bfZ(\widebar \bfx))\neq(\widehat\bfx,\widehat\bfy)$ is a global minimizer of $f_{\rm joint}$. But this contradicts the assumption that $(\widehat\bfx,\widehat\bfy)$ is the unique global minimizer of $f_{\rm joint}$, and thus $\widehat\bfx$ is the unique global minimizer of $f_{\rm proj}$.
\end{enumerate}
\end{proof}

It remains to be shown that there do not exist local minimizers of $f_{\rm proj}$, other than the one global minimizer given by $\widehat\bfx$.

\begin{lemma}\label{lem:nolocal}
Assume $\bfA$ has full column rank, $\bfc$ is fixed, and $\mu$, $\lambda>0$; then all minimizers of $f_{\rm proj}$ are global.
\end{lemma}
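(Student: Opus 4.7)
The plan is to establish convexity of $f_{\rm proj}$ on $\bbR^n$; once this is done the conclusion is immediate, since every local minimizer of a convex function is global, and by \cref{lem:globalmini} the unique global minimizer is $\widehat\bfx$. So everything reduces to a single structural claim about $f_{\rm proj}$.

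The key observation, already implicit in \cref{lem:globalmini}, is that $f_{\rm proj}$ is the partial minimization of $f_{\rm joint}$ over $\bfy$. For fixed $\bfx$, the map $\bfy\mapsto f_{\rm joint}(\bfx,\bfy)$ is the sum of a coercive, strictly convex quadratic in $\bfy$ and the convex term $\mu\|\bfy\|_1$; its unique minimizer is exactly the soft-threshold $\bfZ(\bfx)$ of \cref{eq:shrinkmap}, as can be read off from the subgradient optimality condition $\lambda^2(\bfy-(\bfD\bfx+\bfc))+\mu\,\partial\|\bfy\|_1\ni\bf0$. Hence $f_{\rm proj}(\bfx)=\min_{\bfy}f_{\rm joint}(\bfx,\bfy)$. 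I would then invoke the classical ``partial minimization preserves convexity'' argument: by \cref{lem:jtunique}, $f_{\rm joint}$ is jointly (strictly) convex, so for any $\bfx_1,\bfx_2\in\bbR^n$ and $\theta\in[0,1]$, choosing $\bfy=\theta\bfZ(\bfx_1)+(1-\theta)\bfZ(\bfx_2)$ as a feasible witness in the definition of $f_{\rm proj}$ at the convex combination and applying joint convexity on the right,
\begin{align*}
f_{\rm proj}(\theta\bfx_1+(1-\theta)\bfx_2)
 &\leq f_{\rm joint}\bigl(\theta\bfx_1+(1-\theta)\bfx_2,\,\theta\bfZ(\bfx_1)+(1-\theta)\bfZ(\bfx_2)\bigr)\\
 &\leq \theta f_{\rm joint}(\bfx_1,\bfZ(\bfx_1)) + (1-\theta)f_{\rm joint}(\bfx_2,\bfZ(\bfx_2))\\
 &= \theta f_{\rm proj}(\bfx_1) + (1-\theta)f_{\rm proj}(\bfx_2),
\end{align*}
where the last equality uses \cref{lem:globalmini}. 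Convexity of $f_{\rm proj}$ follows, and with it the statement of the lemma.

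I do not foresee any real obstacle in this plan. The heavy lifting (joint strict convexity of $f_{\rm joint}$ and the identification of a unique global minimizer) has already been done in \cref{lem:jtunique,lem:globalmini}, and what remains is the routine marginalization argument above, which requires neither differentiability of $\bfZ$ nor any hypothesis beyond the full-column-rank assumption on $\bfA$ inherited from \cref{lem:jtunique}. The only minor bookkeeping is verifying that $\bfZ(\bfx)$ truly attains the $\bfy$-minimum of $f_{\rm joint}(\bfx,\cdot)$, which is the standard soft-thresholding identity and needs no new ideas.
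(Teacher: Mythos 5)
Your proof is correct, but it takes a genuinely different route from the paper's. You establish the stronger structural fact that $f_{\rm proj}$ is convex, via the classical ``partial minimization preserves convexity'' argument: using \cref{lem:globalmini} to write $f_{\rm proj}(\bfx)=f_{\rm joint}(\bfx,\bfZ(\bfx))=\min_\bfy f_{\rm joint}(\bfx,\bfy)$, inserting the witness $\theta\bfZ(\bfx_1)+(1-\theta)\bfZ(\bfx_2)$, and invoking joint convexity of $f_{\rm joint}$. The paper instead argues by contradiction without ever asserting convexity of $f_{\rm proj}$: for a putative local minimizer $\widebar\bfx\neq\widehat\bfx$ it fixes $\widebar\bfy=\bfZ(\widebar\bfx)$, uses strict convexity of $f_{\rm joint}(\,\cdot\,,\widebar\bfy)$ in $\bfx$ to produce a nearby $\bfx^*$ with $f_{\rm joint}(\bfx^*,\widebar\bfy)<f_{\rm joint}(\widebar\bfx,\widebar\bfy)$, and then chains $f_{\rm proj}(\bfx^*)\leq f_{\rm joint}(\bfx^*,\widebar\bfy)<f_{\rm proj}(\widebar\bfx)$ via \cref{lem:globalmini} to defeat local minimality. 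Your marginalization argument buys two things: it yields convexity of $f_{\rm proj}$ as a reusable conclusion, and it needs only \emph{joint convexity} of $f_{\rm joint}$ (which holds as a sum of convex functions even without the full-column-rank assumption on $\bfA$), so the lemma's conclusion actually survives with a weaker hypothesis; the strict convexity from \cref{lem:jtunique} is only needed for the \emph{uniqueness} of the global minimizer, not for ``all local minimizers are global.'' The paper's pointwise argument is more self-contained in that it never names convexity of $f_{\rm proj}$, but it leans on strict convexity in $\bfx$ and is otherwise no shorter. Your one remaining obligation --- verifying that $\bfZ(\bfx)$ attains $\min_\bfy f_{\rm joint}(\bfx,\bfy)$ --- is exactly the soft-thresholding identity the paper also relies on in the proof of \cref{lem:globalmini}, so nothing is missing.
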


\begin{proof}
Note, $f_{\rm proj}$ has a local minimum at $\widebar\bfx$ if there exists a neighborhood $\calD(\widebar\bfx)$ of $\widebar\bfx$ such that $f_{\rm proj}(\bfx)\geq f_{\rm proj}(\widebar\bfx)$ for all $\bfx$ in $\calD(\widebar\bfx)$. Let $(\widehat\bfx,\widehat\bfy) = (\widehat\bfx, \bfZ(\widehat\bfx))$ denote the unique global minimum of $f_{\rm joint}$ according to \cref{lem:jtunique}. Since $f_{\rm joint}$ is strictly convex, $f_{\rm joint}$ is inevitably strictly convex in each of its components $\bfx$ and $\bfy$. Let $\widebar\bfy$ be arbitrary but fixed; then for any $\widebar\bfx\in \bbR^n$ with $\widebar\bfx \neq \widehat\bfx$ and $\epsilon>0$, there exists an $\bfx^*\neq\widebar\bfx$ with $\norm[2]{\bfx^*-\widehat\bfx}<\epsilon$ such that
\begin{equation}
    f_{\rm joint} (\bfx^*,\widebar \bfy) < f_{\rm joint} (\widebar\bfx,\widebar\bfy) = f_{\rm proj} (\widebar\bfx)
\end{equation}
due to the strict convexity in $\bfx$.  Further, by \cref{lem:globalmini}, \cref{lem:globalmini_1} we have  $f_{\rm proj}(\bfx^*) = f_{\rm joint} (\bfx^*,\bfZ(\bfx^*))$, and, by \cref{lem:globalmini}, \cref{lem:globalmini_2},   $f_{\rm joint} (\bfx^*,\bfZ(\bfx^*))\leq f_{\rm joint}(\bfx^*,\widebar\bfy)$. Together, we have  $$f_{\rm proj}(\bfx^*)= f_{\rm joint} (\bfx^*,\bfZ(\bfx^*)) \leq f_{\rm joint}(\bfx^*,\widebar\bfy) <
f_{\rm joint} (\widebar\bfx,\widebar\bfy) = f_{\rm proj} (\widebar\bfx). $$

Hence, there does \emph{not} exist a neighborhood $\calD(\widebar\bfx)$ around $\widebar\bfx$ for which $f_{\rm proj}(\bfx)\geq f_{\rm proj}(\widebar\bfx)$ for all $\bfx \in \calD(\widebar\bfx)$. Hence $\widebar\bfx\neq \widehat\bfx$ cannot be a local minimizer. For $\widebar\bfx = \widehat\bfx$ we have $f_{\rm proj}(\widehat\bfx) = f_{\rm joint}(\widehat\bfx,\bfZ(\widehat\bfx))$ and there is nothing to show.
\end{proof}

Taking the results of \cref{lem:globalmini,lem:nolocal} together we arrive at the main convergence result, in which we now consider the minimization at step $k$ of \cref{alg:vpal}.

\begin{theorem}\label{thm:vpal}
    Given optimization problem \cref{eq:general_lasso}, where $\bfA$ has full column rank, $\mu>0$, and $\lambda>0$ sufficiently large, \cref{alg:vpal} converges to the unique minimizer $\widehat\bfx$ of \cref{eq:general_lasso}.
\end{theorem}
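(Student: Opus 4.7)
The strategy decouples inner-loop analysis (lines~\ref{ln:inner_start_vpal}--\ref{ln:inner_end_vpal}) from outer-loop analysis (lines~\ref{ln:outer_start_vpal}--\ref{ln:outer_end_vpal}). Lemmas~\ref{lem:jtunique}--\ref{lem:nolocal} already supply the key structural facts: for fixed $\bfc_k$ and $\lambda>0$ the subproblem \cref{eq:lagmin} has a unique minimizer $(\widehat\bfx_k,\widehat\bfy_k)=(\widehat\bfx_k,\bfZ(\widehat\bfx_k))$, and $\widehat\bfx_k$ is the unique stationary point of $f_{\rm proj}$. It therefore suffices to prove that the inner loop drives $\bfx^{(j)}\to\widehat\bfx_k$; the outer iteration then coincides exactly with the classical method of multipliers \cref{eq:augLag} applied to \cref{eq:general_lassosplit}, whose convergence to the unique minimizer $\widehat\bfx$ of \cref{eq:general_lasso}, under full column rank of $\bfA$ and $\lambda$ sufficiently large, is standard \cite{bertsekas2014constrained,hestenes1969multiplier,powell1969method}.

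\textbf{The inner loop is smooth steepest descent in disguise.} The crucial observation is that although the shrinkage map $\bfZ$ has kinks, the function $f_{\rm proj}$ itself lies in $C^1(\bbR^n)$. Indeed, the map $\bfu\mapsto\min_\bfy\{\tfrac{\lambda^2}{2}\|\bfu-\bfy\|_2^2+\mu\|\bfy\|_1\}$ is the Moreau envelope of $\mu\|\cdot\|_1$ at parameter $1/\lambda^2$, a Huber-type function with Lipschitz gradient $\lambda^2(\bfu-\bfZ(\bfu))$. Composing with the affine map $\bfx\mapsto\bfD\bfx+\bfc_k$ and adding the smooth data fit yields $f_{\rm proj}\in C^1(\bbR^n)$, and the envelope theorem identifies $\nabla f_{\rm proj}(\bfx^{(j)})$ with the expression $\bfg_j$ from \cref{eq:gupdate} \emph{whenever} the invariant $\bfy^{(j)}=\bfZ(\bfx^{(j)})$ holds. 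The shrinkage at line~\ref{ln:vpal_yk} reinstates this invariant at every iteration, so from $j\geq 1$ onward the inner loop is exact steepest descent on $f_{\rm proj}$ with exact line search, followed by a redundant reset of $\bfy$ that maintains $f_{\rm joint}(\bfx^{(j)},\bfy^{(j)})=f_{\rm proj}(\bfx^{(j)})$ by \cref{lem:globalmini}.

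\textbf{Convergence.} Since $\alpha=0$ is admissible, the line search yields $f_{\rm proj}(\bfx^{(j+1)})\leq f_{\rm proj}(\bfx^{(j)})$; combined with coercivity of $f_{\rm proj}$ (the quadratic data term is positive definite by full column rank of $\bfA$), this confines $\{\bfx^{(j)}\}$ to a compact sublevel set. A Zoutendijk-type argument for exact-line-search steepest descent on a $C^1$ coercive function then forces $\nabla f_{\rm proj}(\bfx^{(j)})\to{\bf0}$, and \cref{lem:nolocal} identifies the only possible limit point as $\widehat\bfx_k$; compactness upgrades this to convergence of the whole sequence, $\bfx^{(j)}\to\widehat\bfx_k$. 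Feeding this inner convergence into the outer iteration, the update $\bfc_{k+1}=\bfc_k+\bfD\bfx_{k+1}-\bfy_{k+1}$ is precisely the Lagrange multiplier step of the method of multipliers, and the classical theory cited above closes the argument.

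\textbf{Main obstacle.} The principal technical hurdle is the transition from the non-smooth formulation to smooth calculus on $f_{\rm proj}$: rigorously establishing $f_{\rm proj}\in C^1$ via Moreau regularization of $\mu\|\cdot\|_1$ and identifying $\bfg_j$ with $\nabla f_{\rm proj}(\bfx^{(j)})$ under the recurring invariant $\bfy^{(j)}=\bfZ(\bfx^{(j)})$. Once this identification is secured, the remainder reduces to textbook descent analysis together with standard augmented Lagrangian convergence theory.
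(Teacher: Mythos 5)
Your proposal is correct and shares the paper's overall architecture: defer the outer loop to classical augmented Lagrangian theory, and reduce the inner loop to showing convergence to the unique minimizer of $f_{\rm proj}$, whose existence and uniqueness come from \cref{lem:jtunique,lem:globalmini,lem:nolocal}. Where you genuinely diverge is in how the inner loop is analyzed. The paper computes a subgradient of $f_{\rm joint}$ with respect to $\bfy$, uses the optimality of $\bfy=\bfZ(\bfx)$ to cancel the $\bfy$-terms, concludes that $-\bfg_j$ is a descent direction for $f_{\rm proj}$, and then simply asserts that exact line search along these directions ``ensures an efficient descent until the subgradient optimality conditions are fulfilled''---it never actually proves that the descent iterates converge to the minimizer. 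Your route closes exactly that gap: recognizing the partial minimization over $\bfy$ as the Moreau envelope of $\mu\norm[1]{\mdot}$ shows $f_{\rm proj}$ is convex, coercive, and $C^1$ with Lipschitz gradient, and identifies $\bfg_j$ as \emph{exactly} $\nabla f_{\rm proj}(\bfx^{(j)})$ once the invariant $\bfy^{(j)}=\bfZ(\bfx^{(j)})$ is restored by the shrinkage step (correctly excepting $j=0$, where $\bfy^{(0)}=\bfy_k$ is the shrinkage under the \emph{previous} multiplier $\bfc_{k-1}$, so the invariant can fail; admissibility of $\alpha=0$ still guarantees monotone descent there). This turns the inner loop into textbook exact-line-search gradient descent on a smooth convex coercive function, for which a Zoutendijk argument plus uniqueness of the stationary point delivers whole-sequence convergence. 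In short, you buy an actual convergence proof of the inner iteration where the paper offers only a descent-direction observation; the paper's version is shorter but leans on an unproved assertion, and also sidesteps the nonsmoothness of $\norm[1]{\mdot}$ less carefully (its pointwise subgradient $\sign{\bfy}\hadamard\bf1$ is only a selection of the subdifferential at zero components, whereas your Moreau-envelope formulation avoids the issue entirely). I see no gap in your argument.
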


\begin{proof}
As noted above, convergence of the outer loop (lines \ref{ln:outer_start_vpal} to \ref{ln:outer_end_vpal} of \cref{alg:vpal}) is well established for augmented Lagrangian methods, see \cite{bertsekas1995nonlinear,bertsekas2014constrained} for details. It remains to be shown that the variable projection corresponding to the inner loop, steps \ref{ln:inner_start_vpal} to \ref{ln:inner_end_vpal} of \cref{alg:vpal}, solves \cref{eq:lagmin}. \cref{lem:jtunique,lem:nolocal} ensure uniqueness of the minimizer $\widehat\bfx$ of $f_{\rm proj}$. Since
\begin{align}
    \mfg_\bfy= \lambda^2\left(-\bfD\bfx + \bfy -\bfc\right) + \mu\, (\sign{\bfy} \hadamard \bf1)
\end{align}
is the subgradient of $f_{\rm joint}$ with respect to $\bfy$ and due to the optimality for $\bfy = \bfZ(\bfx)$ we have
\begin{equation}
    \lambda^2\left(-\bfD\bfx + \bfZ(\bfx) -\bfc\right) + \mu\, (\sign{\bfZ(\bfx)} \hadamard \bf1) = \bf0.
\end{equation}
Therefore,
\begin{equation*}
\bfs
   = -\left(\bfA\t(\bfA\bfx -\bfb) + \lambda^2 \bfD\t\left(\bfD\bfx + \bfc -\bfZ(\bfx)\right) \right)
\end{equation*}
is a descent direction of $f_{\rm proj}$ corresponding to $-\bfg_j$ in \cref{alg:vpal}. Paired with an optimal step size $\alpha_j$ as defined in \cref{eq:alphaupdate}, an efficient descent is ensured until the subgradient optimality conditions are fulfilled.
\end{proof}

Notice that an optimal step size for the projected function $f_{\rm proj}$ as computed in \cref{eq:alphaupdate} is not required. Rather it may be sufficient to use an efficient step size selection that also provides a decrease along the gradient direction. Since $\bfy^{(j)} \approx \bfZ(\bfx^{(j)}-\alpha_j\bfg_j)$, especially in later iterations, we may replace this term in $f_{\rm proj}$, resulting in a linear least squares problem where the (linearized) optimal step size can be computed efficiently by
\begin{equation}\label{eq:alphaupdateLin}
    \alpha_j = \frac{\bfg_j\t\bfg_j}{\bfh_j\t\bfh_j}, \qquad \mbox{where} \qquad  \bfh_j= \begin{bmatrix}\bfA \\ \lambda\bfD \end{bmatrix}\bfg_j.
\end{equation}
Empirically, we observe that the linearized step size selection is very close to the optimal step size $\widehat \alpha$, approximately underestimated by about $10\%$, and is computationally less expensive (experiment not shown).

Hence, the dominant costs to obtain the update $\bfx^{(j+1)}$ in \cref{eq:xupdate} are the matrix-vector products needed to generate $\bfg_j$ and $\bfh_j$. There are two matrix-vector multiplications with $\bfA$, two with $\bfD$, and one each with their respective transposes. This corresponds to two multiplications with matrices of sizes $(m +\ell) \times n$ and $n \times (m+\ell)$ yielding a computational complexity of $\calO((m+\ell)n)$.

Hence, the computational complexity of \cref{alg:vpal} is $\calO(KJ(m+\ell)n)$, where $J$ refers to an average of the number of inner iterations for the inner while loop (steps \ref{ln:inner_start_vpal} to \ref{ln:inner_end_vpal} in \cref{alg:vpal}) and $K$ is the number of outer iterations.

In the practical implementation of the variable projection augmented Lagrangian method \vpal, we eliminate the while loop over $\bfx^{(j)}$ and $\bfy^{(j)}$ (steps \ref{ln:inner_start_vpal} to \ref{ln:inner_end_vpal} in \cref{alg:vpal}) and only perform a single update of $\bfx_k$ and $\bfy_k$. This reduces the complexity within the algorithm of determining suitable stopping conditions for carrying out the inexact solve, and reduces the computational complexity of a practical implementation of \vpal~to $\calO(K(m+\ell)n)$. We observe fast convergence as highlighted in \cref{sec:numerics}.
To terminate the outer $k$ iteration we use stopping criteria that are adapted from standard criteria for terminating iterations in unconstrained optimization, i.e., given a user-specified tolerance $\tau>0$, we terminate the algorithm when both $f(\bfx_{k},\bfy_{k}) - f(\bfx_{k+1},\bfy_{k+1}) \leq \tau (1+ f(\bfx_{k+1},\bfy_{k+1}))$ and $\norm[\infty]{\bfx_{k} - \bfx_{k+1}}\leq \sqrt{\tau}(1+\norm[\infty]{\bfx_{k+1}})$~\cite{gill2019practical}.
Matlab source code of the \vpal~method is available at \href{www.github.com/matthiaschung/vpal}{\texttt{www.github.com/matthiaschung/vpal}}\footnote{{\tt vpal} code and demo available upon acceptance.}.

\section{Numerical experiments}\label{sec:numerics}

In the following we perform various large-scale numerical experiments to demonstrate the benefits of {\tt vpal}. We first demonstrate the performance of {\tt vpal} in comparison to a standard {\tt admm} method on a denoising example in \cref{sec:denoise}. We investigate its scalability in 3D medical tomography inversion in \cref{sec:tomo}, and discuss regularization parameter selection methods in \cref{sec:regpara}. Key metrics for our numerical investigations are the relative error and relative residual defined by  $e(\bfx_k) = \frac{\norm[2]{\bfx_k - \bfx_{\rm true}}}{\norm[2]{\bfx_{\rm true}}}$ and  $r(\bfx_k) = \frac{\norm[2]{\bfA\bfx_k - \bfb}}{\norm[2]{\bfb}}$. Likewise when using $e(\widehat\bfx(\mu,\lambda))$ this refers to the converged (or final) value for the error in the solution $\widehat\bfx$ for a given parameter set $(\mu,\lambda)$.

\subsection{Denoising experiment}\label{sec:denoise}
\begin{figure}
    \centering
    \includegraphics[width=0.495\textwidth]{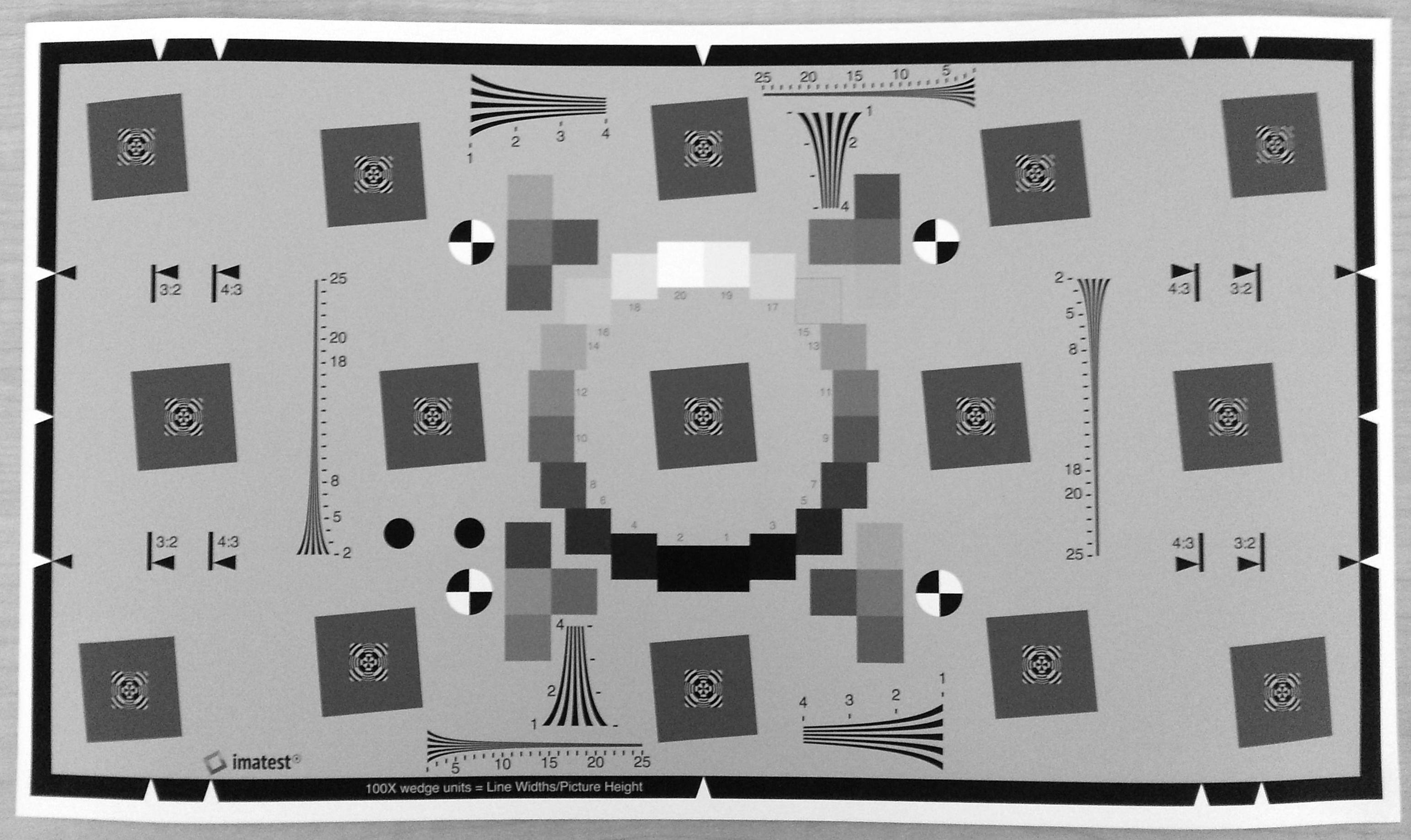}
    \includegraphics[width=0.495\textwidth]{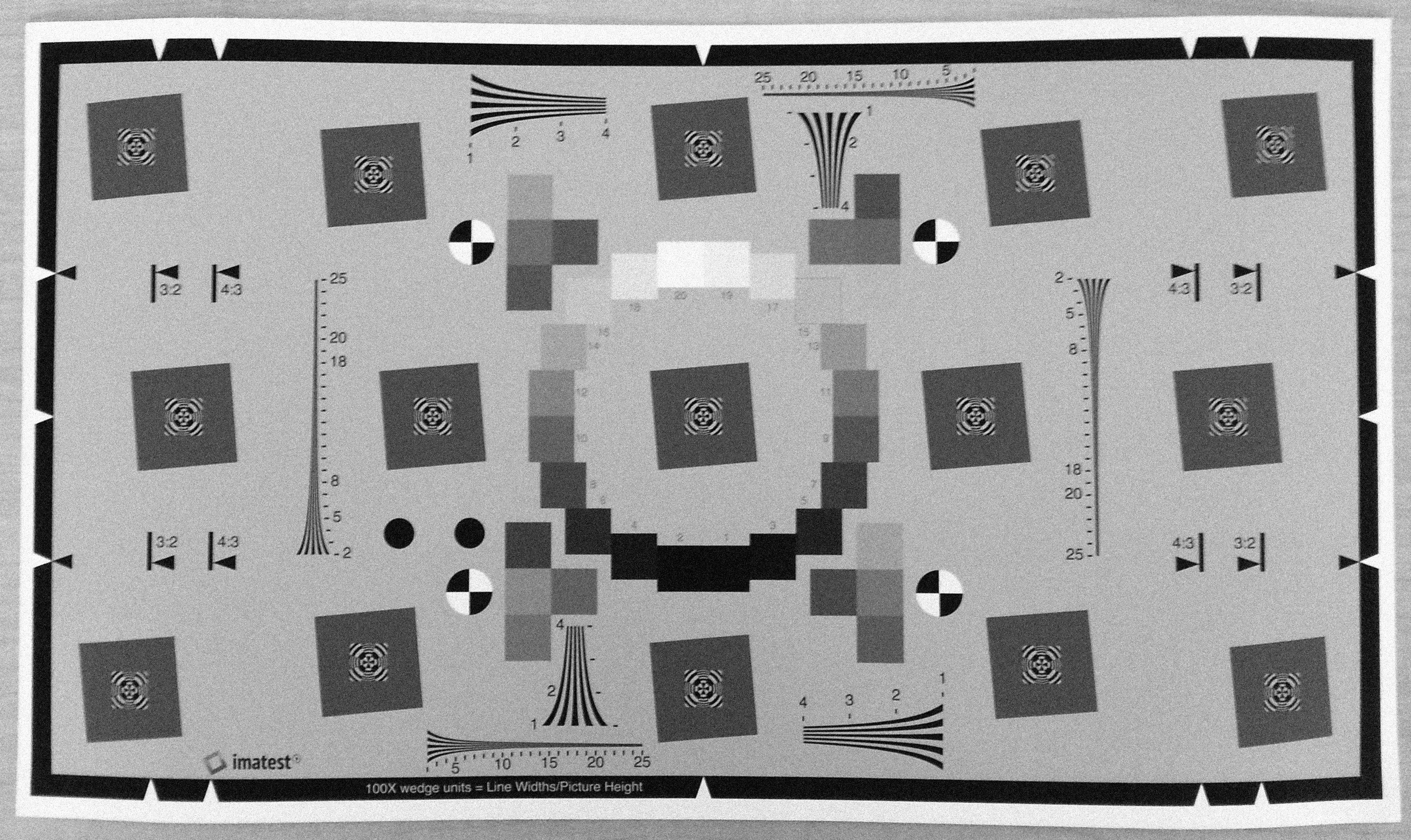}
    \caption{On the left we show Matlab's test image {\tt eSFRTestImage.jpg} representing $\bfx_{\rm true}$, on the right we depict noisy observations represented by $\bfb$ with $10$\% Gaussian white noise which yields an SNR of $20$.}
    \label{fig:experiment1a_images}
\end{figure}
In a first experiment we investigate the convergence of {\tt vpal}, compared to {\tt admm}. We consider an image denoising problem with $\bfA = \bfI_n$, while $\bfb$ is a Matlab test image {\tt eSFRTestImage.jpg} of size $1,\!836 \times 3,\!084$, so that $\bfx_{\rm true}$ is of size $n = 16,\!986,\!672$, see \cref{fig:experiment1a_images}, left panel. We contaminate $\bfx_{\rm true}$ with $10$\% Gaussian white noise, which yields an SNR of $20$,  see \cref{fig:experiment1a_images} right panel. Here, we utilize a total variation regularization where $\bfD$ is the 2D finite difference matrix. We fix the regularization parameter as $\mu = 10$, select a stopping tolerance $\tau = 10^{-4}$, and observe the relative error   $e(\bfx_k)$ and relative residual $r(\bfx_k)$ within iterations $k$ for both methods. The final approximations are denoted by $\bfx_{\tt vpal}$ and $\bfx_{\tt admm}$ with reconstruction errors $e(\bfx_{\tt vpal}) = 2.890\cdot 10^{-2}$ and $e(\bfx_{\tt admm}) = 2.887\cdot 10^{-2}$, respectively. Results are displayed in \cref{fig:experiment1_err}, where it is shown that {\tt admm} reaches the solution in fewer iterations than {\tt vpal}, $28$ to $38$ iterations, respectively. The number of outer iterations used  in \cref{alg:admm,alg:vpal} is, however, an ambiguous computational currency. Following the discussion above, the main computational cost of {\tt admm} is the number of LSQR iterations utilized for each outer iteration $k$.

Notice {\tt vpal} has the equivalent computational complexity of only one LSQR solve per iteration $k$. Factoring in these computational costs, we notice that {\tt admm} overall requires $141$ LSQR iterations while {\tt vpal} requires $38$ computations, indicating almost a factor of four improvement in the total number of LSQR iterations. Consequently we expect {\tt vpal} to converge more rapidly.

\begin{figure}
    \centering
    \includegraphics[width=0.495\textwidth]{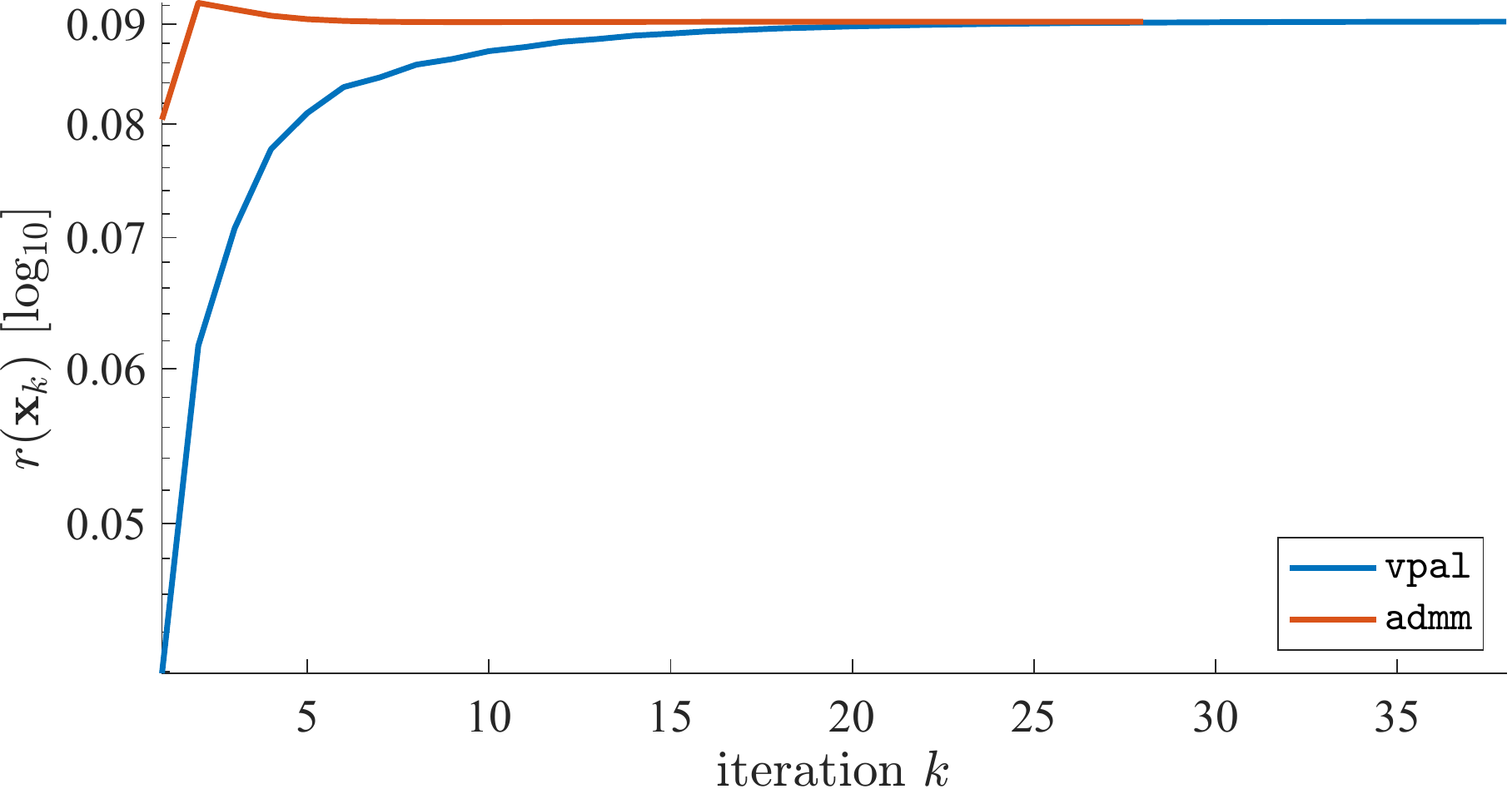}
    \includegraphics[width=0.495\textwidth]{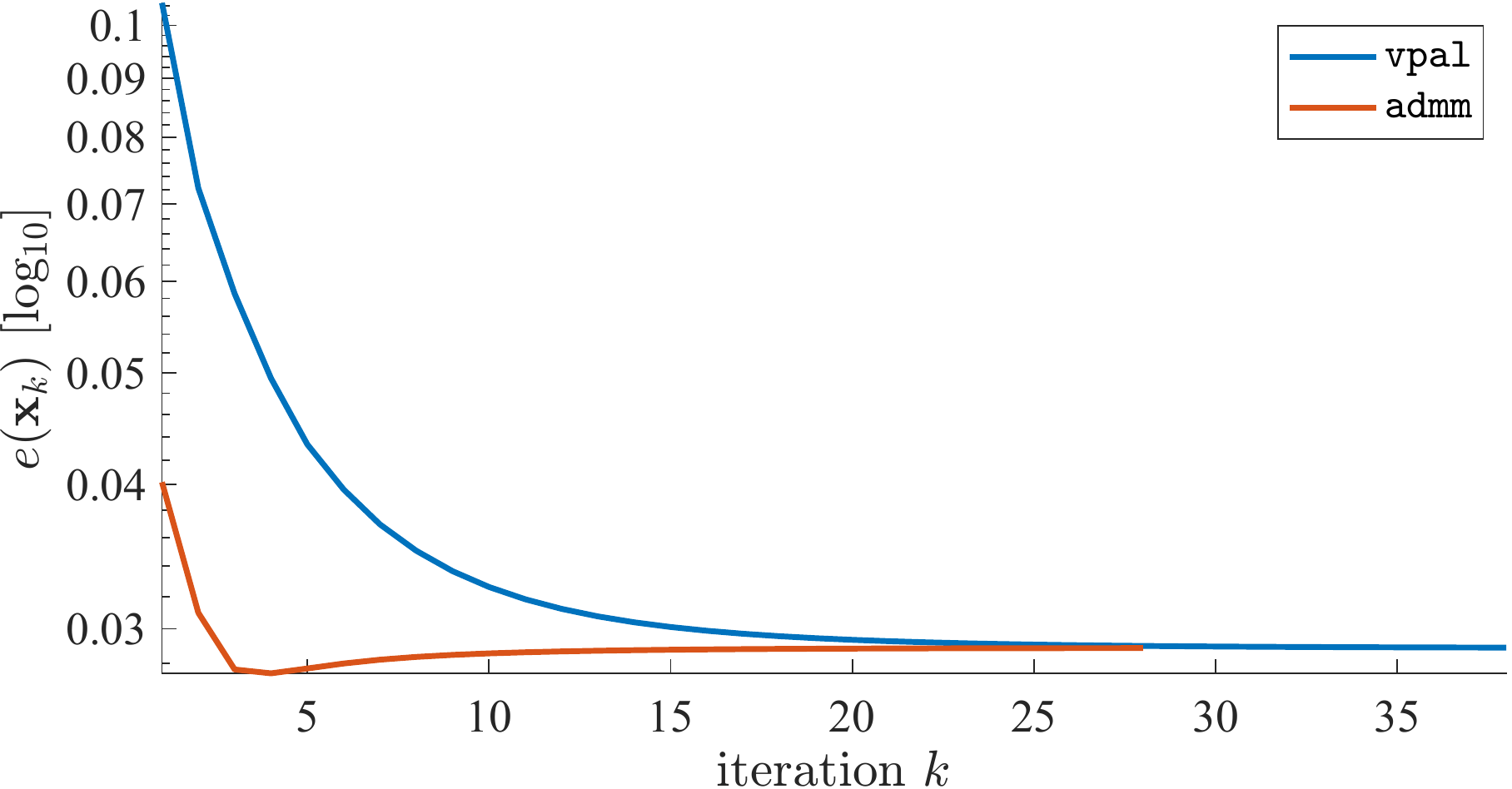}
    \caption{The left panel displays the relative residual  , while the right panel depicts the relative error   for {\tt vpal} in blue and {\tt admm} in red. Both methods converge to an approximation of $\bfx_{\rm true}$ where the relative difference between {\tt admm} and {\tt vpal} is negligible.}
    \label{fig:experiment1_err}
\end{figure}

Along this line, we extend our investigations and examine wallclock times, see \cref{fig:experiment1b_timing}. Using the same computational setup as before, we consider varying sizes of the image in \cref{fig:experiment1a_images}, left panel. Note that in this setup we also slightly vary the regularization parameter $\mu \in [1,20]$ to obtain more realistic timings for ``near optimal'' regularization parameters. Additionally, in our comparison we include timings for a generalized Tikhonov approach of the form \cref{eq:Tik}. Here this is referred to as {\tt tik}, and can be see as a computational lower bound for {\tt vpal} and {\tt admm} (left). The Tikhonov problem is solved using LSQR. Notice our comparison to a generalized Tikhonov approach comes with an asterisk, because (1) the range of good regularization parameters is largely different (to obtain close to optimal regularization parameters we choose $\widetilde \mu \in [0.1,0.4]$) and (2) the Tikhonov approach leads to inferior reconstructions. Nevertheless, we confirm that despite its computational superiority, {\tt vpal} does not lack numerical accuracy in comparison to {\tt admm}. We depict the relative errors $e(\bfx_{{\tt vpal}})$, $e(\bfx_{\tt admm})$, and $e(\bfx_{\tt tik})$ in \cref{fig:experiment1b_timing} right panel.

\begin{figure}
    \centering
    \includegraphics[width=0.495\textwidth]{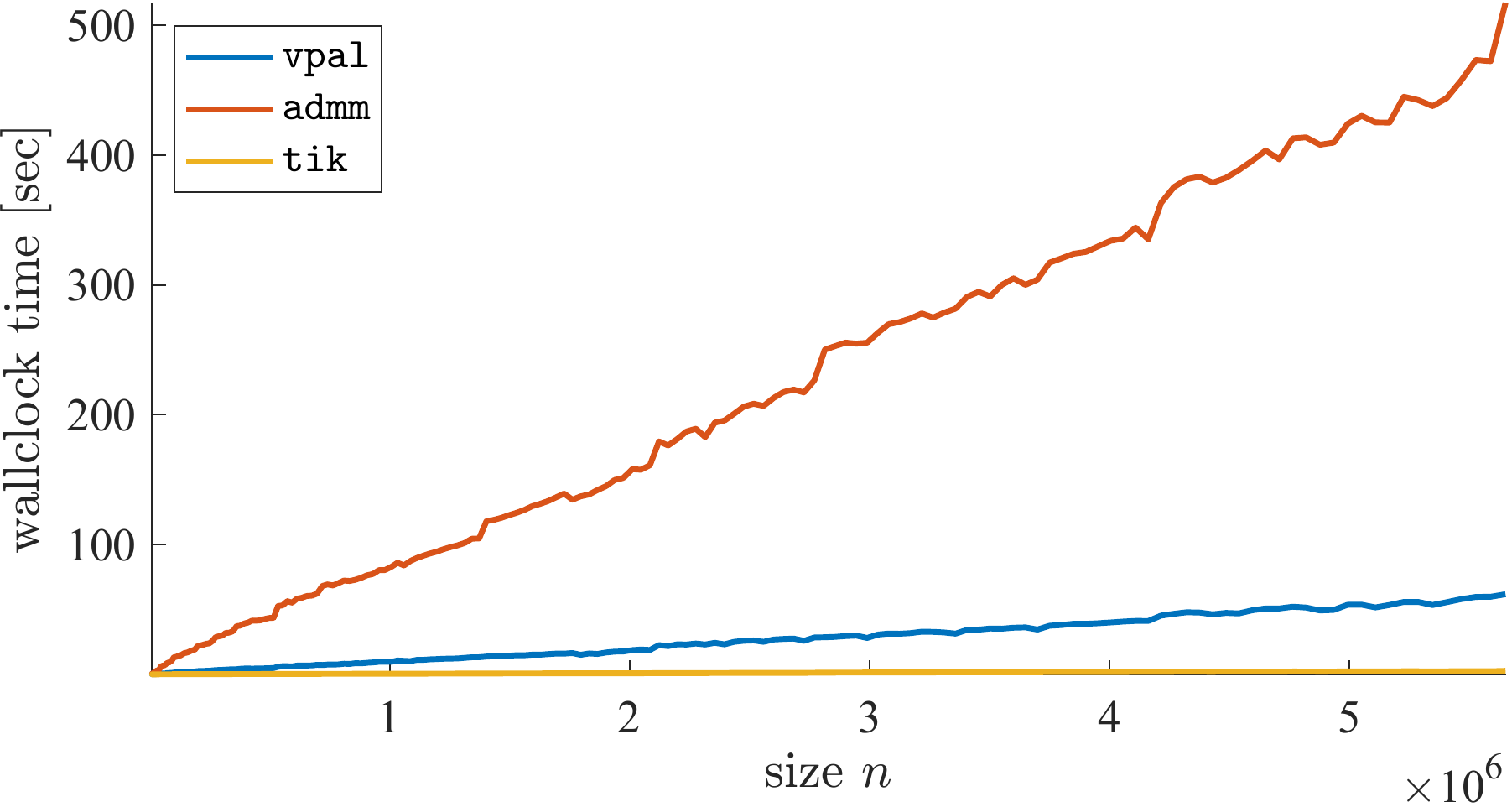}
    \includegraphics[width=0.495\textwidth]{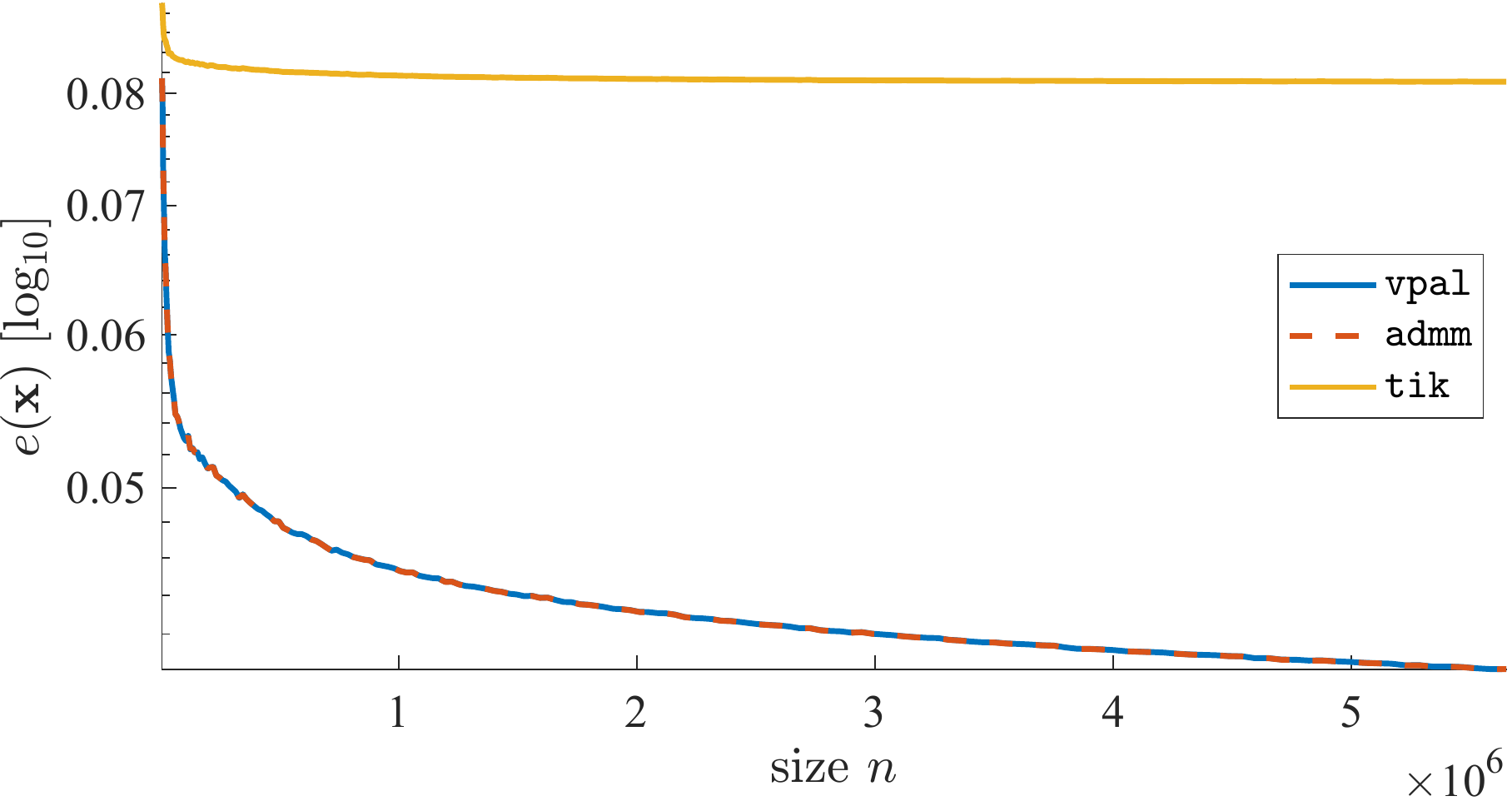}
    \caption{The left plot shows the wallclock timing of {\tt vpal} (blue), {\tt admm} (red), and {\tt tik} (yellow) with increasing image size $n$. Depicted on the right are the corresponding relative reconstruction errors. Despite their differences in computational complexity, both {\tt vpal} and {\tt admm} produce nearly identical reconstructions and are virtually indistinguishable, while {\tt tik} generates inferior reconstructions.}
    \label{fig:experiment1b_timing}
\end{figure}

In \cref{fig:experiment1b_gain} we show the average gain in computational speed-up of {\tt vpal} as compared to {\tt admm}. Depicted is the ratio of the wallclock timing ({\tt admm}$/${\tt vpal}) in black, confirming the computational advantage of {\tt vpal} with an approximate average speed-up of about 8. Additionally, we recorded the ratio of how many $\calO((m+\ell)n)$ operations (the main computational costs of LSQR equivalent steps) on average each of the methods requires ({\tt admm}$/${\tt vpal}) highlighted in purple. Note, due to the similar curves in \cref{fig:experiment1b_gain}, we empirically confirm that the computational costs are dominated by the LSQR solves. Hence, this denoising experiment illustrates that {\tt vpal} compared to {\tt admm} reconstructs images accurately at reduced computational cost. We further continue our numerical investigations on tomography applications.

\begin{figure}
    \centering
    \includegraphics[width=0.495\textwidth]{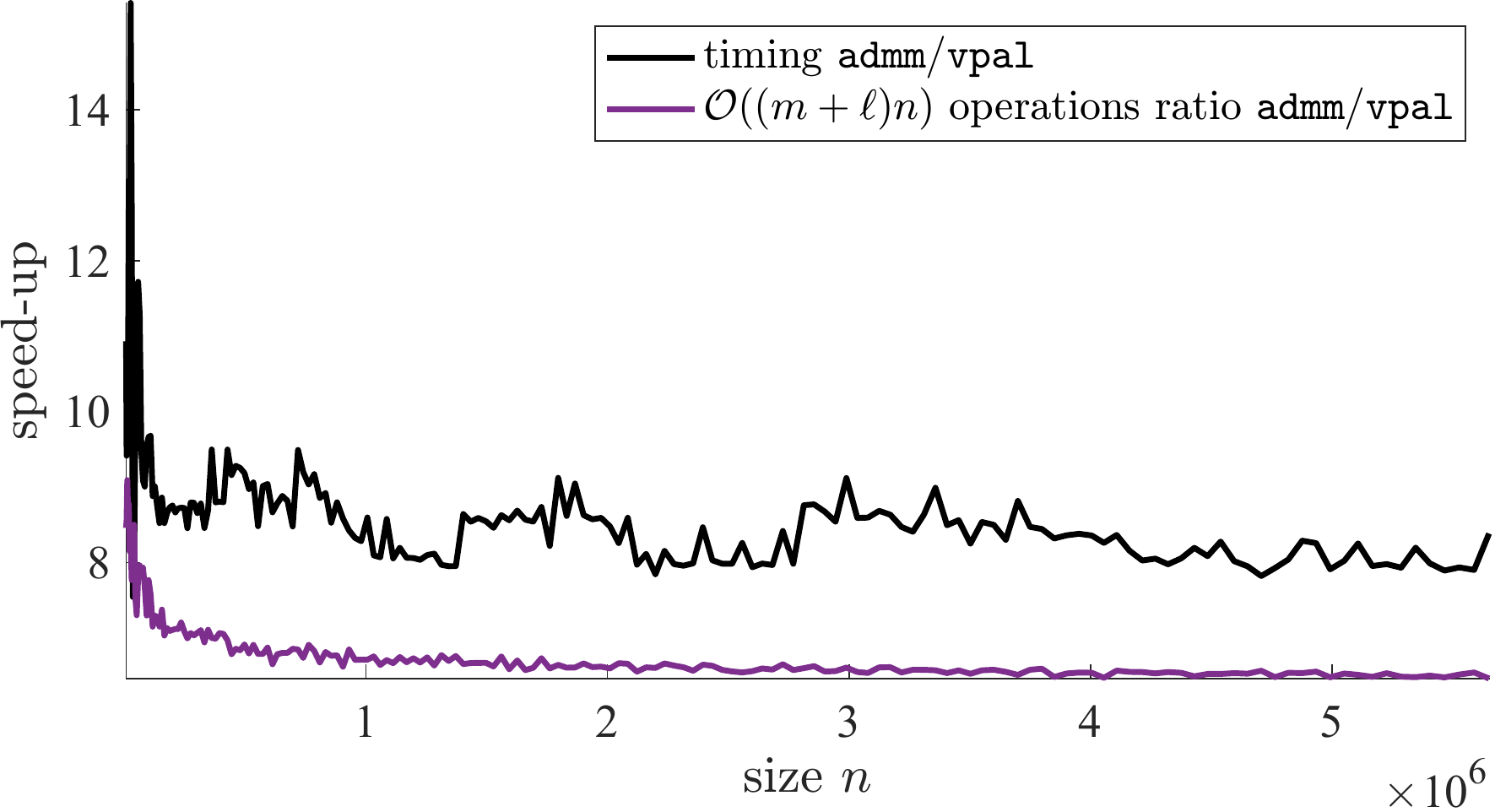}
    \caption{
    Depicted is the computational gain of {\tt vpal} compared to {\tt admm}. In black we show the timing ratio {\tt vpal}/{\tt admm} and in violet the ratio of $\calO((m+\ell)n)$ operations (main computational cost) of {\tt admm} compared to {\tt vpal}.}
    \label{fig:experiment1b_gain}
\end{figure}

\subsection{3D Medical Tomography}\label{sec:tomo}
To demonstrate the scalability of our {\tt vpal} method, we consider a 3D medical tomography application. As a ground truth we consider the 3D Shepp-Logan phantom, illustrated in \cref{fig:experiment3dtomo_xtrue} (left panel) by slice planes. For the discretization of the Shepp-Logan phantom we use $255\times 255 \times 255$ uniform cells, corresponding to $\bfx_{\rm true}\in \bbR^{16,581,375}$. Data is generated by a parallel beam tomography setup using the {\tt tomobox} toolbox \cite{tomobox}. We constructed projection images from $100$ random directions each of size $255 \times 255$. The projection images are contaminated with $5\%$ white noise, an SNR of approximately $26$, see four sample projection images in \cref{fig:experiment3dtomo_b}. Consequently, the ray-tracing matrix $\bfA$ is of size $6,\!502,\!500 \times 16,\!581,\!375$. Note that $\bfA$ is underdetermined and therefore is outside the scope of \cref{thm:vpal} without convergence guarantees. As the regularization operator we use total variation. Here $\bfD$ is of size $49,\!549,\!050 \times 16,\!581,\!375$ and uses zero boundary conditions. The regularization parameter is set to $\mu = 5$.

 \begin{figure}
    \centering
    \includegraphics[width=0.545\textwidth]{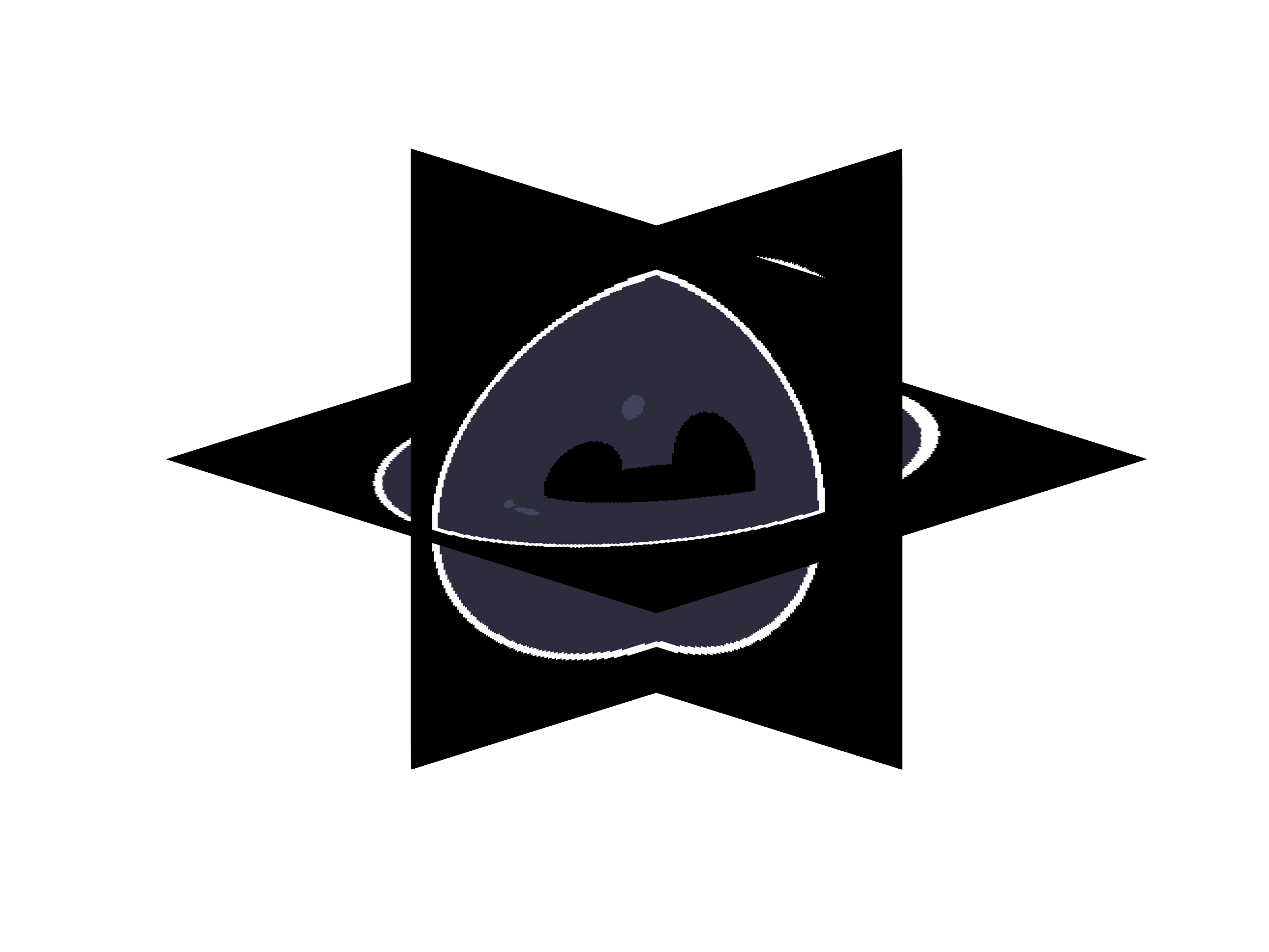}\hspace*{-10ex}
    \includegraphics[width=0.545\textwidth]{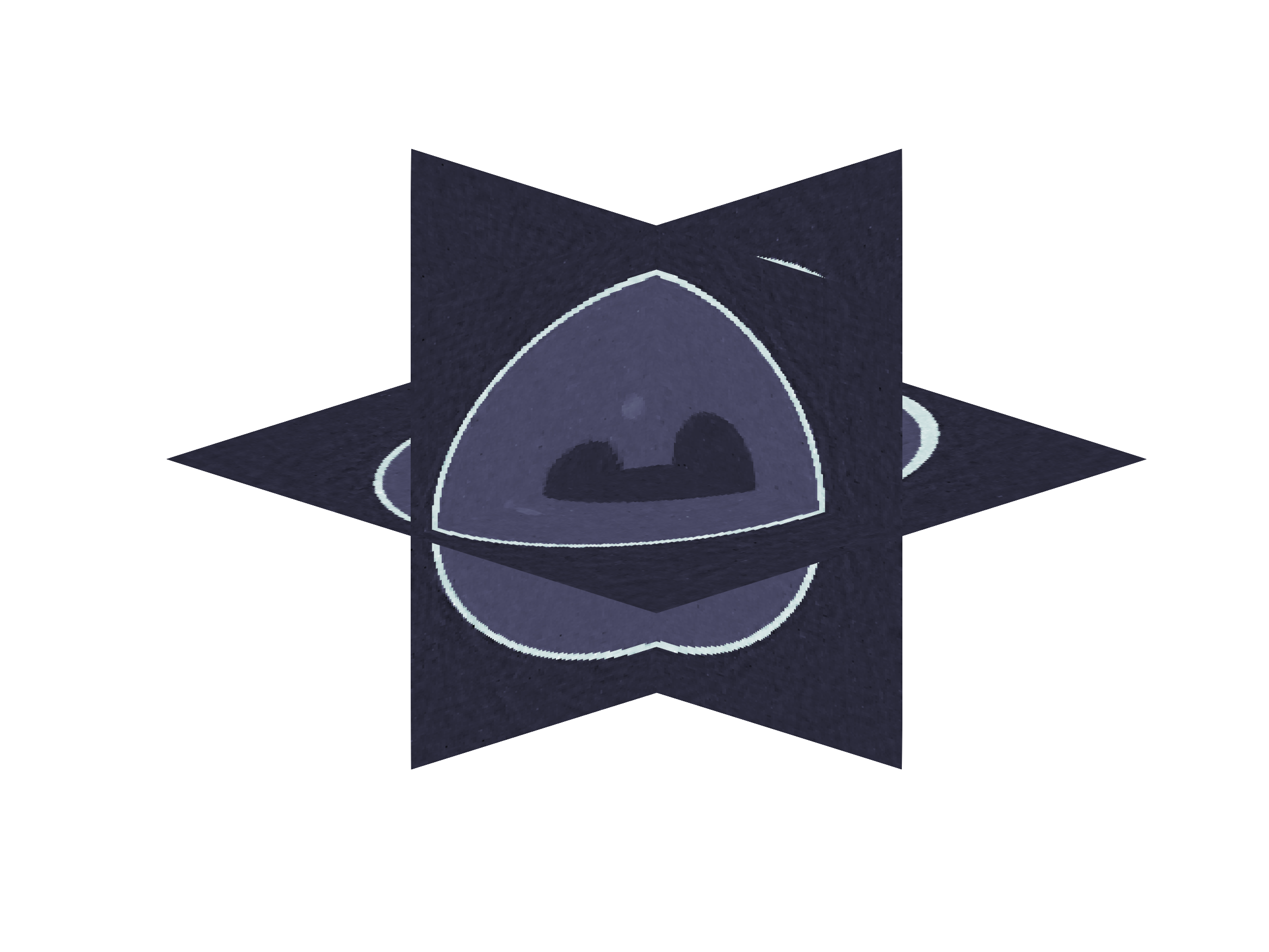}
    \caption{The left panel illustrates the 3D Shepp-Logan phantom using slice planes, while the right panel shows the reconstructed Shepp-Logan phantom using {\tt vpal}.}
    \label{fig:experiment3dtomo_xtrue}
\end{figure}

 \begin{figure}
    \centering
        \includegraphics[width=0.243\textwidth]{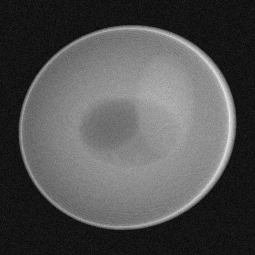}
        \includegraphics[width=0.243\textwidth]{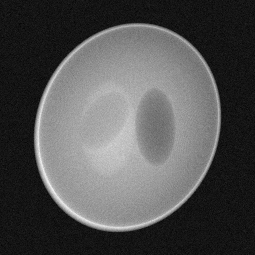}
        \includegraphics[width=0.243\textwidth]{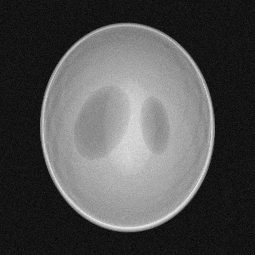}
        \includegraphics[width=0.243\textwidth]{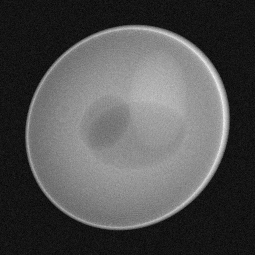}
    \caption{Shown four projection images with $5\%$ white noise and an SNR of approximately $26$.}
    \label{fig:experiment3dtomo_b}
\end{figure}

Our computations were performed on a 2013 MacPro with a 2.7 GHz 12-core Intel Xeon E5 processor with 64 GB Memory 1866 MHz DDR3 running with macOS Big Sur and Matlab 2021a. We use {\tt vpal} with its default tolerance set to $\tau = 10^{-6}$. Our method requires $414$ iterations to converge within the given tolerance with a wallclock time of about $2.4$ hours. The reconstructed 3D Shepp-Logan phantom $\bfx_{\tt vpal}$ is illustrated in \cref{fig:experiment3dtomo_xtrue} (right panel) by slice planes. The corresponding relative error is $e(\bfx_{\tt vpal}) = 0.1167$. A reconstruction error below $12\%$ is noteworthy, considering that the matrix $\bfA$ is significantly underdetermined. Again, notice that a standard Tikhonov regularization (using $\bfD = \bfI_n$) using LSQR is significantly faster (about $3.5$ minutes); however, the relative reconstruction error is inferior. The Tikhonov approach does not generate a relative error below $35\%$ even when utilizing an optimal regularization parameter (data not shown).

\begin{figure}
 \begin{center}
    \begin{tabular}{ccc}
        \includegraphics[width=0.301\textwidth]{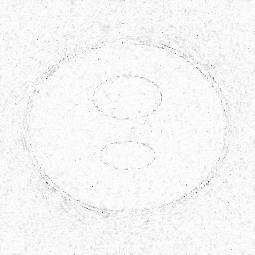} &
        \includegraphics[width=0.301\textwidth]{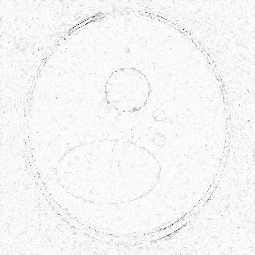} &
        \includegraphics[width=0.301\textwidth]{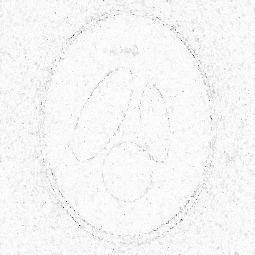} \\
        $x$-slice & $y$-slice & $z$-slice
        \end{tabular}
\end{center}
    \caption{Absolute error of 3D tomography image slices of the reconstruction $\bfx_{\rm vpal}$ in each principal ($x$, $y$, and $y$) coordinate direction with inverted colormap, where darker gray values correspond to larger absolute errors with largest absolute error of about $0.5346$.}
    \label{fig:experiment3dtomo_xvpal}
\end{figure}

\begin{figure}
    \centering
    \includegraphics[width=0.495\textwidth]{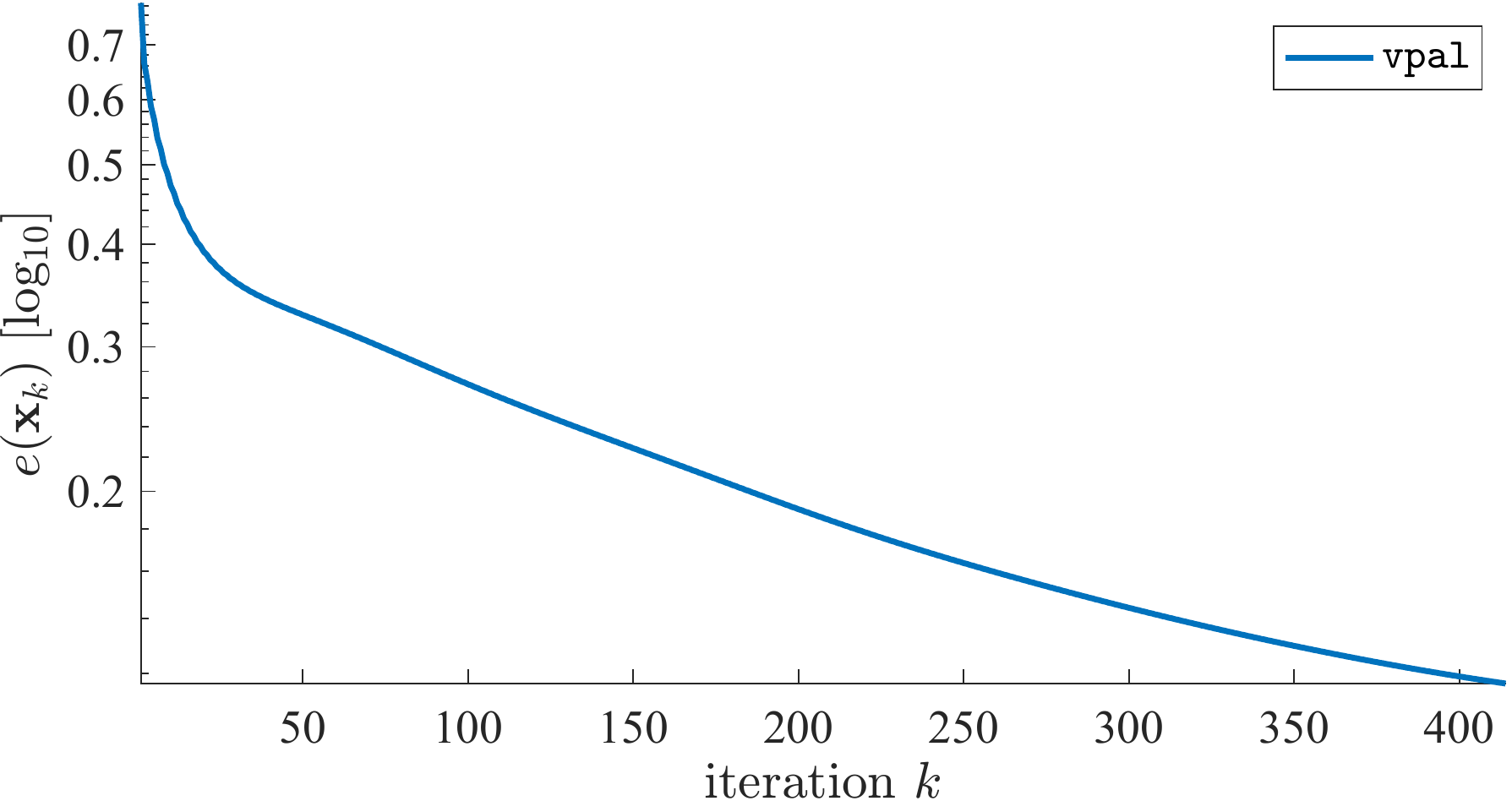}
    \includegraphics[width=0.495\textwidth]{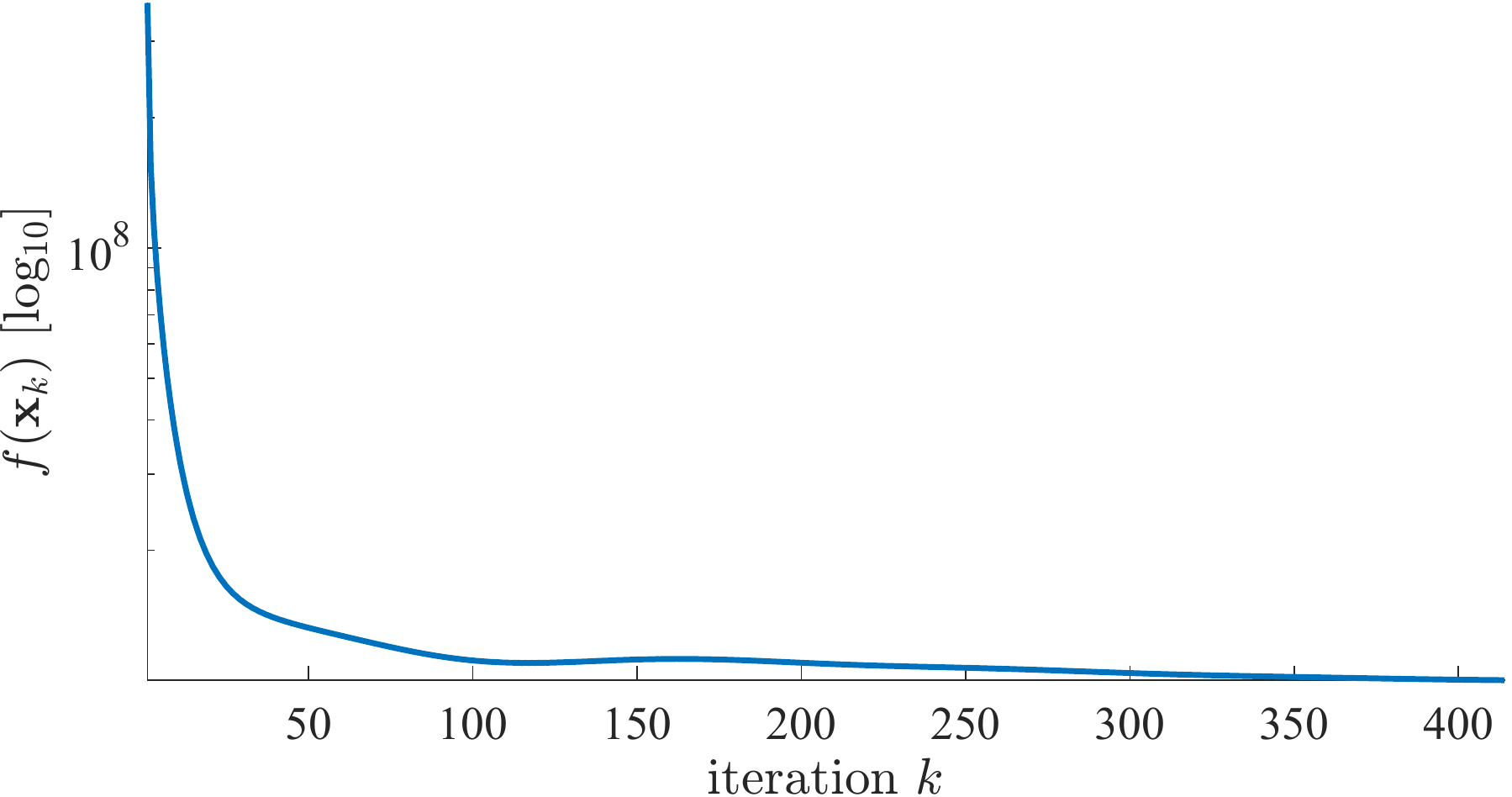}
    \caption{The left graph shows the relative error  of the reconstructions $\bfx_k$ at each iteration of {\tt vpal} of the 3D tomography problem. The right graph shows the corresponding objective function value $f(\bfx_k)$ at each iteration. Note that with this under-determined $\bfA$ the relative error is monotonically decreasing, while the objective function value is not monotonically decreasing.}
    \label{fig:experiment3dtomo_relerr}
\end{figure}

Reconstruction results are further illustrated in \cref{fig:experiment3dtomo_xvpal,fig:experiment3dtomo_relerr}, where \cref{fig:experiment3dtomo_xvpal} shows  the absolute error image, and \cref{fig:experiment3dtomo_relerr} depicts the relative reconstruction errors $\bfx_k$ of {\tt vpal} and also the corresponding objective function value $f(\bfx_k)$ at each iteration. We notice fast but not necessarily monotonically decreasing objective function values $f(\bfx_k)$.

\subsection{Experiments on parameters}\label{sec:regpara}
In the following we discuss experiments with our approach for regularization parameter selection using the degrees of freedom argument. As a test bed we investigate a deblurring ({\tt blur}), 2D medical tomography ({\tt tomo}), and seismic ({\tt seismic}) inversion problem. For each problem we utilize the Matlab Toolbox {\tt IRtools}; see \cite{gazzola2019ir} for further details on our choices. Here, in all cases we select Gaussian noise, and for {\tt blur} we use a {\tt severe shake blur} for the Hubble space telescope,  for {\tt tomo} we use the 2D Shepp-Logan phantom, while for {\tt seismic} we select the {\tt tectonic} phantom. In all usages of the {\tt vpal} algorithm we select a stopping tolerance of $\tau = 10^{-4}$ and limit the number of outer iterations to $1,\!000$.  For the bisection algorithm we set $\tau_1 = 0.01$, $\tau_2 = 0.02$,  limit the number of bisection steps to $10$, and always use the safety parameter  $\eta=1$; no confidence interval is used. Note that each bisection is a full solve of {\tt vpal} and thus the total maximum number of solves could be as high as $10,\!000$ with these settings. There is a trade-off on the accuracy of the bisection desired and the potential for high computational overhead by carrying out the bisection.

We investigate each of these experiments for various image and data sizes. Our results are presented for four problem sizes with $N$ ranging from $64$ to $512$, corresponding to image sizes $n=N^2$ ($4,\!096$, $16,\!384$, $65,\!536$, and $262,\!144$) for problem {\tt blur}, {\tt tomo} and {\tt seismic}. The size of the data vector $\bfb$ varies with the application, where $m=n$ for {\tt blur}, $m=16,\!380$, $32,\!580$, $65,\!160$, and $130,\!320$ for {\tt tomo} and $m=8,\!192$, $32,\!768$, $131,\!072$, and $524,\!288$ for {\tt seismic}. Notice these experiments correspond to $m=n$ for all the {\tt blur} cases, $m>n$ for {\tt seismic} (over-determined) and with {\tt tomo} $m<n$ (under-determined) for the two larger experiments. In each of these experiments we investigate white noise levels of $10\%$ and $20\%$, corresponding to SNR of $20$ and $13.98$ in each case.

In \cref{fig:experimentmu_deblur1,fig:experimentmu_tomo1,fig:experimentmu_seismic1} we evaluate the choice of $\mu$ determined using the $\chi^2$-DF test to give $\muopt$ as compared to that obtained as the MAP estimator $\mumap$. To find $\muopt$ we pick a value for the shrinkage parameter $\gamma$ and estimate $\muopt$ by the $\chi^2$-DF~test. (Results show that the $\muopt$ is virtually independent of $\gamma$.)  The relative errors for the solutions obtained in this way are indicated on the plots using the solid red circles with the legend $(\muopt, \gamma)$. We also show the values of the relative error that are obtained by taking $\muopt$ and $\mumap$, and using the DP to find an optimal $\lambda$, denoted $\lambda_{\textrm{opt}}$ and $\lambda_{\textrm{map}}$ respectively. For the DP we again use the bisection algorithm with all the same settings but now for the function $H(\lambda)= \tfrac{\norm[2]{\bfA\bfx(\lambda) - \bfb}^2}{m \sigma^2} -1$. The blue solid circles and green open circles show the  errors calculated using the pairs  $(\mumap, \lambda_{\textrm{map}})$ and $(\muopt, \lambda_{\textrm{opt}})$, respectively, that are generated by $\mumap$ and $\muopt$,  with  the DP  to find the relevant $\lambda$. Then, given the respective $(\mu,\lambda)$ pairs, we fix $\mu$ but use the $\lambda$ value to provide a range of $\lambda$ logarithmically spaced between  $\lambda_{\textrm{map}}/100$ and $100 \lambda_{\textrm{map}}$ at $50$ points, with $\lambda_{\textrm{opt}}$ for the $\muopt$ case. The resulting relative errors  obtained for the range of $\lambda$, with fixed $\mumap$ and $\muopt$, are given in the two curves, blue and red, respectively. In these figures the oscillations in the relative error curves occur if the algorithm did not converge within $1,\!000$ iterations, which is more prevalent for small values of $\lambda$.  Given that $\mu$ is fixed on these curves, this corresponds to taking larger values of the shrinkage parameter $\gamma$. Flat portions of the curves indicate the relative lack of sensitivity to the choice of $\lambda$ (respectively $\gamma$) for a fixed $\mu$, equivalently confirming that the optimal $\mu$ is largely independent of  $\gamma$ within a suitably determined range, dependent on the data and the problem.

We see immediately that  finding $\muopt$ by the $\chi^2$-DF test yields smaller relative errors  in the solutions than when the solution is generated using $\mumap$, open green circles and solid red circles are lower than solid blue circles, and except where there are issues with convergence, the red curves lie below blue curves. This is notwithstanding that the $\chi^2$-DF test and the MAP estimators do not immediately apply for the {\tt tomo}  and {\tt seismic} problems, since neither meets the criterion of being differentially Laplacian natural images. Indeed, the Hubble space telescope image is not a perfect example of such an image, but the results demonstrate that the approach still works reasonably well. On the other hand, comparing now red solid and green open circles, contrasts the impact of finding a $\muopt$ by the $\chi^2$-DF test (red solid)  and then assessing whether the standard DP (green open) on $\lambda$ might be a better option. It is particularly interesting that the $\chi^2$-DF test does uniformly well on {\tt tomo} and {\tt seismic} problems, but there are a few cases with {\tt blur} in which the DP finds a $\lambda$ that yields a smaller relative error. Even in these cases, the results are good using the $\chi^2$-DF result. In all situations it is clear that we would not expect to find an optimal $\lambda$ that is at the minimum point of the respective relative error curve, but in general the results are acceptably close to these minimum points. Overall, these results support the approach in which we pick a shrinkage parameter $\gamma$ and find $\muopt$ using the $\chi^2$-DF test by bisection. The results presented for the DP approach to find $\lambda$ were provided to contrast the two directions for estimating the parameters. Indeed, it is clear that finding a $\mu$ to fit $F(\mu)$ given by \cref{eq:dof} to $m\sigma^2$ and then fitting the residual term there alone, also to $m\sigma^2$, by applying DP, will necessarily increase the value of $F(\mu)$. This set of results demonstrates that there is no need to use the DP principle; rather, optimizing based on \cref{eq:dofmead} is appropriate for all the test problems.

To further assess the validity of this assertion we also performed a parameter sweep over a grid of values for $(\mu, \lambda)$ for the experiments that use $10\%$ noise, corresponding to an SNR of $20$. For each point on the grid we calculated both the relative error $e(\bfx(\mu,\lambda))$ and  the $\chi^2$ value, $|F(\mu,\lambda)/m\sigma^2 -1|$. The left panels and right panels in \cref{fig:blur_lambda_mu,fig:tomo_lambda_mu,fig:seismic_lambda_mu} show the contour plots for the relative errors and $\chi^2$ estimates, respectively. The red dots correspond to the points with minimum error and the black dots to the points with minimum $\chi^2$ value. If they are close we would assert that the $\chi^2$ is optimal for finding a good value for $\mu$.
In general, the contours are predominantly vertical, confirming that the solutions are less impacted by the choice of $\lambda$ (and hence shrinkage $\gamma$) than of $\mu$. Further, it is necessary to examine the values for the contours, given in the colorbar, in order to assess whether the $\chi^2$ is not giving a good solution. From \cref{fig:blur_lambda_mu,fig:tomo_lambda_mu} we can conclude that the difference in the relative error from using the $\chi^2$ estimate for $\mu$ rather than the optimal in terms of the minimal error is small; all values lie within the blue contours. Even for the {\tt seismic} case shown in \cref{fig:seismic_lambda_mu} the contour level for the relative error changes only from about $0.06$ to $0.09$. The captions give the actual calculated relative errors for the red and black dots.

\begin{figure}
    \centering
    \begin{tabular}{cccc}
         $n = 64^2$ & $n = 128^2$ & $n = 256^2$ & $n = 512^2$ \\
         \includegraphics[width=0.220\textwidth]{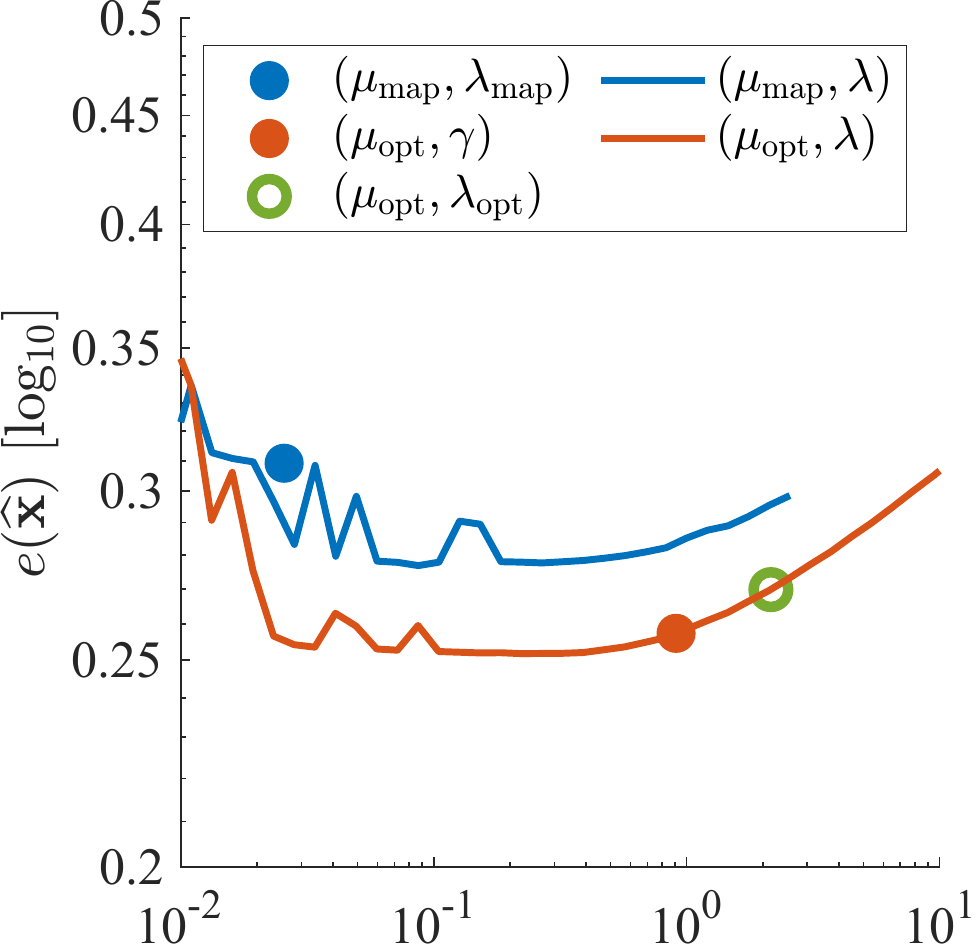} &
         \includegraphics[width=0.215\textwidth]{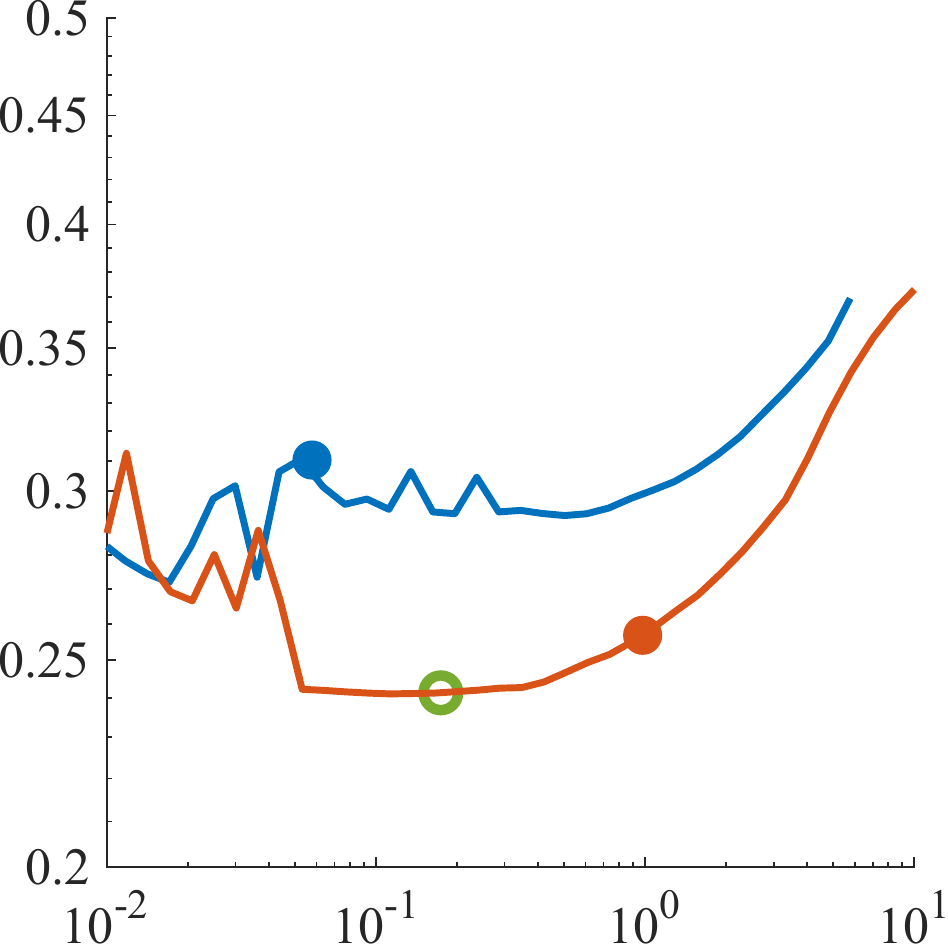} &
         \includegraphics[width=0.215\textwidth]{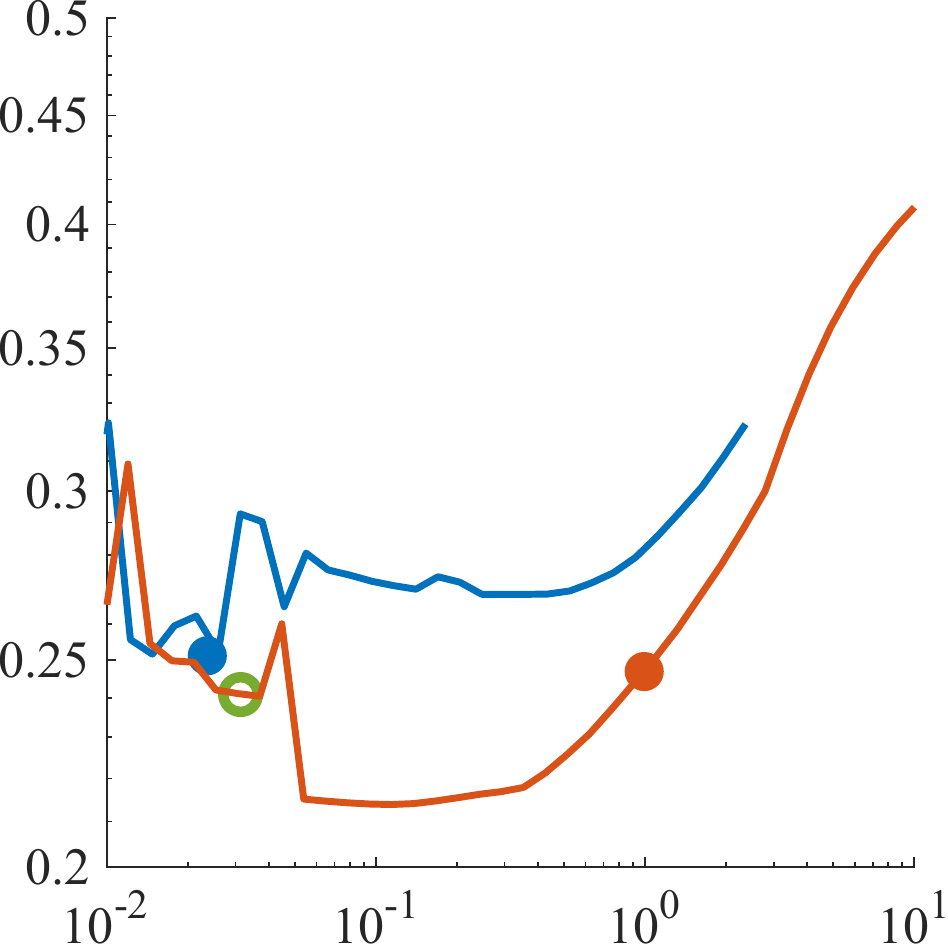} &
         \includegraphics[width=0.215\textwidth]{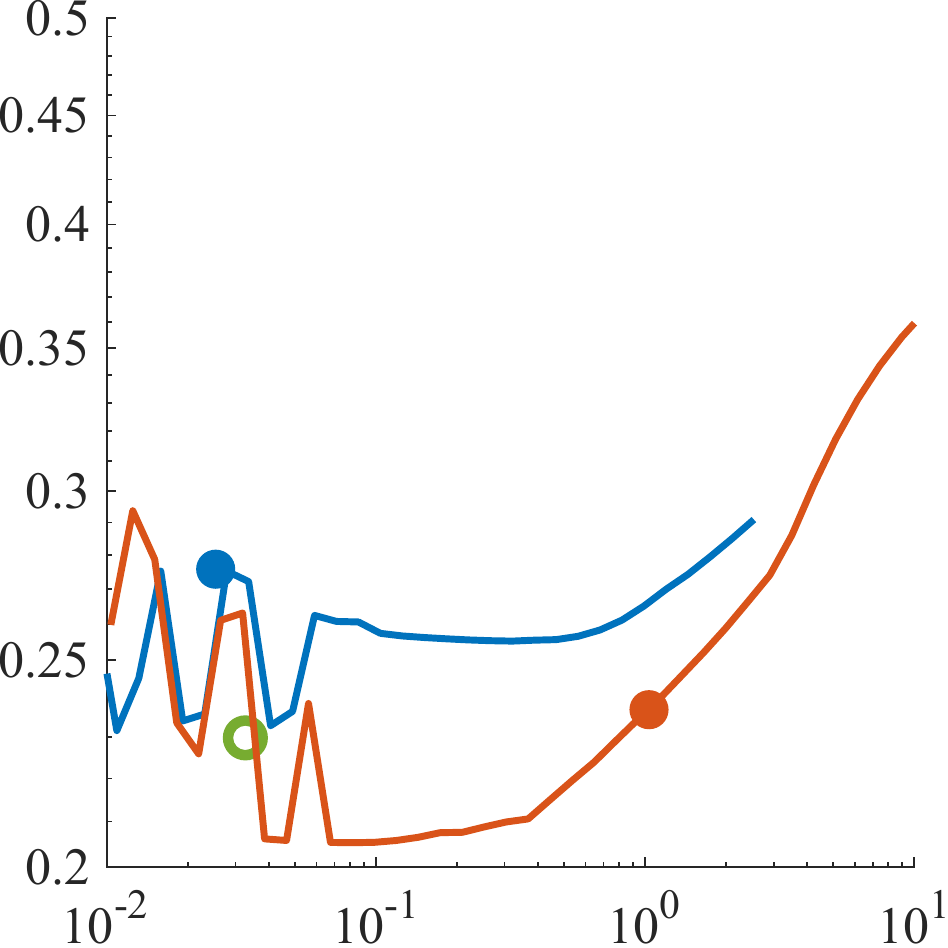} \\
         \includegraphics[width=0.220\textwidth]{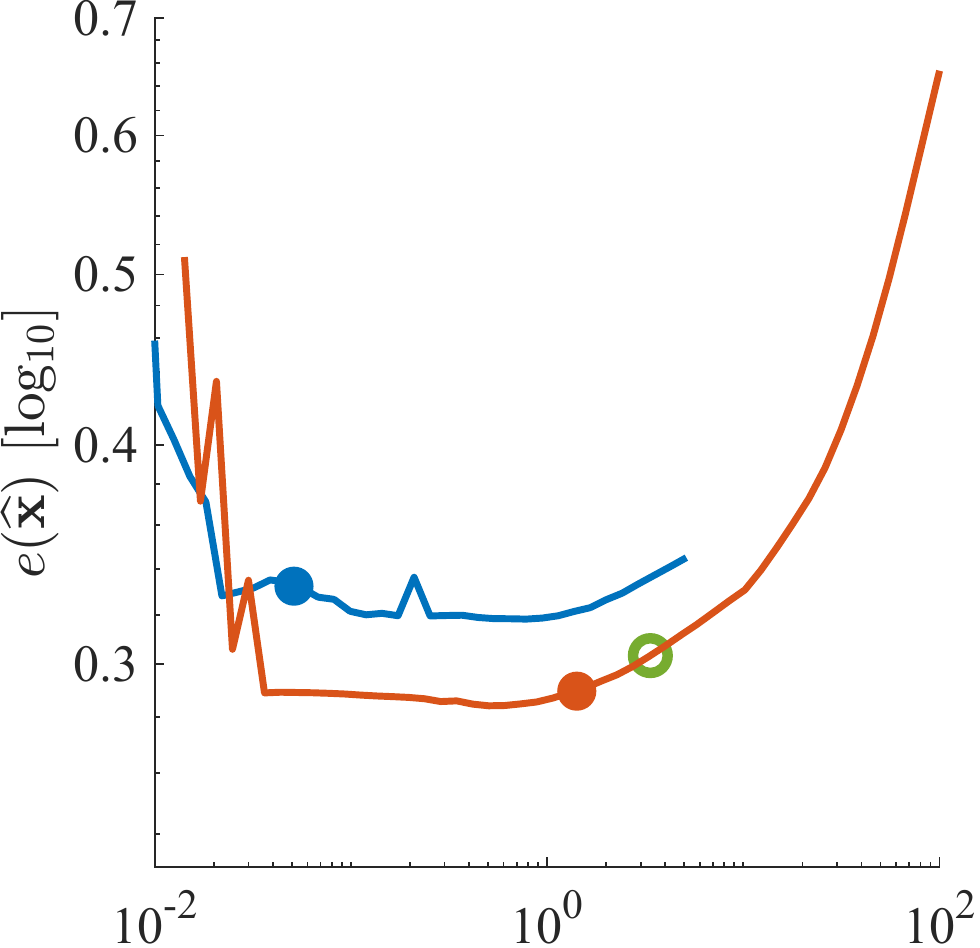} &
         \includegraphics[width=0.215\textwidth]{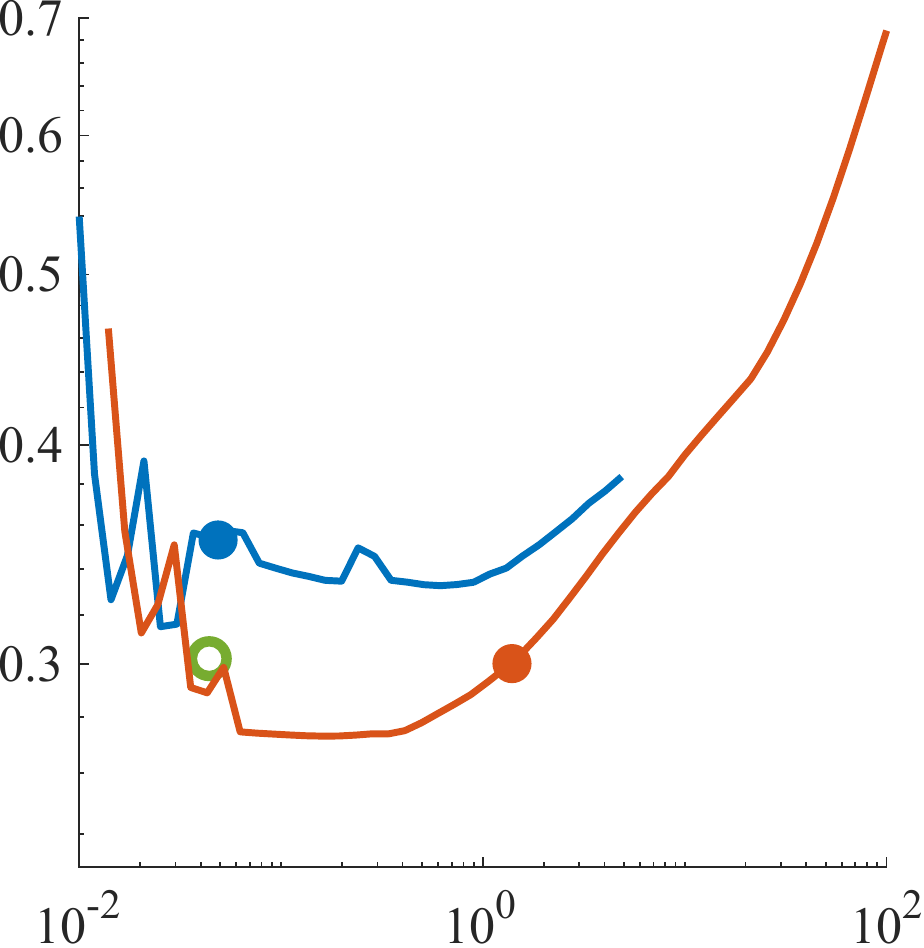} &
         \includegraphics[width=0.215\textwidth]{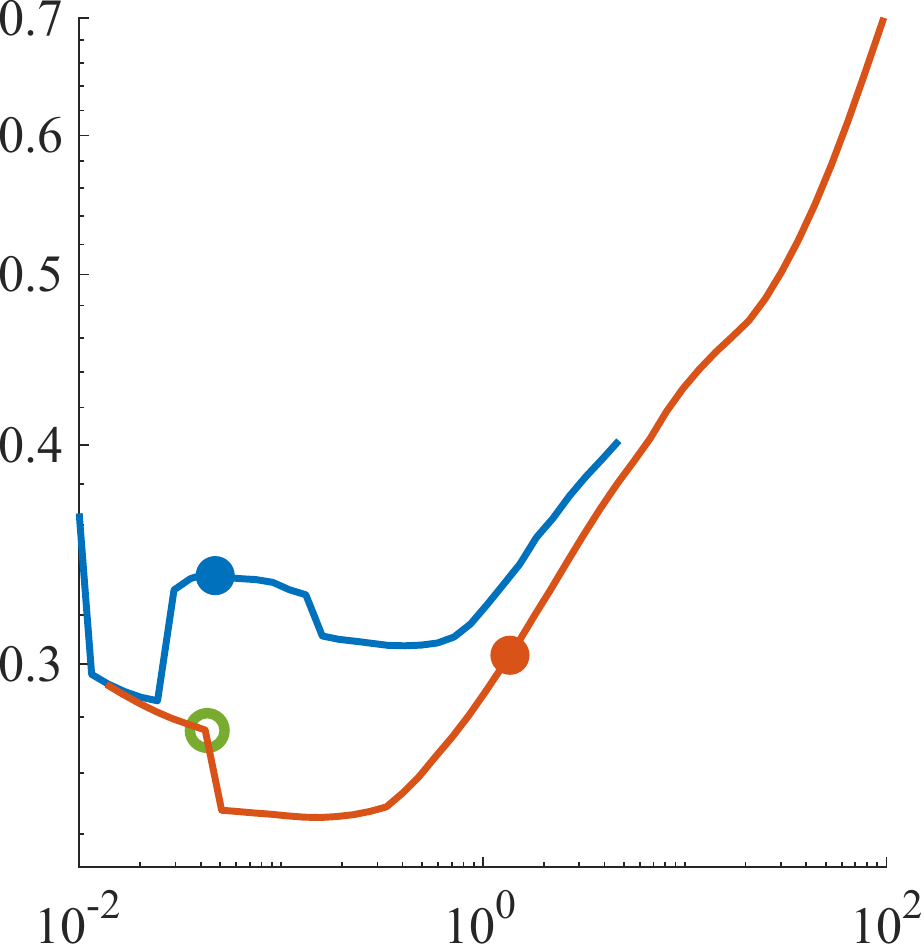} &
         \includegraphics[width=0.215\textwidth]{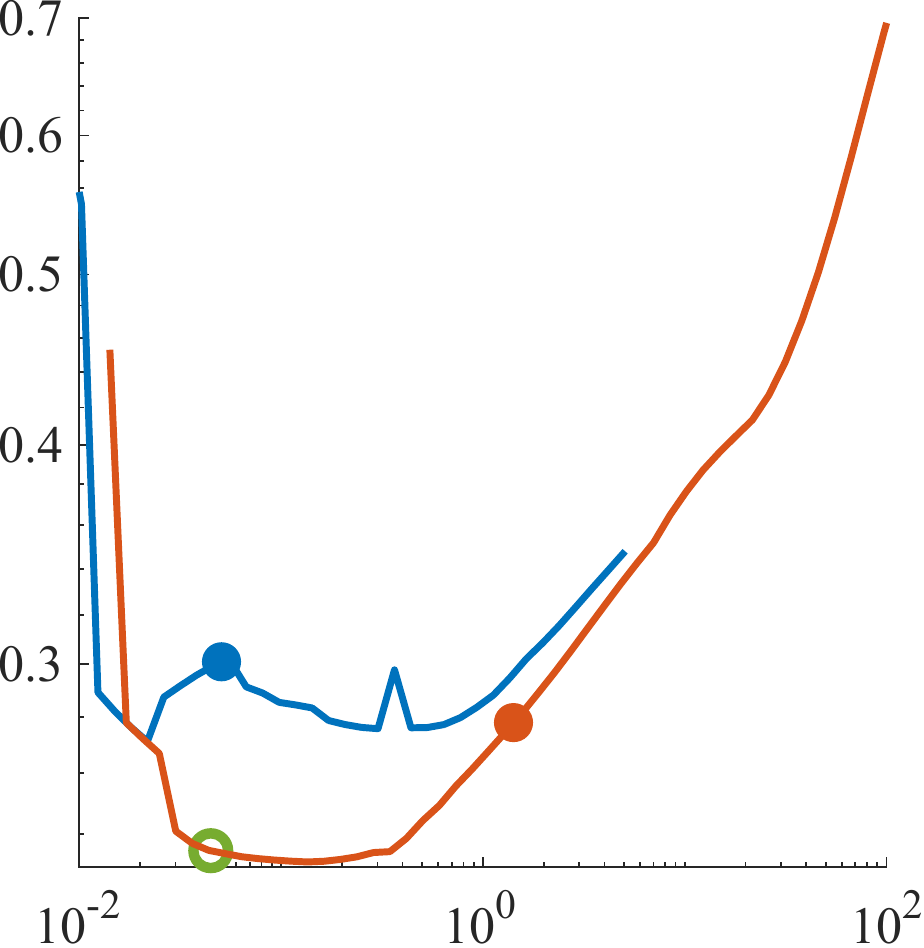} \\[-1.5ex]
         \tiny $\lambda$ [log$_{10}$] & \tiny $\lambda$ [log$_{10}$] & \tiny $\lambda$ [log$_{10}$] & \tiny $\lambda$ [log$_{10}$] \\
    \end{tabular}
    \caption{Results for {\tt blur} example for varying $\lambda$ and fixed regularization parameter $\mu$, $\mumap$ and $\muopt$ for the blue and red curves, respectively. Also marked for the given fixed $\mu$ choices are the optimal $\lambda$ found using the DP.  First row shows relative errors for different image sizes ($n = 64^2, 128^2, 256^2$, and $512^2$) with noise level of $10\%$ corresponding to a signal to noise ratio (SNR) of $20$. Second row shows relative errors with noise level of $20\%$ corresponding to an SNR of $13.98$.
    \label{fig:experimentmu_deblur1}}
\end{figure}

\begin{figure}
    \centering
    \begin{tabular}{cccc}
         $n = 64^2$ & $n = 128^2$ & $n = 256^2$ & $n = 512^2$ \\
         \includegraphics[width=0.226\textwidth]{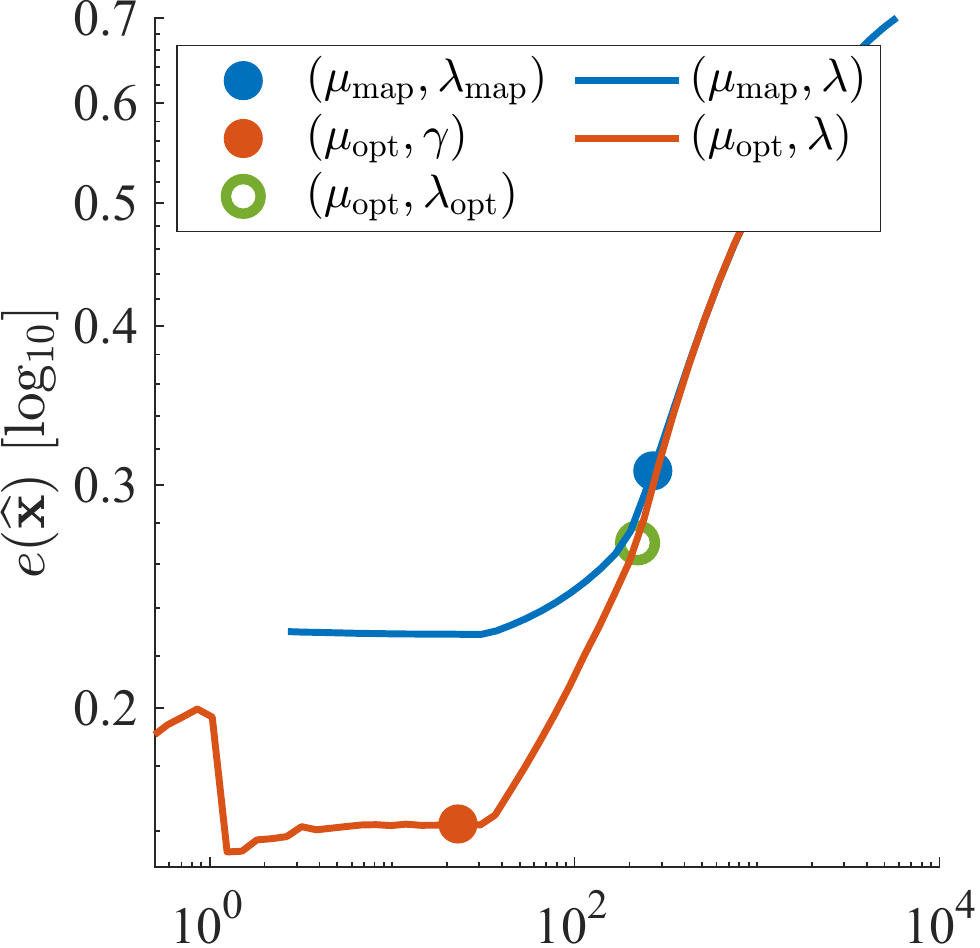} &
         \includegraphics[width=0.215\textwidth]{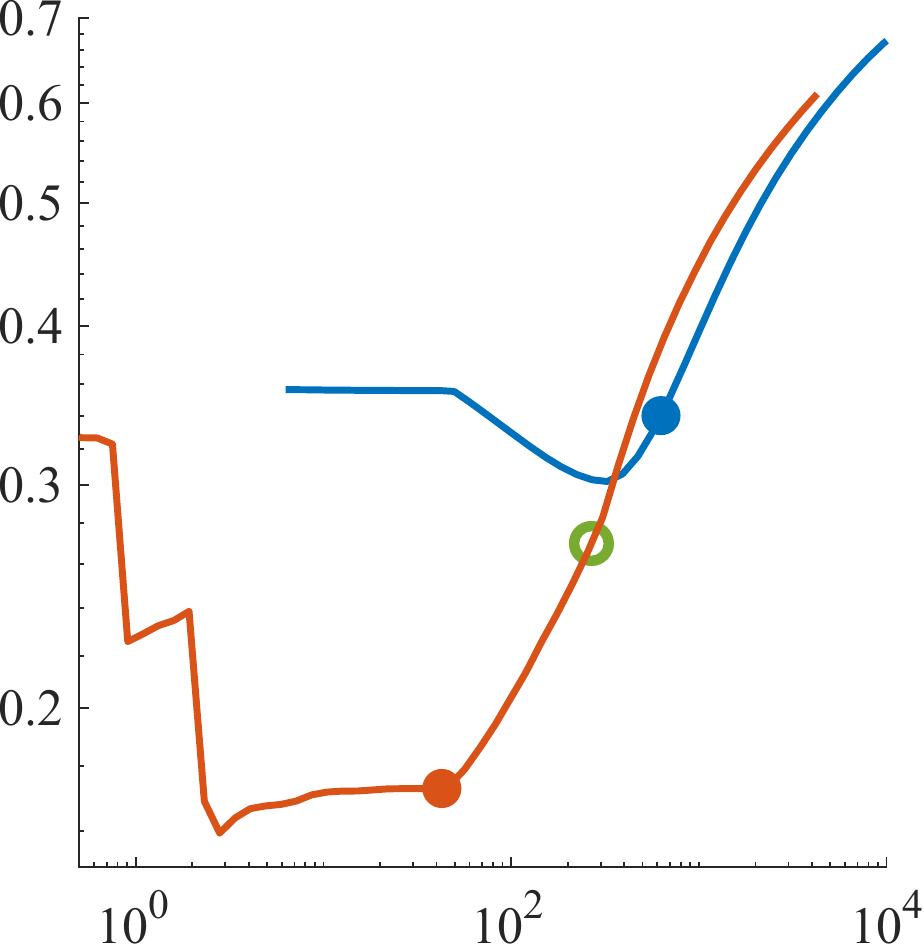} &
         \includegraphics[width=0.215\textwidth]{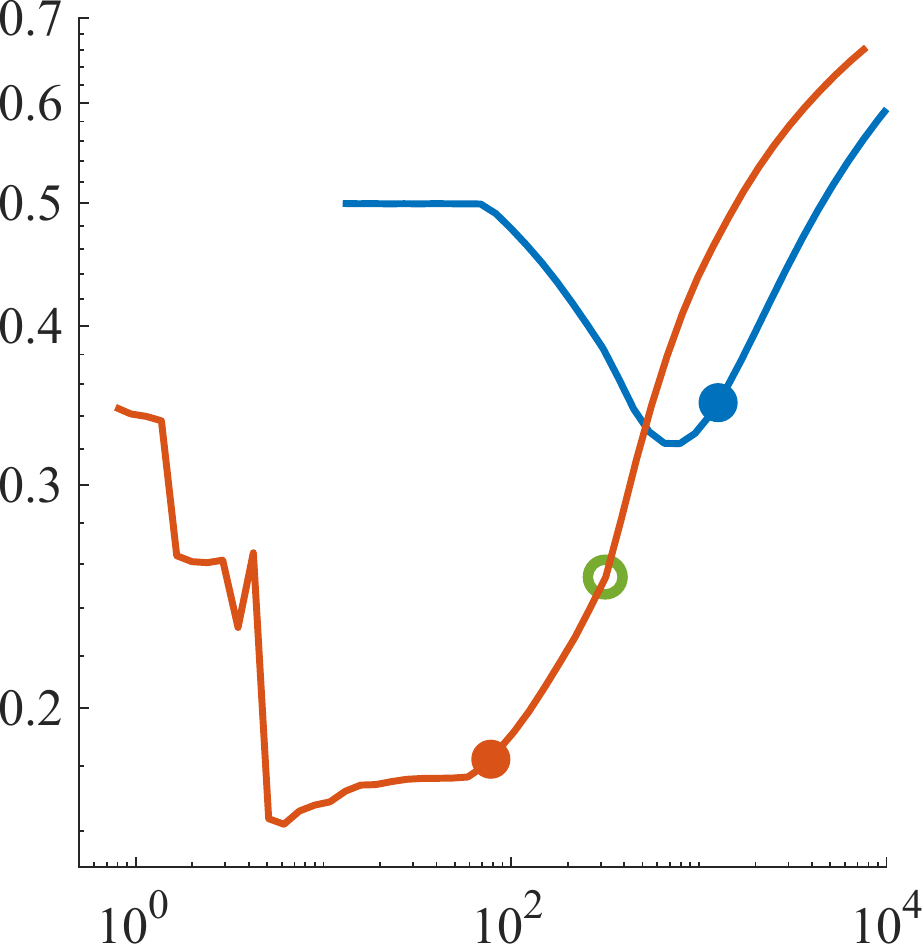} &
         \includegraphics[width=0.215\textwidth]{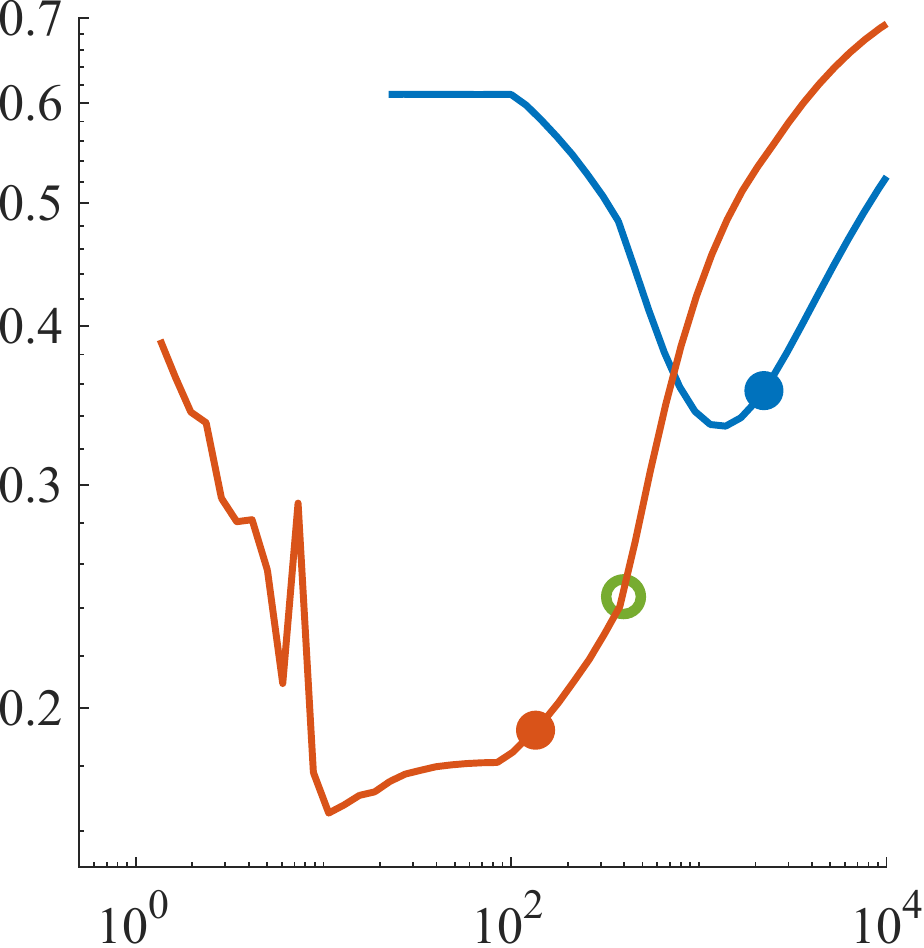} \\
         \includegraphics[width=0.226\textwidth]{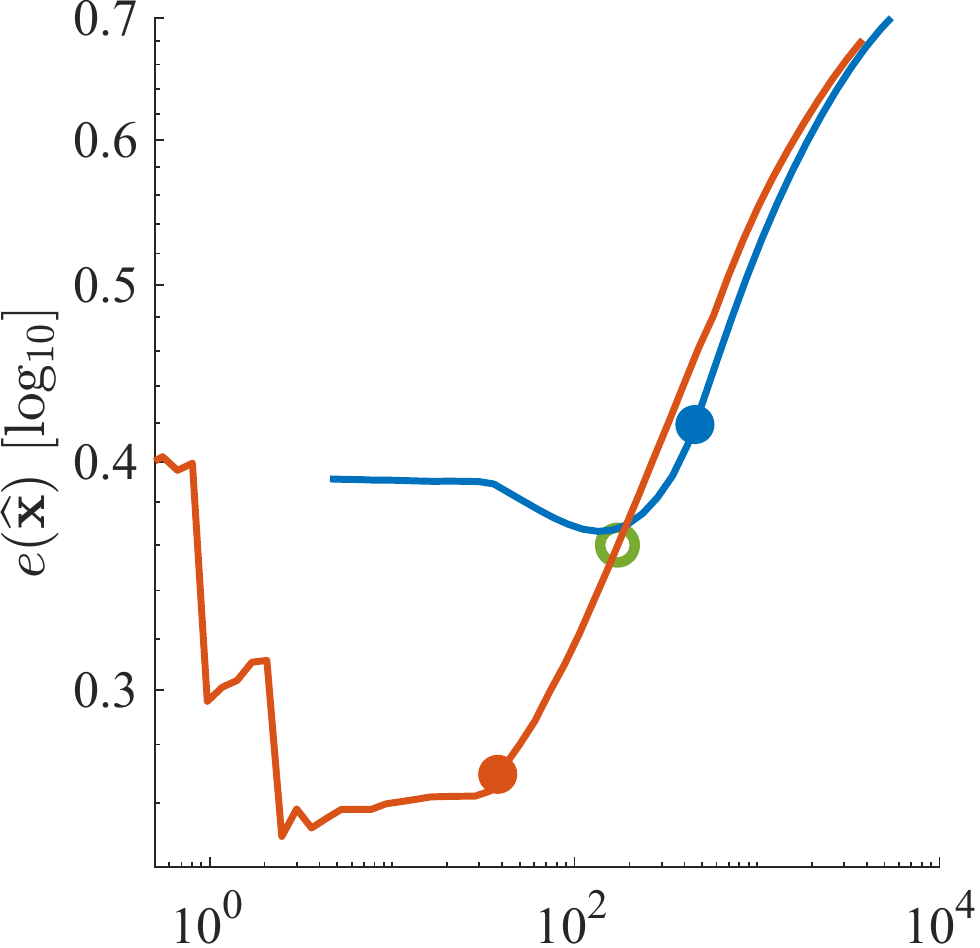} &
         \includegraphics[width=0.215\textwidth]{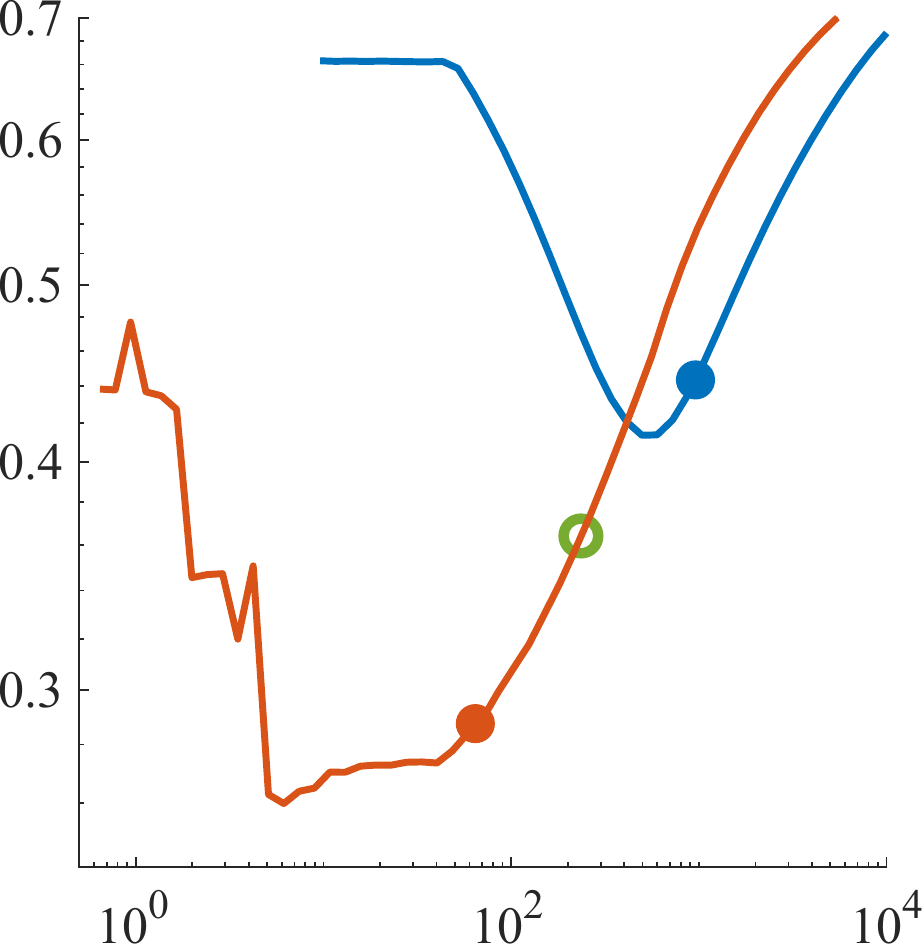} &
         \includegraphics[width=0.215\textwidth]{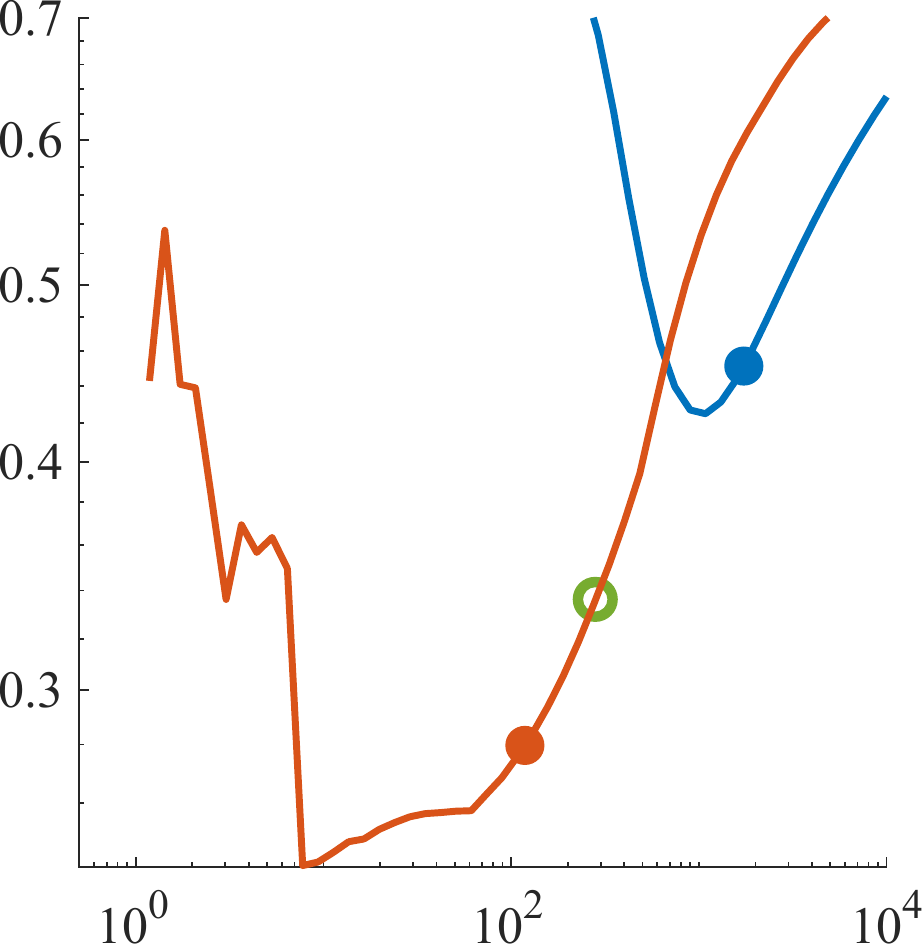} &
         \includegraphics[width=0.215\textwidth]{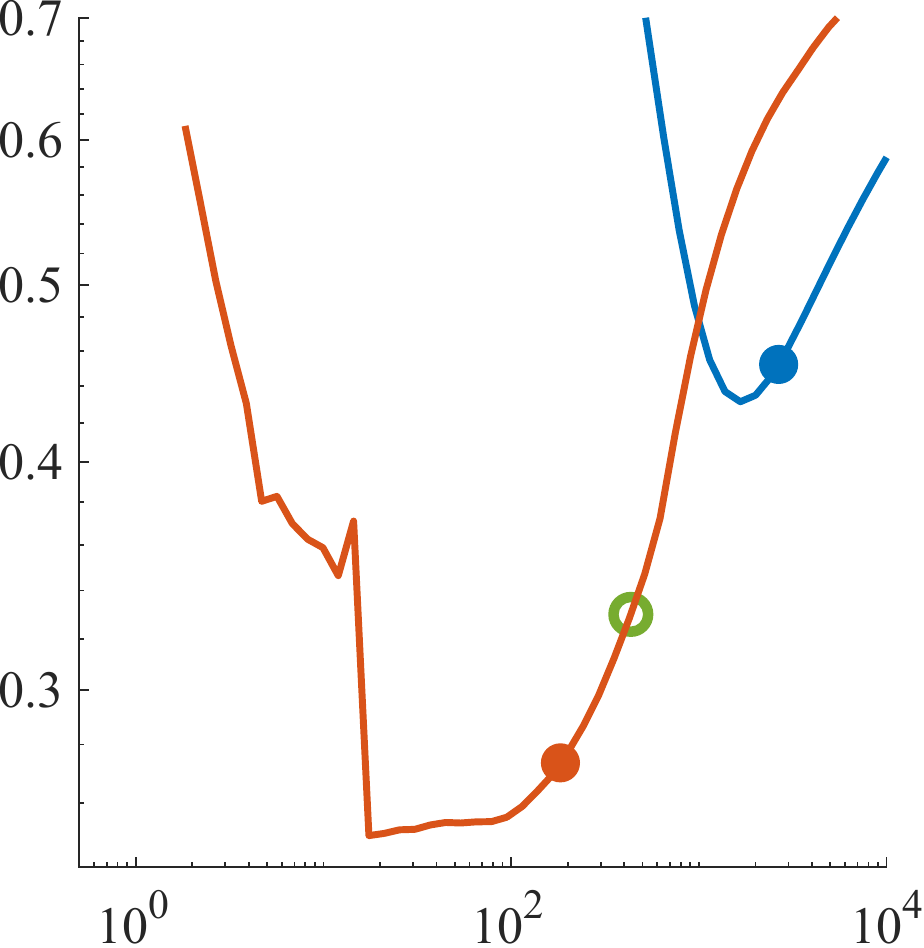} \\[-1.5ex]
         \tiny $\lambda$ [log$_{10}$] & \tiny $\lambda$ [log$_{10}$] & \tiny $\lambda$ [log$_{10}$]  & \tiny $\lambda$ [log$_{10}$] \\
    \end{tabular}
    \caption{Results for {\tt tomo} example for varying $\lambda$ and fixed regularization parameter $\mu$, $\mumap$ and $\muopt$ for the blue and red curves, respectively. Also marked for the given fixed $\mu$ choices are the optimal $\lambda$ found using the DP.  First row shows relative errors for different image sizes ($n = 64^2, 128^2, 256^2$, and $512^2$) with noise level of $10\%$ corresponding to a signal to noise ratio (SNR) of $20$. Second row shows relative errors with noise level of $20\%$ corresponding to an SNR of $13.98$.
    \label{fig:experimentmu_tomo1}}
\end{figure}

\begin{figure}
    \centering
    \begin{tabular}{cccc}
         $n = 64^2$ & $n = 128^2$ & $n = 256^2$ & $n = 512^2$ \\
         \includegraphics[width=0.226\textwidth]{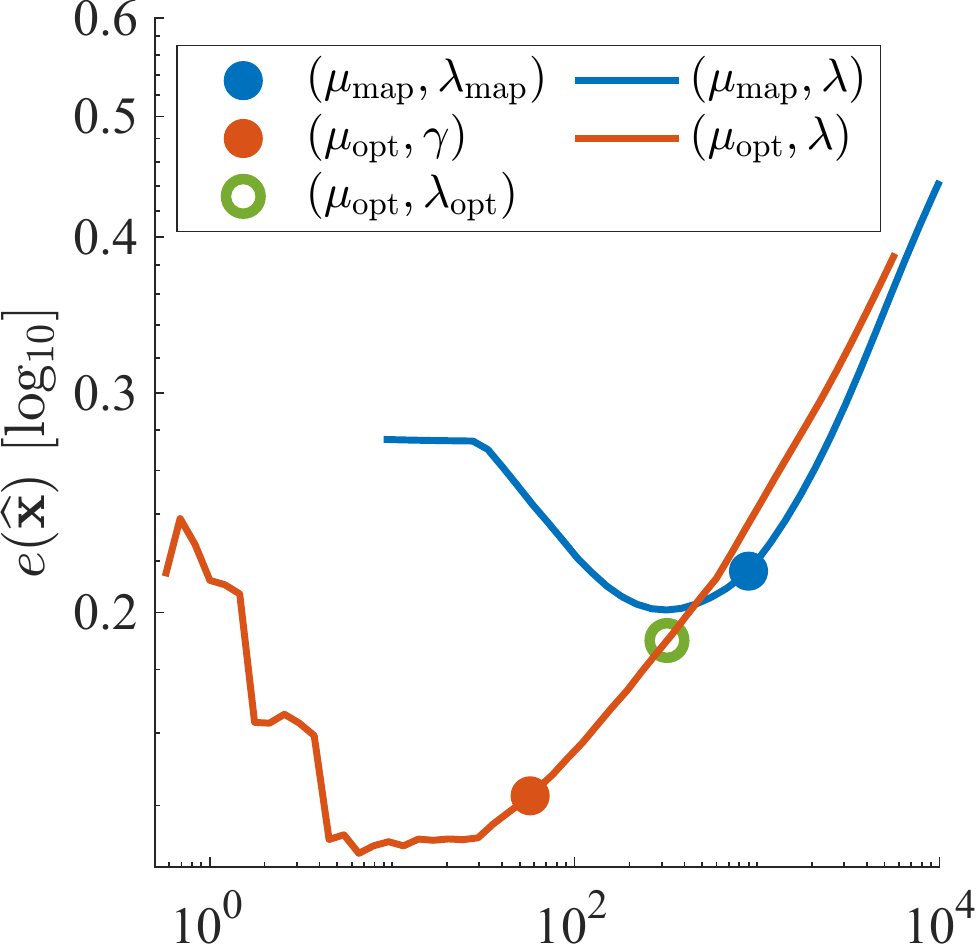} &
         \includegraphics[width=0.215\textwidth]{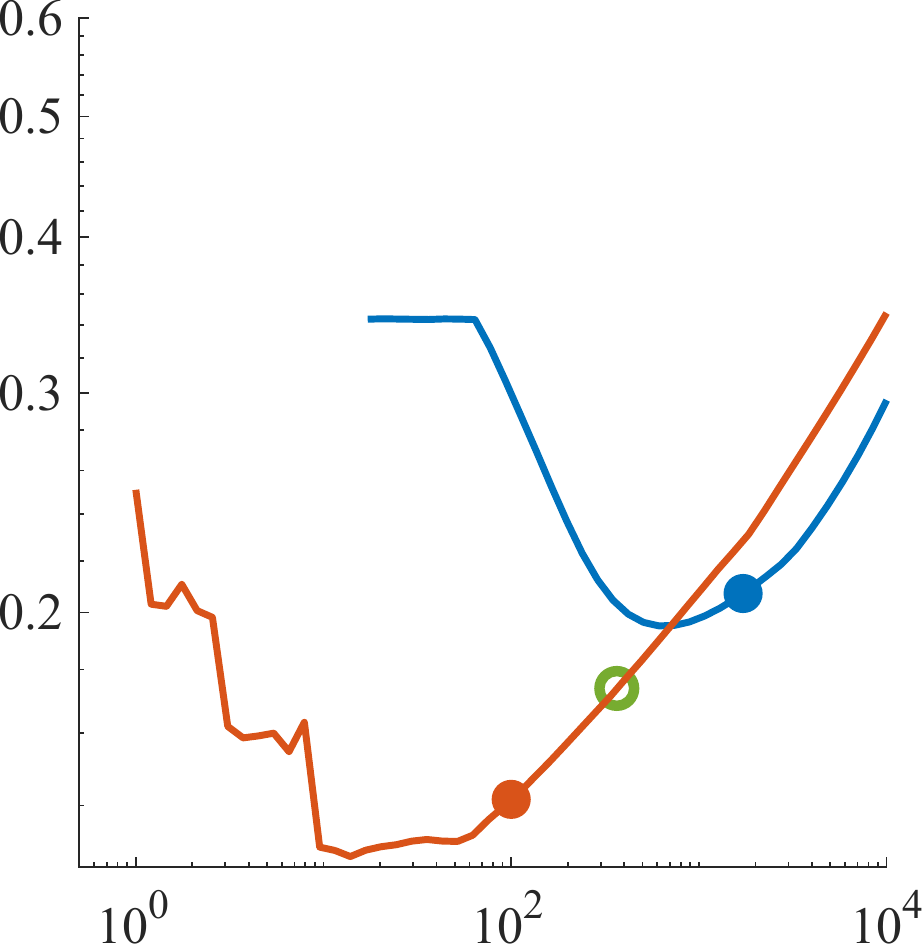} &
         \includegraphics[width=0.215\textwidth]{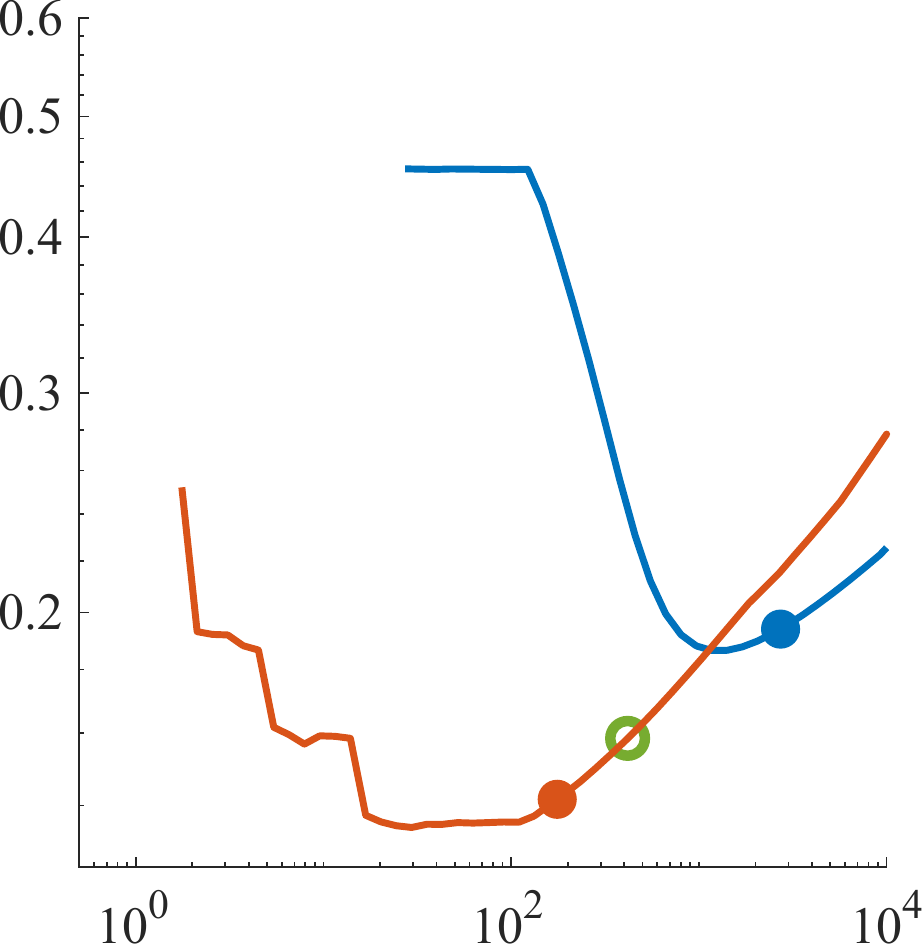} &
         \includegraphics[width=0.215\textwidth]{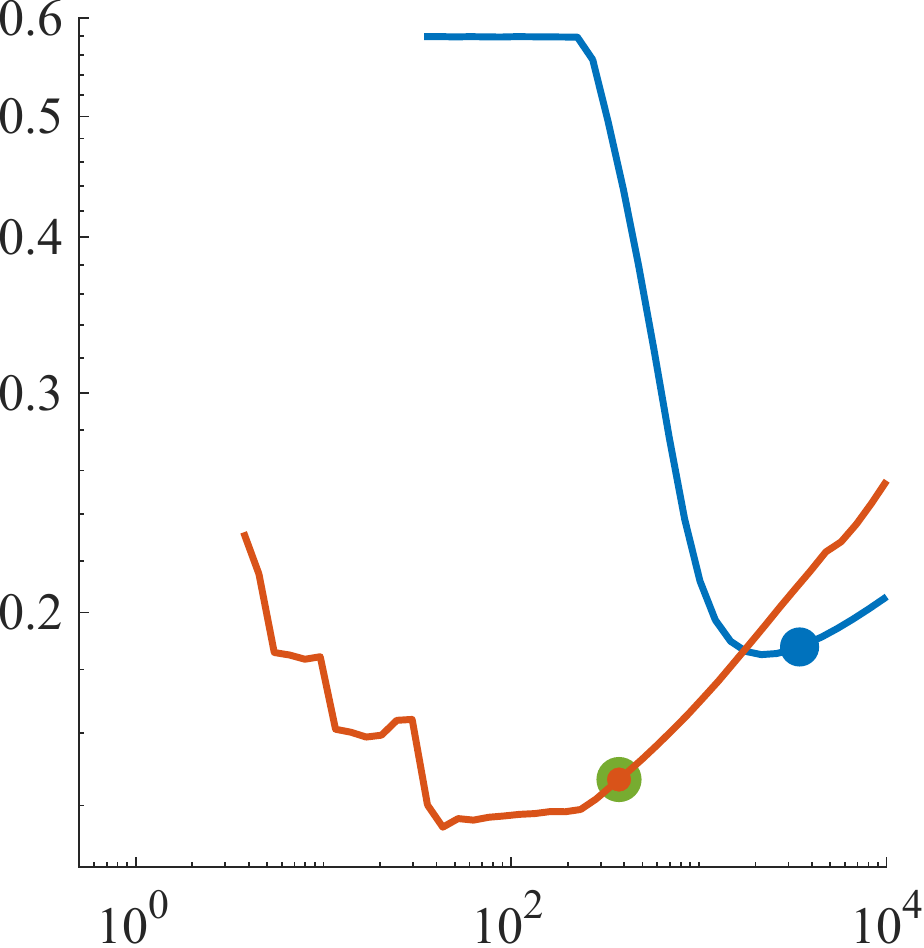} \\
         \includegraphics[width=0.226\textwidth]{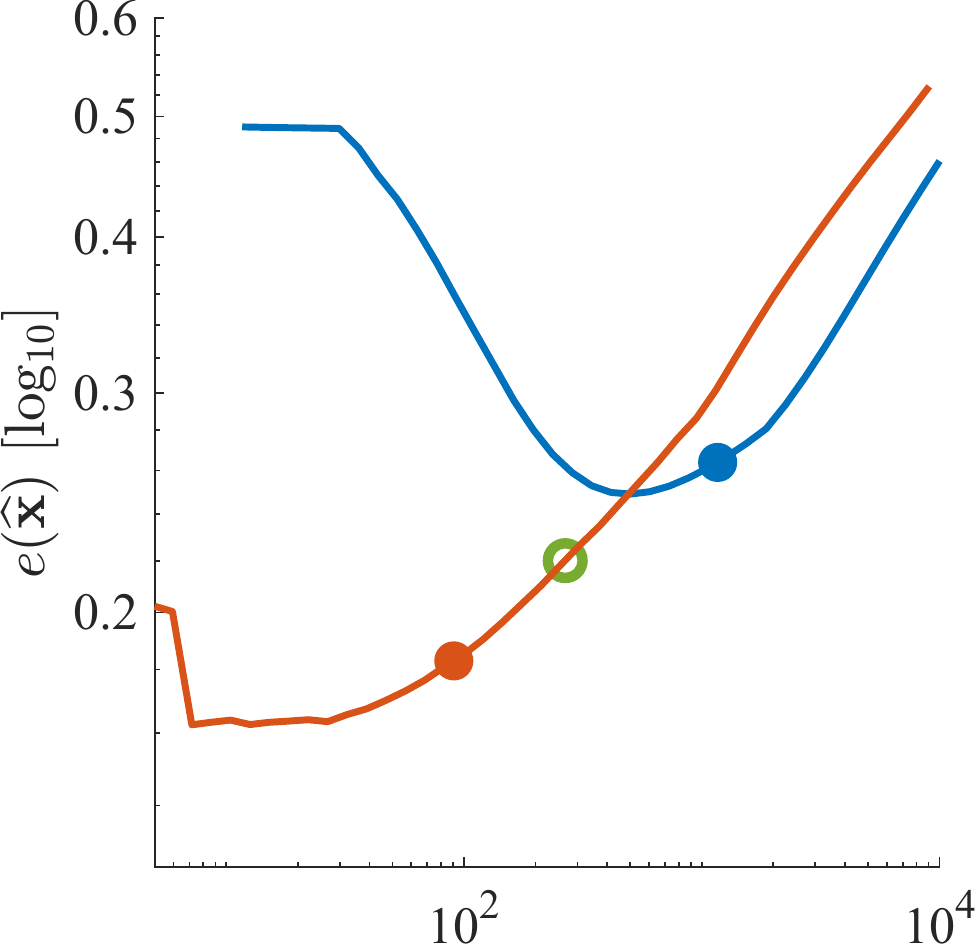} &
         \includegraphics[width=0.215\textwidth]{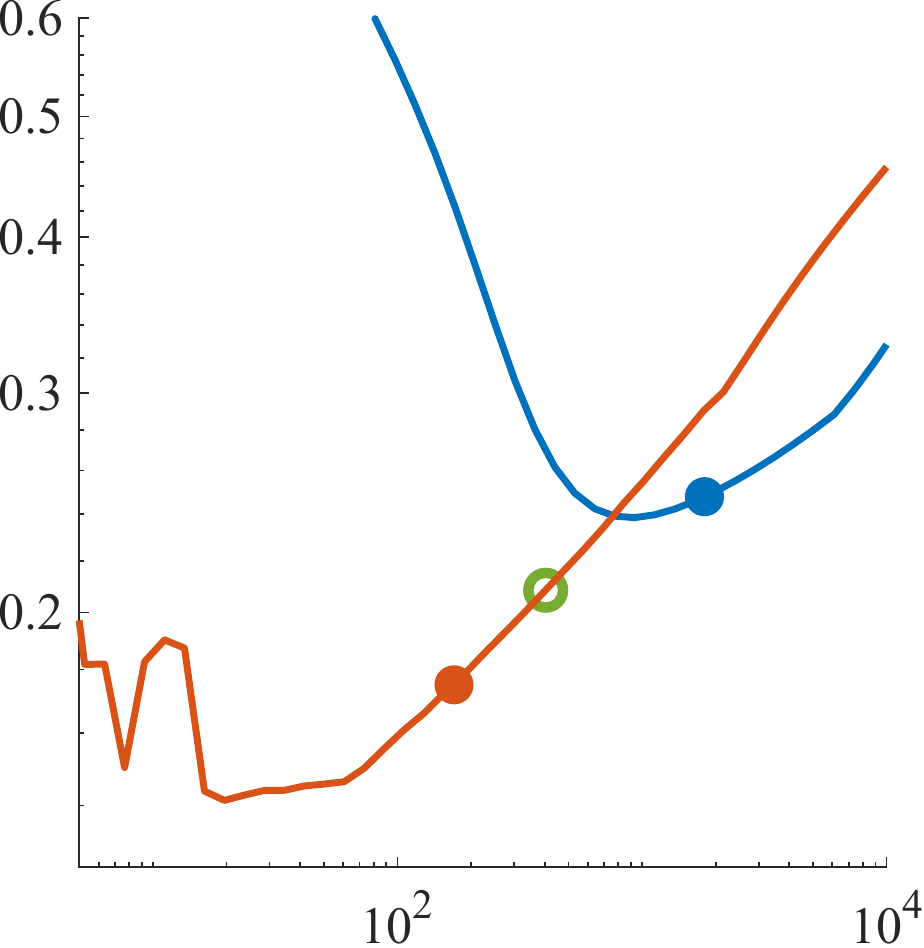} &
         \includegraphics[width=0.215\textwidth]{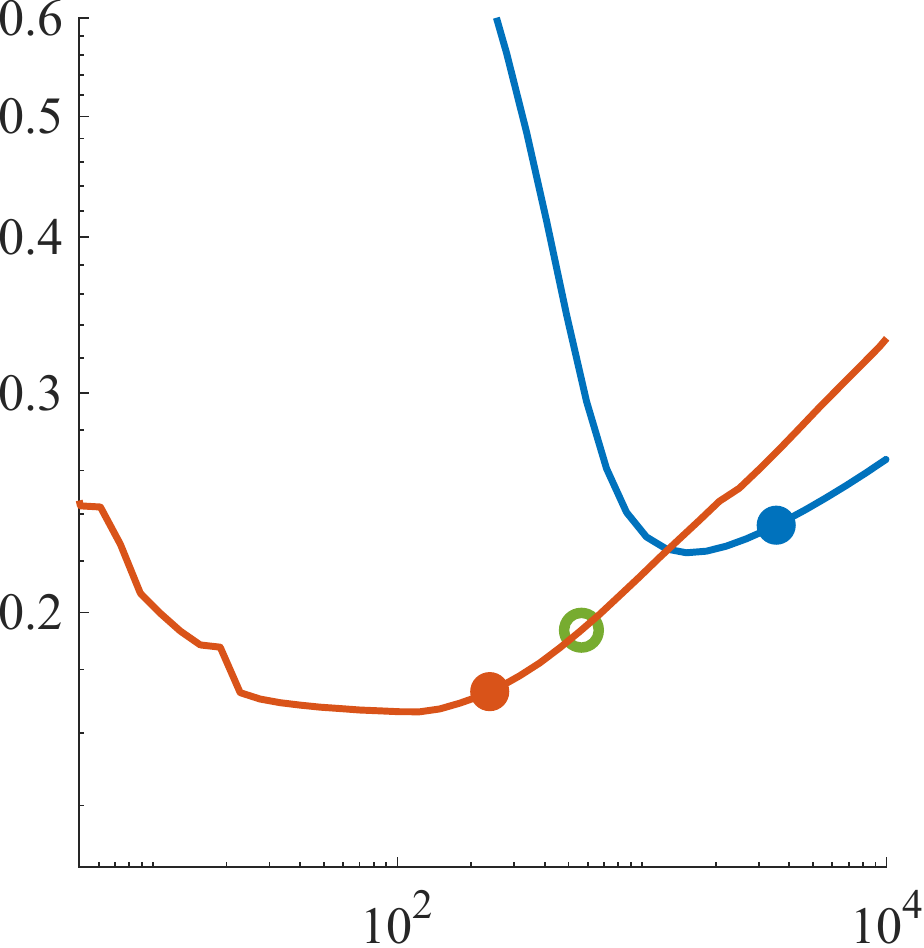} &
         \includegraphics[width=0.215\textwidth]{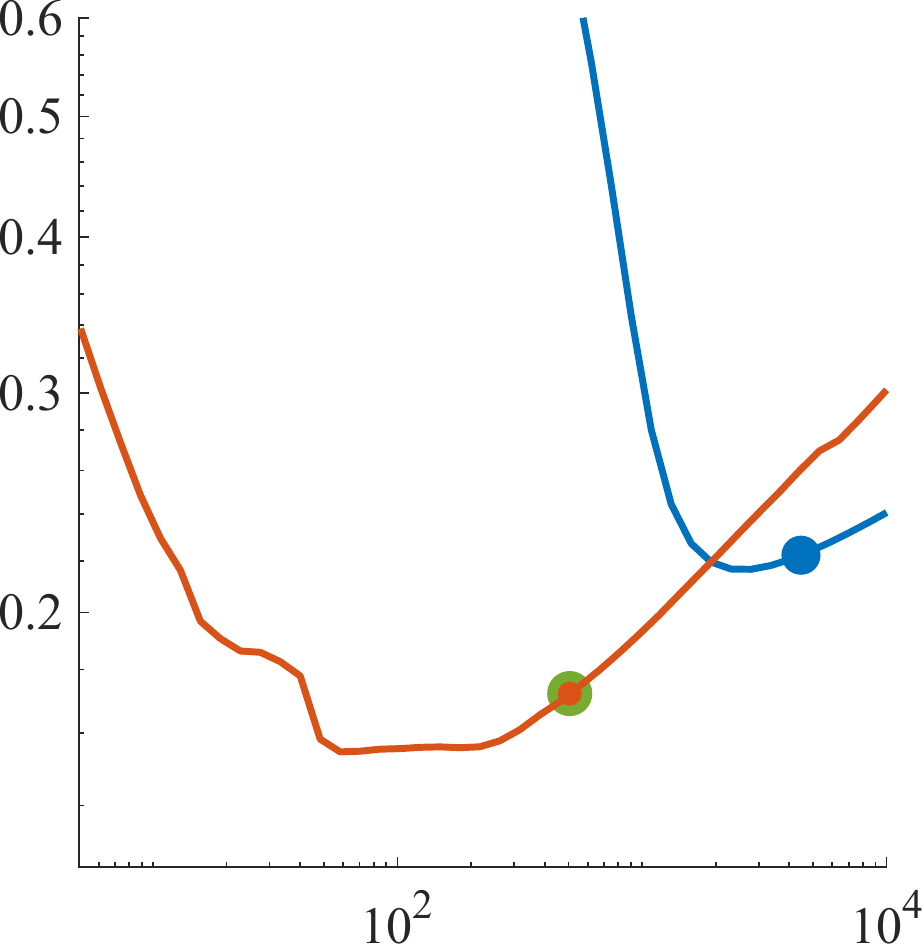} \\[-1.5ex]
         \tiny $\lambda$ [log$_{10}$] & \tiny $\lambda$ [log$_{10}$] & \tiny $\lambda$ [log$_{10}$]  & \tiny $\lambda$ [log$_{10}$] \\
    \end{tabular}
    \caption{Results for {\tt seismic} example for varying $\lambda$ and fixed regularization parameter $\mu$, $\mumap$ and $\muopt$ for the blue and red curves, respectively. Also marked for the given fixed $\mu$ choices are the optimal $\lambda$ found using the DP.  First row shows relative errors for different image sizes ($n = 64^2, 128^2, 256^2$, and $512^2$) with noise level of $10\%$ corresponding to a signal to noise ratio (SNR) of $20$. Second row shows relative errors with noise level of $20\%$ corresponding to an SNR  of $13.98$.
    \label{fig:experimentmu_seismic1}}
\end{figure}

\begin{figure}
    \centering
    \includegraphics[width=0.48\textwidth]{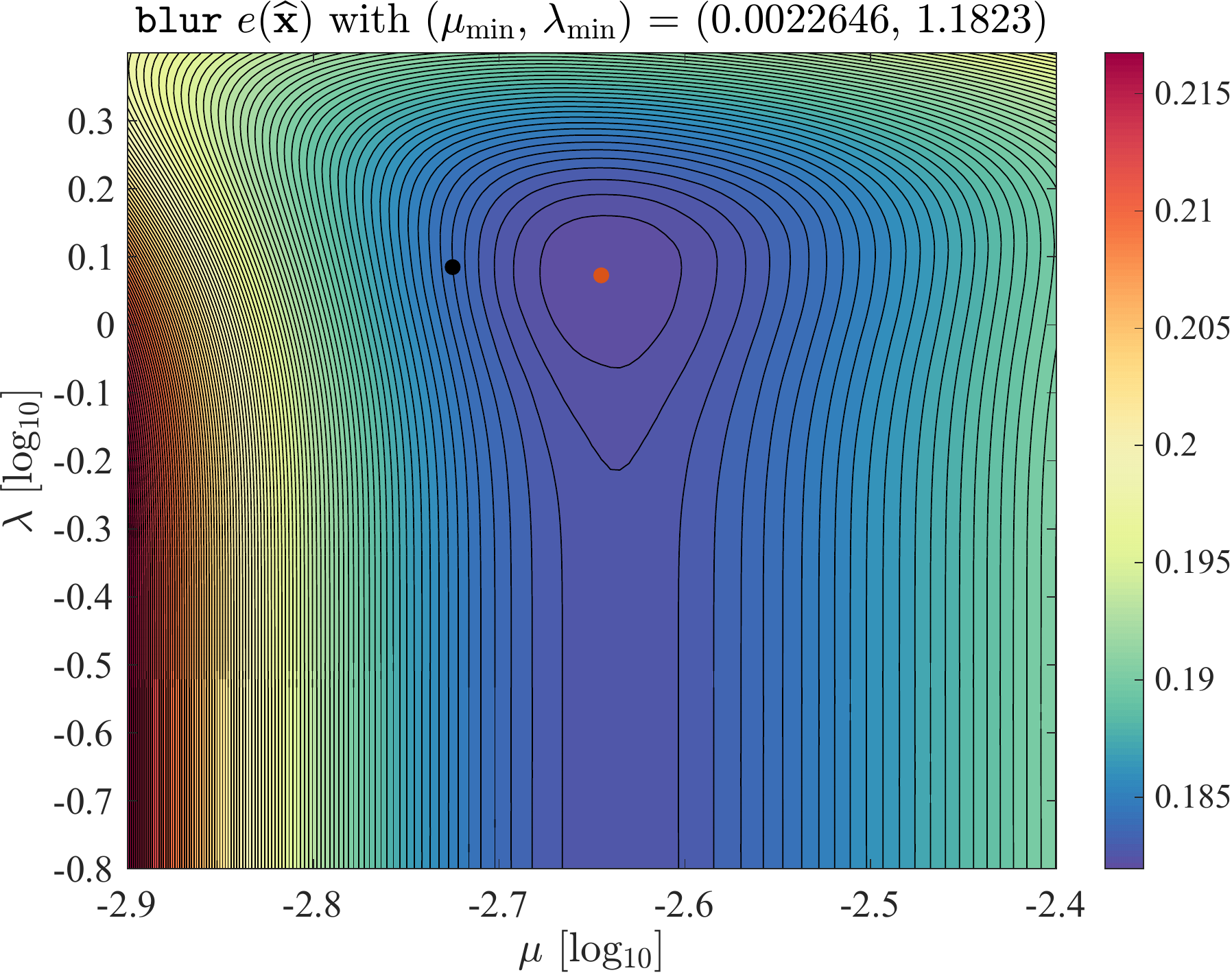}
    \includegraphics[width=0.48\textwidth]{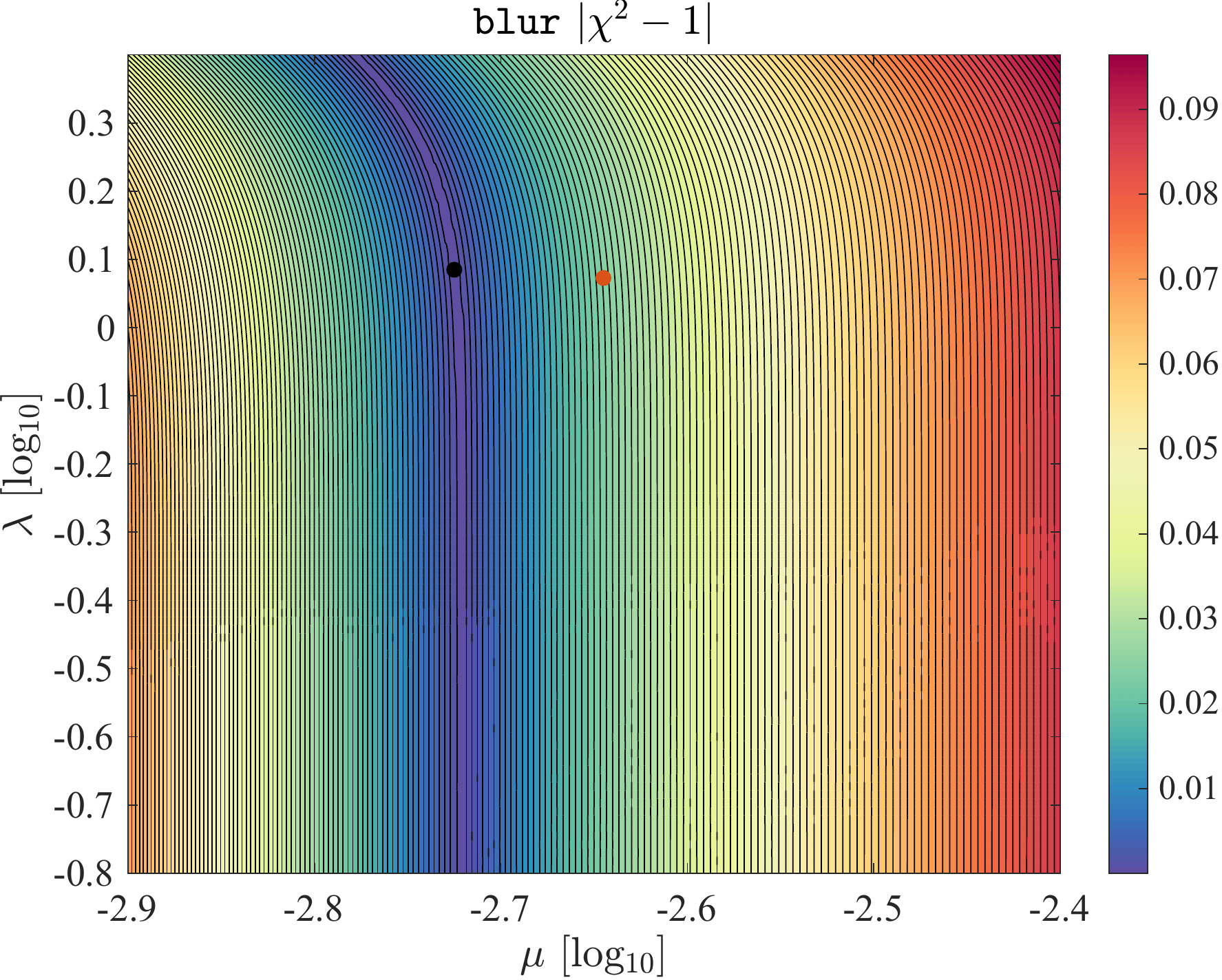}
 \caption{Problem {\tt blur}:  Calculating the relative error   and $\chi^2$ departure from $m\sigma^2$ for a logarithmically uniform grid of points $(\mu,\lambda)$. The minimum relative error is at the red dot and the minimum for the $\chi^2$ is at the black dot. The obtained minimum values for the relative errors at these points are $0.1820 $ and $0.1840$, and occur for shrinkage parameter $\gamma = 0.0016$, and $0.0013$  respectively.}
    \label{fig:blur_lambda_mu}
\end{figure}

\begin{figure}
    \centering
    \includegraphics[width=0.48\textwidth]{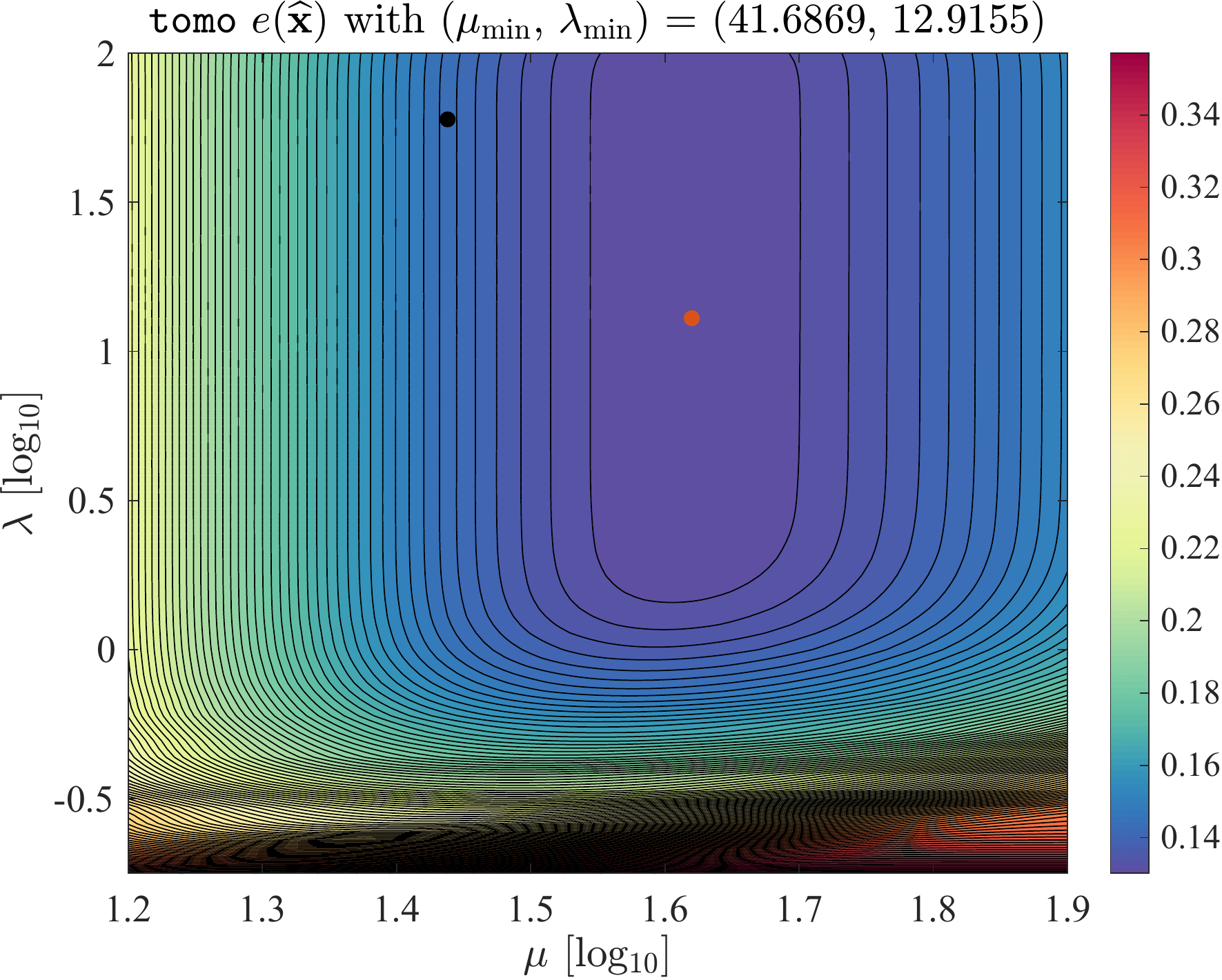}
    \includegraphics[width=0.48\textwidth]{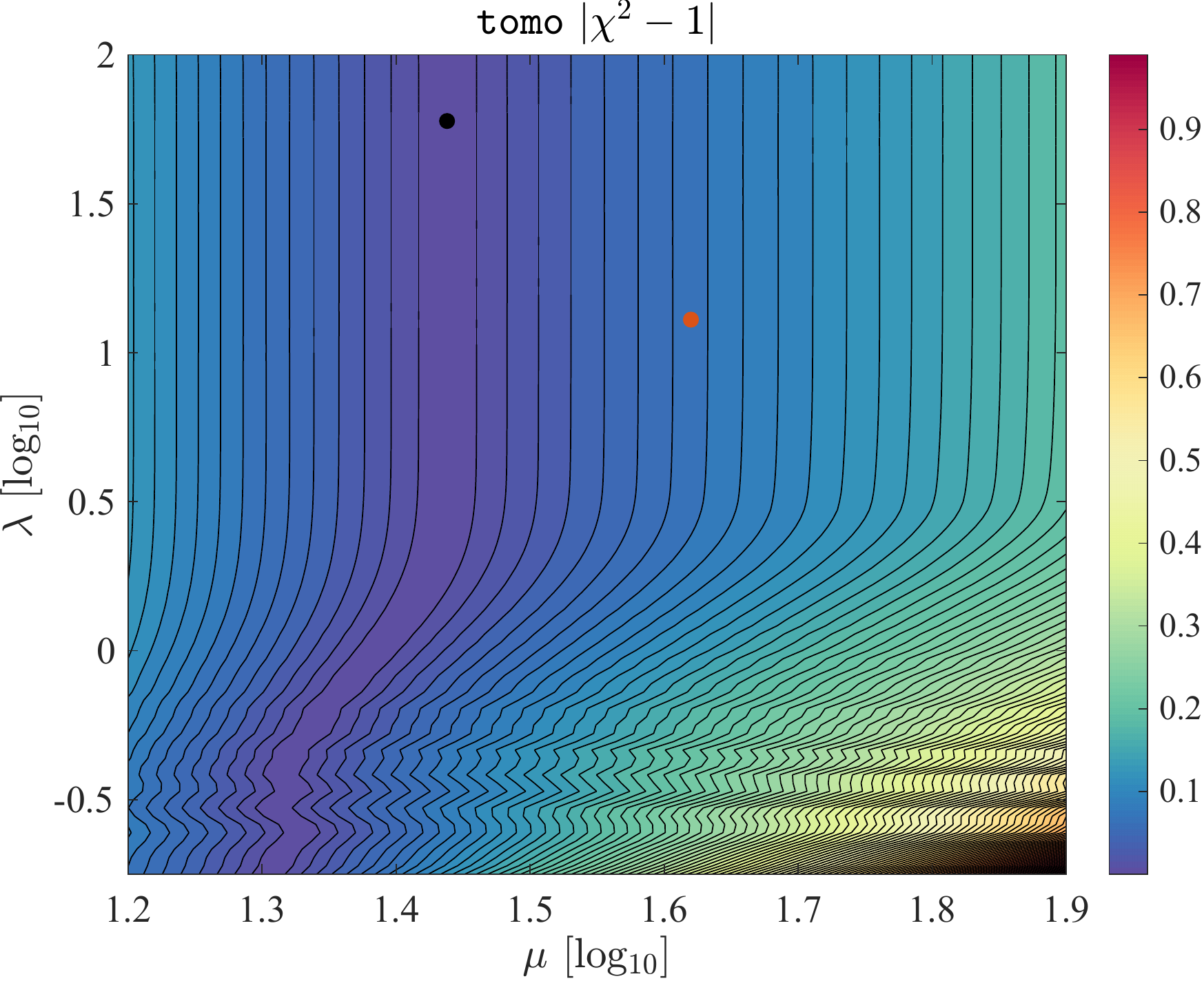}
     \caption{Problem {\tt tomo}:  Calculating the relative error   and $\chi^2$ departure from $m\sigma^2$ for a logarithmically uniform grid of points $(\mu,\lambda)$. The minimum relative error is at the red dot and the minimum for the $\chi^2$ is at the black dot. The obtained minimum values for the relative errors at these points are $0.1301 $ and $0.1449$, and occur for shrinkage parameter $\gamma = 0.2499$, and $0.0076$  respectively. }
    \label{fig:tomo_lambda_mu}
\end{figure}

\begin{figure}
    \centering
    \includegraphics[width=0.48\textwidth]{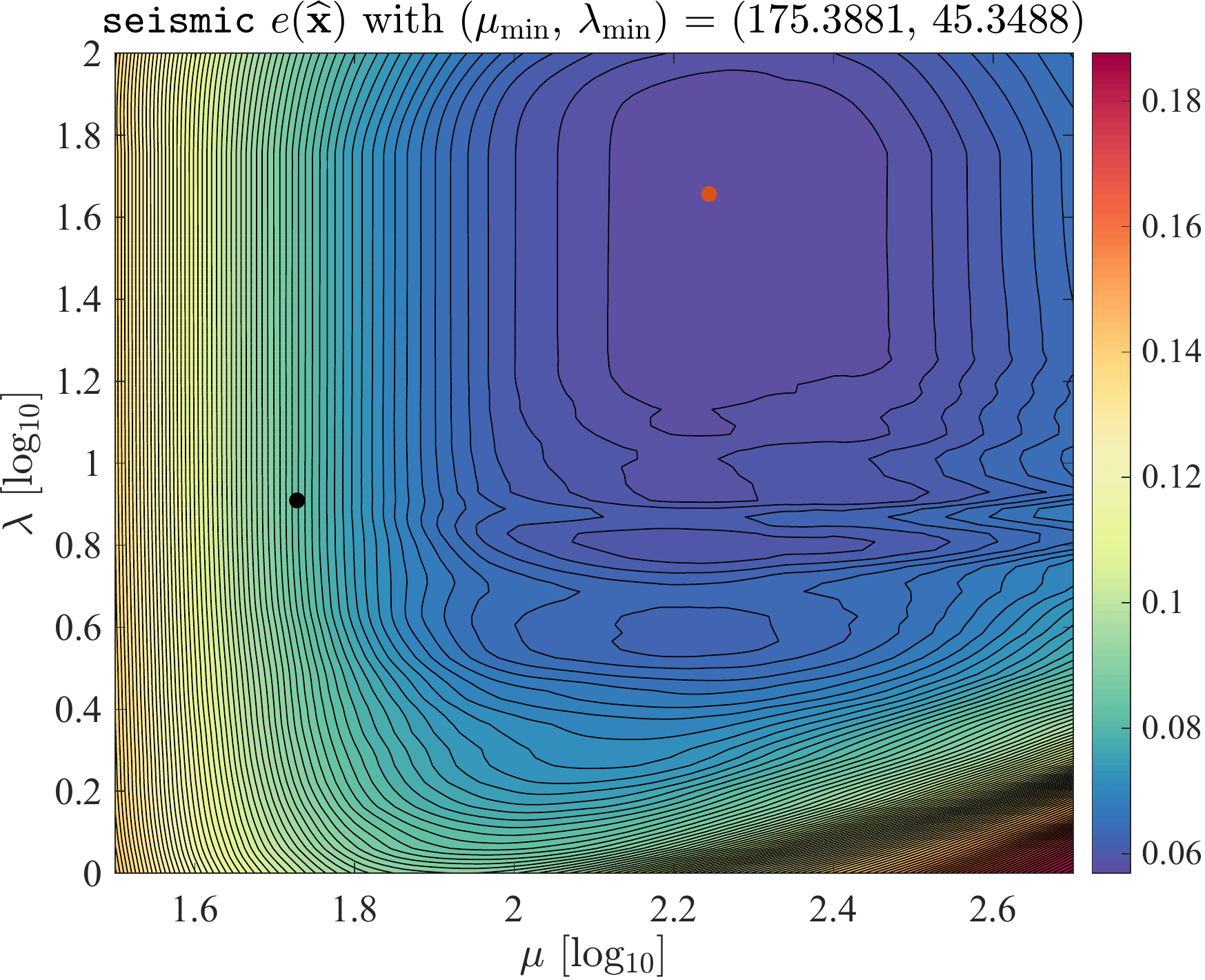}
    \includegraphics[width=0.48\textwidth]{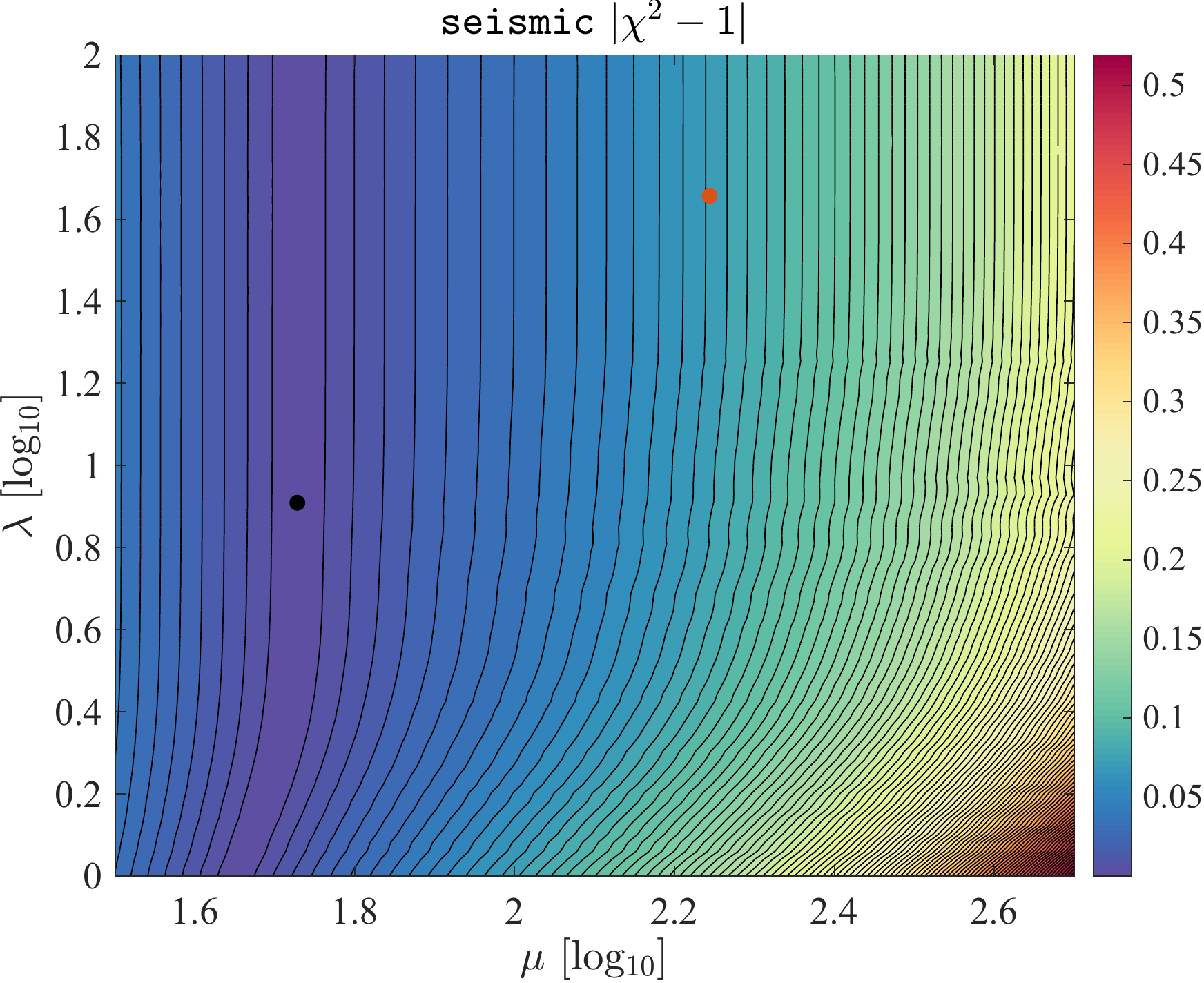}
    \caption{Problem {\tt seismic}:  Calculating the relative error   and $\chi^2$ departure from $m\sigma^2$ for a logarithmically uniform grid of points $(\mu,\lambda)$. The minimum relative error is at the red dot and the minimum for the $\chi^2$ is at the black dot. The obtained minimum values for the relative errors at these points are $0.0569 $ and $0.0846$, and occur for shrinkage parameter $\gamma = 0.0853$, and $0.8125$  respectively.}
    \label{fig:seismic_lambda_mu}
\end{figure}

Finally, to demonstrate the applicability of the  bisection algorithm for other differentially Laplacian operators $\bfD$, we present a small sample of results when $\bfD$ is the Laplace matrix. We use
the same  data sets {\tt blur}, {\tt tomo} and {\tt seismic}, with the same noise levels corresponding to SNRs of $20$ and $13.98$, and the same problem sizes, and replace the TV matrix by the Laplace matrix. In \cref{fig:experimentlaplace}, we collect the results in one plot per test, where in each plot we show the relative error curves for a range of $\lambda$ around the optimal $\mu$ for a specific shrinkage parameter $\gamma$ (solid symbols) found using the $\chi^2$-DF test. The results again demonstrate the ability of the bisection algorithm with the $\chi^2$-DF test to find solutions with near-optimal relative errors.

In summary, we observe empirically that \vpal~converges particularly fast with only a few iterations, when a near-optimal regularization parameter is selected using the $\chi^2$-DF test.
\begin{figure}
    \begin{center}
    \begin{tabular}{ccc}
         {\tt blur}  & {\tt tomo} & {\tt seismic}  \\
         \includegraphics[width=0.32\textwidth]{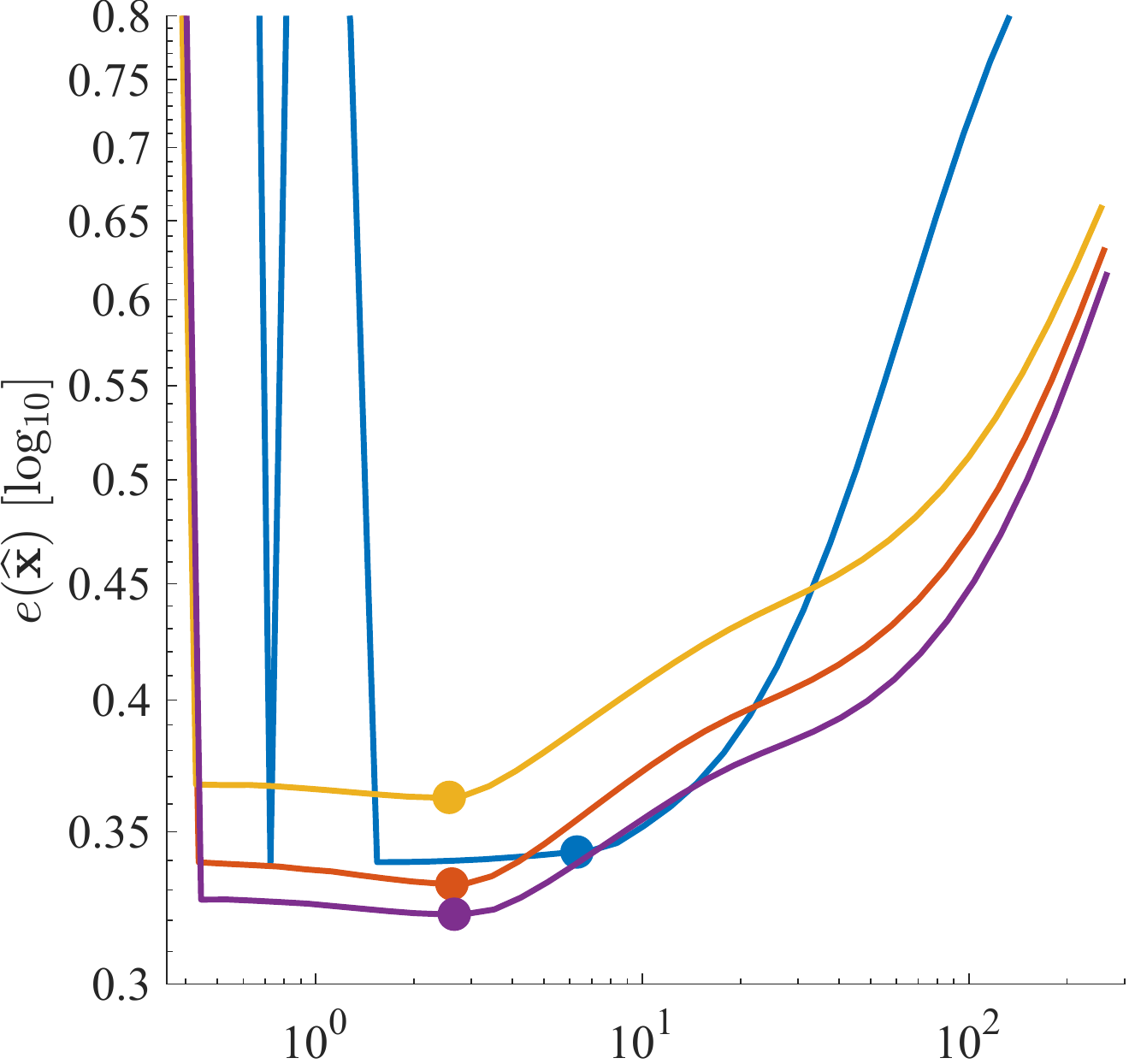} &
         \includegraphics[width=0.29\textwidth]{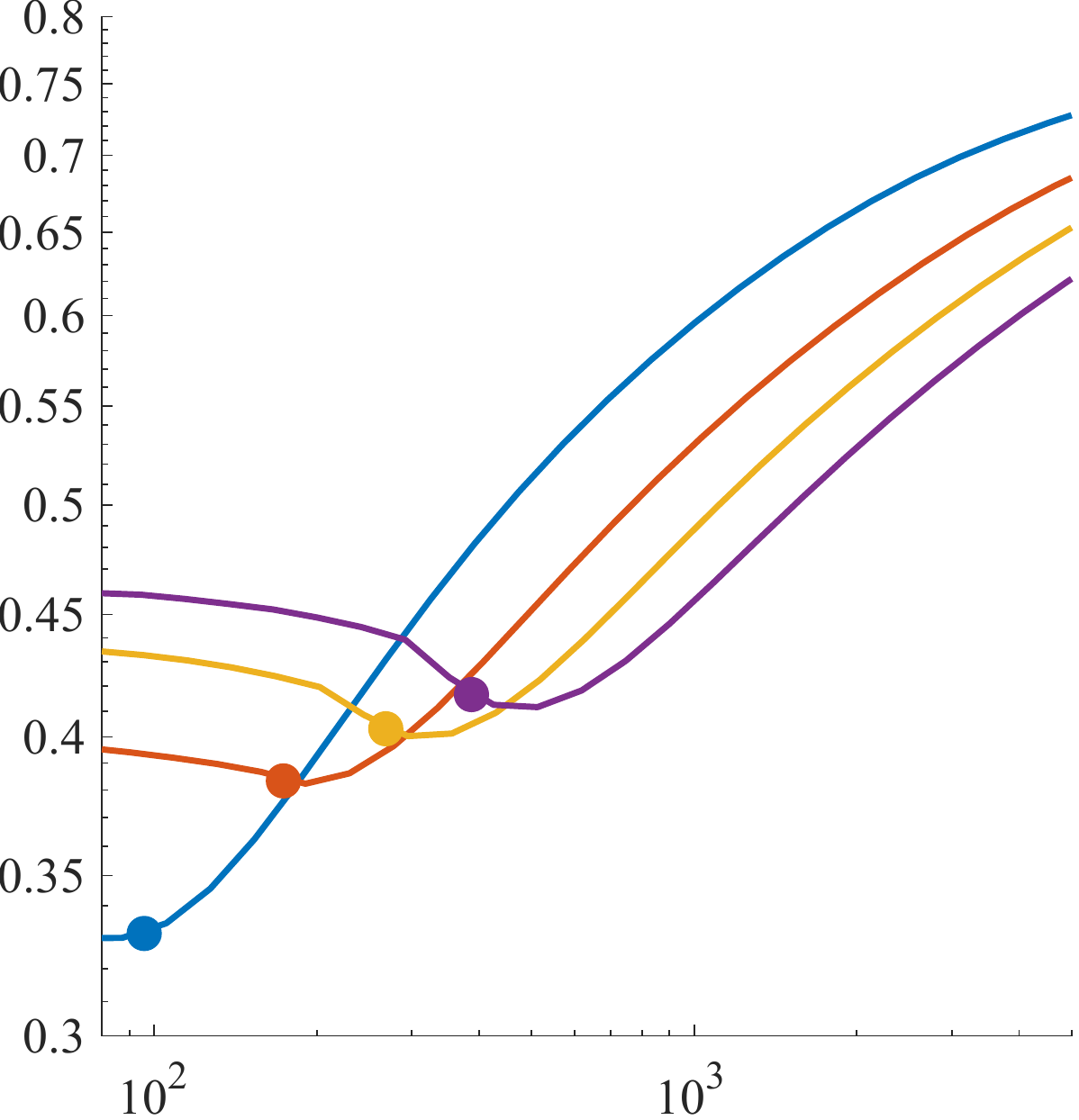} &
         \includegraphics[width=0.29\textwidth]{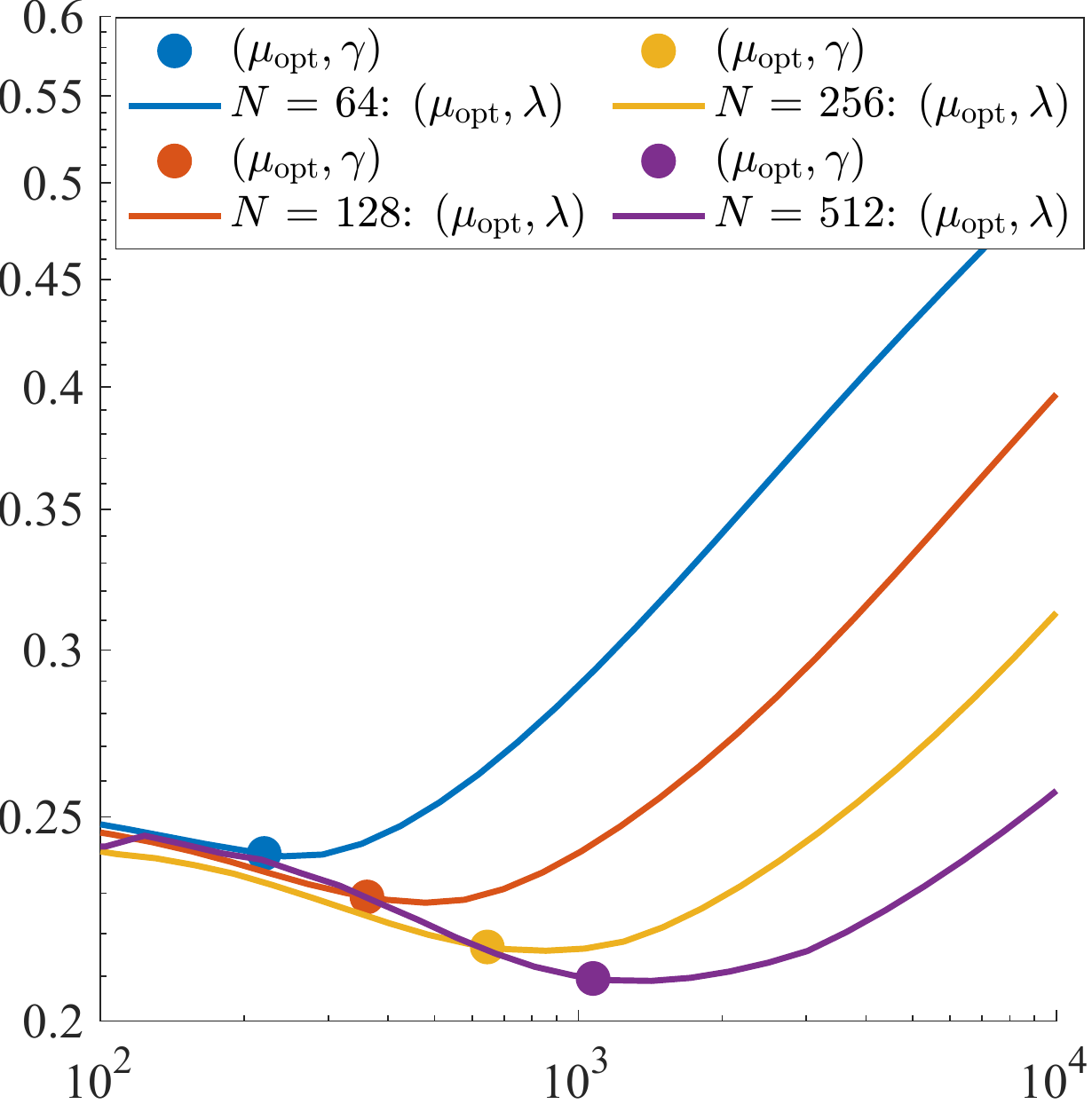} \\
         \includegraphics[width=0.32\textwidth]{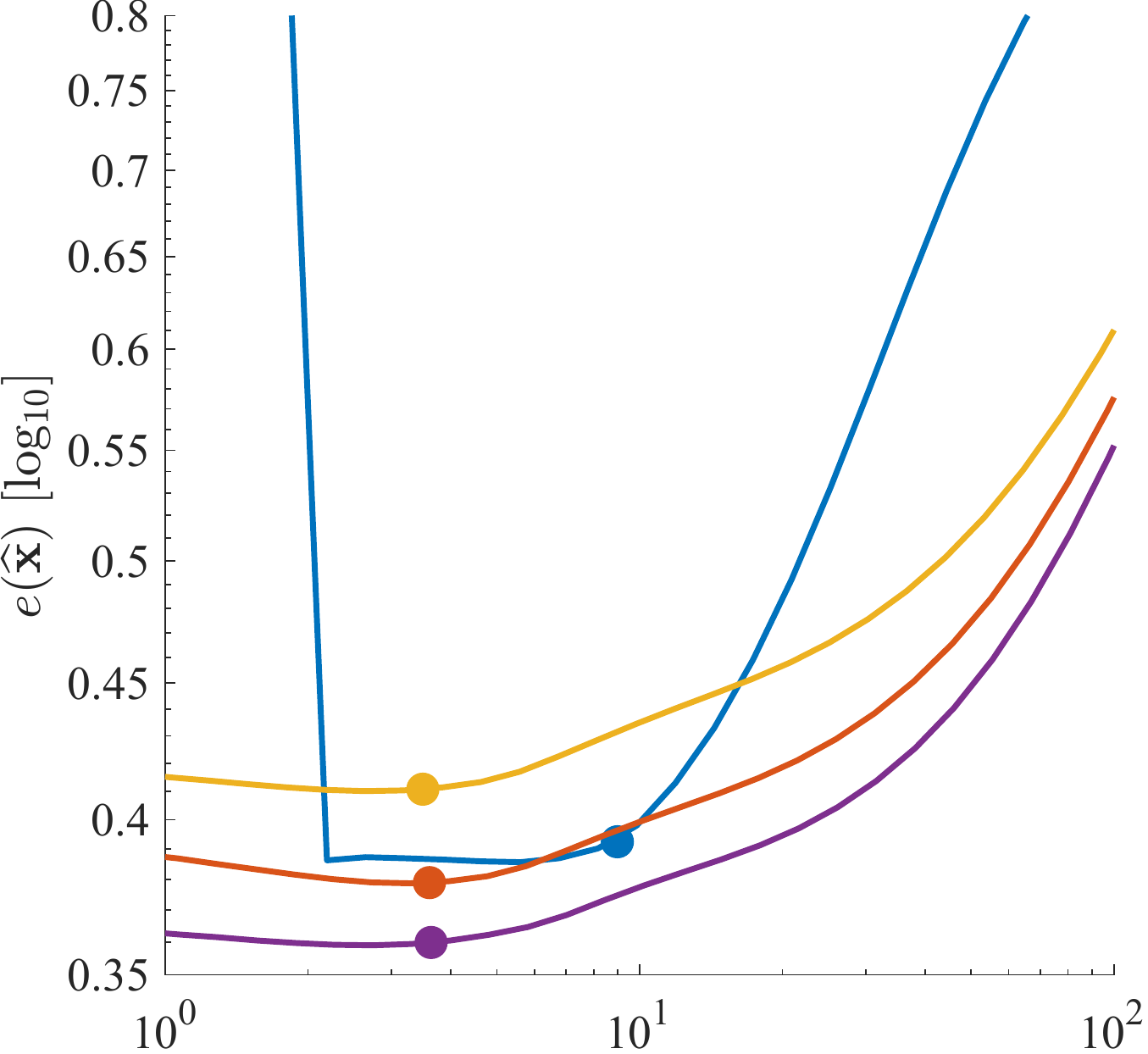} &
         \includegraphics[width=0.29\textwidth]{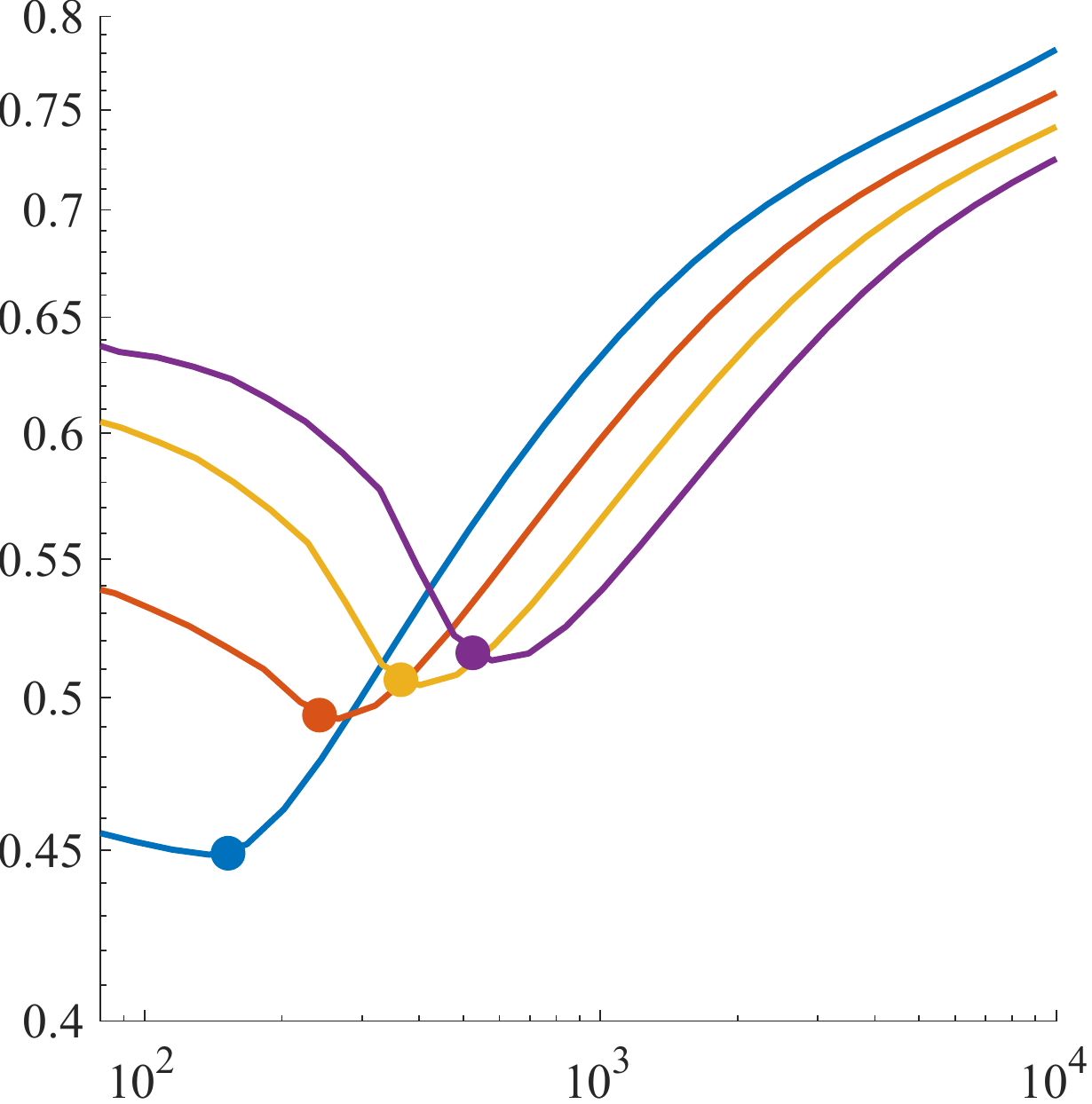} &
         \includegraphics[width=0.29\textwidth]{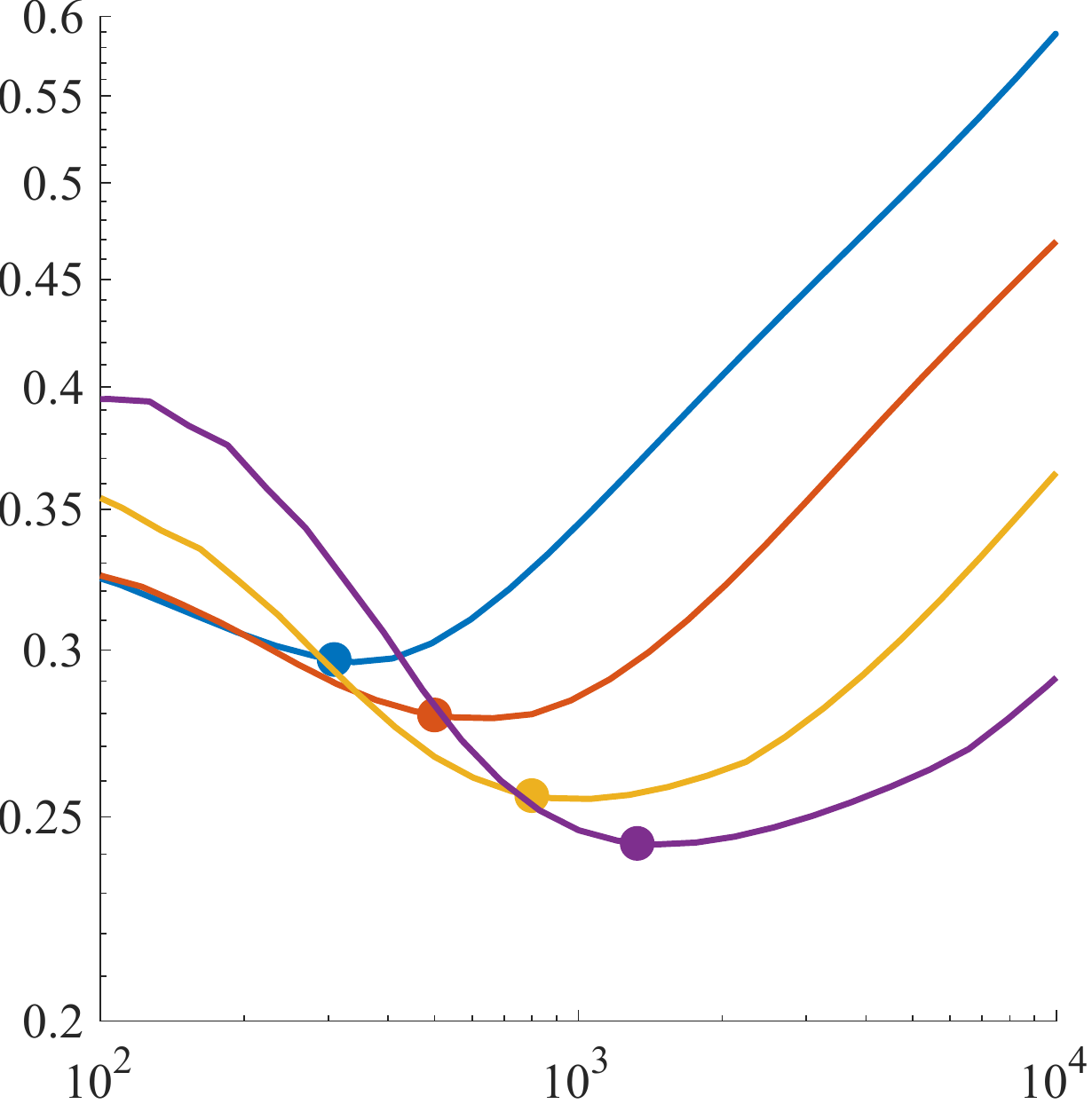}  \\[-1.5ex]
          \tiny $\lambda$ [log$_{10}$] & \tiny $\lambda$ [log$_{10}$]  & \tiny $\lambda$ [log$_{10}$] \\
    \end{tabular}
    \end{center}
    \caption{Results for the {\tt blur}, {\tt tomo} and {\tt seismic} problems using the Laplace operator for $\bfD$. Here blue, red, yellow and purple curves and solid symbols, correspond to results for problems of size $64$, $128$, $256$ and $512$, respectively. The solid symbols indicate the points selected by using the bisection algorithm applied to satisfy the $\chi^2$-DF test.  The  first row shows relative errors  with noise level of $10\%$ corresponding to a signal to noise ratio (SNR) of $20$. Second row shows relative errors with noise level of $20\%$ corresponding to an SNR  of $13.98$.
    \label{fig:experimentlaplace}}
\end{figure}

\section{Discussion \& conclusion}\label{sec:discussion}

In this work we presented a new method for solving generalized total variation problems using an augmented Lagrangian framework that utilizes variable projection methods to solve the inner problem with proven convergence. We further provided an automatic regularization selection method using a degrees of freedom argument. Our investigations included various numerical experiments illustrating the efficiency and effectiveness of our new {\rm vpal} method for different regularization operators $\bfD.$

Our work provides a first investigation into the variable projected augmented Lagrangian method, yet many research items remain open. Future research will take multiple directions. First, while \cref{thm:vpal} provides necessary convergence results, our implementation {\tt vpal} uses an inexact solve of the inner problem, i.e., a single CG update. We will investigate convergence properties of the inexact approach, utilizing results from inexact ADMM methods \cite{hager2019inexact} for the proof. Second, although we utilize a CG update for the inner iteration, other update strategies may be employed. Since this is a nonlinear problem we may for instance utilize LBFGS updates or nonlinear Krylov subspace methods. Third, estimating a good regularization parameter $\mu$ remains a costly task. In \cite{chung2021computational} iterative regularization approaches estimating $\mu$ on a subspace were investigated, and demonstrated a computational advantage. We will extend the DF~argument to be used to find $\mu$ using a standard bisection algorithm at relatively low cost, when the original model parameters can be assumed to be differentially Laplacian, as is the case for standard image deblurring problems.  Fourth, we consider $\ell^2-\ell^1$ norm regularization here; however, the developed approach extends also to other $\ell^p-\ell^q$ norm problems \cite{chung2021efficient} and even more general objective functions. We will investigate the convergence and numerical advantages and disadvantages of utilizing a variable projected approach for such $\ell^p-\ell^q$ problems and for supervised learning loss function fitting within this framework. Fifth, row action methods have been developed to solve least squares and Tikhonov type problems of extremely large-scale where the forward operator $\bfA$ is too large to keep in computer memory \cite{chung2020sampled,slagel2019sampled}. We will investigate how \vpal~can be extended to such settings and investigate convergence properties and sampled regularization approaches. Sixth, we will investigate and extend our variable projected optimization method to other suitable optimization problems such as for efficiently solving the Sylvester equations \cite{benner2009adi}.

\section*{Acknowledgments}
This work was initiated as a part of the SAMSI Program on Numerical Analysis in Data Science in 2020. Any opinions, findings, and conclusions or recommendations expressed in this material are those of the authors and do not necessarily reflect the views of the National Science Foundation. We would like to thank Michael Saunders and Volker Mehrmann for their many helpful suggestions and for their constructive feedback on an early draft of this paper.

\bibliographystyle{siamplain}
\bibliography{references}

\begin{thebibliography}{10}

\bibitem{afkham2021learning}
{\sc B.~M. Afkham, J.~Chung, and M.~Chung}, {\em Learning regularization
  parameters of inverse problems via deep neural networks}, Inverse Problems,
  37 (2021).

\bibitem{ali2019generalized}
{\sc A.~Ali and R.~J. Tibshirani}, {\em The generalized lasso problem and
  uniqueness}, Electronic Journal of Statistics, 13 (2019), pp.~2307--2347.

\bibitem{bardsley2018computational}
{\sc J.~M. Bardsley}, {\em Computational Uncertainty Quantification for Inverse
  Problems}, SIAM, 2018.

\bibitem{beck_teboulle_2009}
{\sc A.~Beck and M.~Teboulle}, {\em A fast iterative shrinkage-thresholding
  algorithm for linear inverse problems}, SIAM Journal on Imaging Sciences, 2
  (2009), p.~183–202, \url{https://doi.org/10.1137/080716542}.

\bibitem{benner2009adi}
{\sc P.~Benner, R.-C. Li, and N.~Truhar}, {\em On the adi method for sylvester
  equations}, Journal of Computational and Applied Mathematics, 233 (2009),
  pp.~1035--1045.

\bibitem{bertsekas1995nonlinear}
{\sc D.~Bertsekas}, {\em Nonlinear Programming}, Athena Scientific, Belmont,
  Massachusetts, 1995.

\bibitem{bertsekas2014constrained}
{\sc D.~Bertsekas}, {\em Constrained Optimization and Lagrange Multiplier
  Methods}, Academic Press, 2014.

\bibitem{boyd2011distributed}
{\sc S.~Boyd, N.~Parikh, E.~Chu, B.~Peleato, and J.~Eckstein}, {\em Distributed
  optimization and statistical learning via the alternating direction method of
  multipliers}, Foundations and Trends® in Machine Learning, 3 (2011),
  pp.~1--122, \url{https://doi.org/10.1561/2200000016},
  \url{http://dx.doi.org/10.1561/2200000016}.

\bibitem{brunton2019data}
{\sc S.~L. Brunton and J.~N. Kutz}, {\em Data-driven Science and Engineering:
  Machine learning, Dynamical systems, and Control}, Cambridge University
  Press, Cambridge, UK, 2019.

\bibitem{BucciniADMM}
{\sc A.~Buccini}, {\em Fast alternating direction multipliers method by
  generalized krylov subspaces}, Journal of Scientific Computing, 90 (2021),
  p.~60.

\bibitem{calvetti2007introduction}
{\sc D.~Calvetti and E.~Somersalo}, {\em An Introduction to Bayesian Scientific
  Computing: Ten Lectures on Subjective Computing}, vol.~2, Springer Science \&
  Business Media, 2007.

\bibitem{candes2005decoding}
{\sc E.~J. Candes and T.~Tao}, {\em Decoding by linear programming}, IEEE
  transactions on information theory, 51 (2005), pp.~4203--4215.

\bibitem{chung2021efficient}
{\sc J.~Chung, M.~Chung, S.~Gazzola, and M.~Pasha}, {\em Efficient learning
  methods for large-scale optimal inversion design}, arXiv preprint
  arXiv:2110.02720,  (2021).

\bibitem{chung2019iterative}
{\sc J.~Chung, M.~Chung, and J.~T. Slagel}, {\em Iterative sampled methods for
  massive and separable nonlinear inverse problems}, in International
  Conference on Scale Space and Variational Methods in Computer Vision,
  Springer, 2019, pp.~119--130.

\bibitem{chung2020sampled}
{\sc J.~Chung, M.~Chung, J.~T. Slagel, and L.~Tenorio}, {\em Sampled limited
  memory methods for massive linear inverse problems}, Inverse Problems, 36
  (2020), p.~054001.

\bibitem{chung2021computational}
{\sc J.~Chung and S.~Gazzola}, {\em Computational methods for large-scale
  inverse problems: a survey on hybrid projection methods}, arXiv preprint
  2105.07221,  (2021).

\bibitem{dittmer2020regularization}
{\sc S.~Dittmer, T.~Kluth, P.~Maass, and D.~O. Baguer}, {\em Regularization by
  architecture: A deep prior approach for inverse problems}, Journal of
  Mathematical Imaging and Vision, 62 (2020), pp.~456--470.

\bibitem{Dossaletal:13}
{\sc C.~Dossal, M.~Kachour, M.~Fadili, G.~Peyr\'e, and C.~Chesneau}, {\em The
  degrees of freedom of the lasso for general design matrix}, Statistica
  Sinica, 23 (2013), pp.~809--828.

\bibitem{eckstein1992douglas}
{\sc J.~Eckstein and D.~P. Bertsekas}, {\em On the douglas—rachford splitting
  method and the proximal point algorithm for maximal monotone operators},
  Mathematical Programming, 55 (1992), pp.~293--318.

\bibitem{Efron2}
{\sc B.~Efron, P.~Burman, L.~Denby, J.~M. Landwehr, C.~L. Mallows, X.~Shen,
  H.-C. Huang, J.~Ye, J.~Ye, and C.~Zhang}, {\em The estimation of prediction
  error: Covariance penalties and cross-validation [with comments, rejoinder]},
  Journal of the American Statistical Association, 99 (2004), pp.~619--642,
  \url{http://www.jstor.org/stable/27590436}.

\bibitem{Efronetal}
{\sc B.~Efron, T.~Hastie, I.~Johnstone, and R.~Tibshirani}, {\em {Least angle
  regression}}, The Annals of Statistics, 32 (2004), pp.~407 -- 499,
  \url{https://doi.org/10.1214/009053604000000067},
  \url{https://doi.org/10.1214/009053604000000067}.

\bibitem{esser2009applications}
{\sc E.~Esser}, {\em Applications of lagrangian-based alternating direction
  methods and connections to split bregman}, CAM report, 9 (2009), p.~31.

\bibitem{fukushima_1992}
{\sc M.~Fukushima}, {\em Application of the alternating direction method of
  multipliers to separable convex programming problems}, Computational
  Optimization and Applications, 1 (1992), p.~93–111,
  \url{https://doi.org/10.1007/bf00247655}.

\bibitem{gabay1976dual}
{\sc D.~Gabay and B.~Mercier}, {\em A dual algorithm for the solution of
  nonlinear variational problems via finite element approximation}, Computers
  \& Mathematics with Applications, 2 (1976), pp.~17--40.

\bibitem{gazzola2019ir}
{\sc S.~Gazzola, P.~C. Hansen, and J.~G. Nagy}, {\em {IR Tools}: a {MATLAB}
  package of iterative regularization methods and large-scale test problems},
  Numerical Algorithms, 81 (2019), pp.~773--811.

\bibitem{getreuer}
{\sc P.~Getreuer}, {\em {Rudin-Osher-Fatemi Total Variation Denoising using
  Split Bregman}}, {Image Processing On Line}, 2 (2012), pp.~74--95.
\newblock \url{https://doi.org/10.5201/ipol.2012.g-tvd}.

\bibitem{gill2019practical}
{\sc P.~E. Gill, W.~Murray, and M.~H. Wright}, {\em Practical Optimization},
  SIAM, 2019.

\bibitem{glowinski1975approximation}
{\sc R.~Glowinski and A.~Marroco}, {\em Sur l'approximation, par
  {\'e}l{\'e}ments finis d'ordre un, et la r{\'e}solution, par
  p{\'e}nalisation-dualit{\'e} d'une classe de probl{\`e}mes de dirichlet non
  lin{\'e}aires}, ESAIM: Mathematical Modelling and Numerical
  Analysis-Mod{\'e}lisation Math{\'e}matique et Analyse Num{\'e}rique, 9
  (1975), pp.~41--76.

\bibitem{goldstein2014fast}
{\sc T.~Goldstein, B.~O'Donoghue, S.~Setzer, and R.~Baraniuk}, {\em Fast
  alternating direction optimization methods}, SIAM Journal on Imaging
  Sciences, 7 (2014), pp.~1588--1623.

\bibitem{goldstein2009split}
{\sc T.~Goldstein and S.~Osher}, {\em The split bregman method for
  {L1}-regularized problems}, SIAM Journal on Imaging Sciences, 2 (2009),
  pp.~323--343.

\bibitem{golub2003separable}
{\sc G.~Golub and V.~Pereyra}, {\em Separable nonlinear least squares: the
  variable projection method and its applications}, Inverse Problems, 19
  (2003), p.~R1.

\bibitem{GoHeWa}
{\sc G.~H. Golub, M.~Heath, and G.~Wahba}, {\em Generalized cross-validation as
  a method for choosing a good ridge parameter}, Technometrics, 21 (1979),
  pp.~215--223.

\bibitem{golub1973differentiation}
{\sc G.~H. Golub and V.~Pereyra}, {\em The differentiation of pseudo-inverses
  and nonlinear least squares problems whose variables separate}, SIAM Journal
  on Numerical Analysis, 10 (1973), pp.~413--432.

\bibitem{gol1979modified}
{\sc E.~G. Gol’shtein and N.~Tretjiakov}, {\em Modified lagrangians in convex
  programming and their generalizations}, in Point-to-Set Maps and Mathematical
  Programming, Springer, 1979, pp.~86--97.

\bibitem{Green:02}
{\sc M.~L. Green}, {\em Statistics of images, the tv algorithm of
  rudin-osher-fatemi for image denoising and an improved denoising algorithm},
  2002.

\bibitem{greenbaum1997iterative}
{\sc A.~Greenbaum}, {\em Iterative Methods for Solving Linear Systems}, SIAM,
  1997.

\bibitem{hadamard1923lectures}
{\sc J.~Hadamard}, {\em Lectures on Cauchy's Problem in Linear Differential
  Equations}, Yale University Press, New Haven, 1923.

\bibitem{hager2019inexact}
{\sc W.~W. Hager and H.~Zhang}, {\em Inexact alternating direction methods of
  multipliers for separable convex optimization}, Computational Optimization
  and Applications, 73 (2019), pp.~201--235.

\bibitem{hansen1998rank}
{\sc P.~C. Hansen}, {\em Rank-deficient and discrete ill-posed problems:
  numerical aspects of linear inversion}, SIAM, 1998.

\bibitem{hansen2010discrete}
{\sc P.~C. Hansen}, {\em Discrete Inverse Problems: Insight and Algorithms},
  SIAM, 2010.

\bibitem{hansen2006deblurring}
{\sc P.~C. Hansen, J.~G. Nagy, and D.~P. O'Leary}, {\em Deblurring Images:
  Matrices, Spectra, and Filtering}, SIAM, 2006.

\bibitem{hestenes1969multiplier}
{\sc M.~R. Hestenes}, {\em Multiplier and gradient methods}, Journal of
  Optimization Theory and Applications, 4 (1969), pp.~303--320.

\bibitem{hestenes1952methods}
{\sc M.~R. Hestenes, E.~Stiefel, et~al.}, {\em Methods of conjugate gradients
  for solving linear systems}, vol.~49 (1), NBS Washington, DC, 1952.

\bibitem{tomobox}
{\sc J.~Jorgensen}, {\em Tomobox}.
\newblock
  \url{https://www.mathworks.com/matlabcentral/fileexchange/28496-tomobox?s_tid=prof_contriblnk}.
\newblock Accessed: November 2021.

\bibitem{4407767}
{\sc S.-J. Kim, K.~Koh, M.~Lustig, S.~Boyd, and D.~Gorinevsky}, {\em An
  interior-point method for large-scale $\ell_1$-regularized least squares},
  IEEE Journal of Selected Topics in Signal Processing, 1 (2007), pp.~606--617,
  \url{https://doi.org/10.1109/JSTSP.2007.910971}.

\bibitem{Mead_2020}
{\sc J.~L. Mead}, {\em Chi-squared test for total variation regularization
  parameter selection}, Inverse Problems \& Imaging, 14 (2020), p.~401–421.

\bibitem{mead2008newton}
{\sc J.~L. Mead and R.~A. Renaut}, {\em A newton root-finding algorithm for
  estimating the regularization parameter for solving ill-conditioned least
  squares problems}, Inverse Problems, 25 (2008), p.~025002.

\bibitem{meju1994geophysical}
{\sc M.~A. Meju}, {\em Geophysical data analysis: understanding inverse problem
  theory and practice}, Society of Exploration Geophysicists, 1994.

\bibitem{Morozov1966}
{\sc V.~A. Morozov}, {\em On the solution of functional equations by the method
  of regularization}, Soviet Mathematics Doklady, 7 (1966), pp.~414--417.

\bibitem{newman2021slimtrain}
{\sc E.~Newman, J.~Chung, M.~Chung, and L.~Ruthotto}, {\em slimtrain--a
  stochastic approximation method for training separable deep neural networks},
  arXiv preprint arXiv:2109.14002,  (2021).

\bibitem{newman2020train}
{\sc E.~Newman, L.~Ruthotto, J.~Hart, and B.~v.~B. Waanders}, {\em Train like a
  (var) pro: Efficient training of neural networks with variable projection},
  arXiv preprint arXiv:2007.13171,  (2020).

\bibitem{nocedal2006numerical}
{\sc J.~Nocedal and S.~Wright}, {\em Numerical Optimization}, Springer Science
  \& Business Media, 2006.

\bibitem{osher2005iterative}
{\sc S.~Osher, M.~Burger, D.~Goldfarb, J.~Xu, and W.~Yin}, {\em An iterative
  regularization method for total variation-based image restoration},
  Multiscale Modeling \& Simulation, 4 (2005), pp.~460--489.

\bibitem{o2013variable}
{\sc D.~P. O’Leary and B.~W. Rust}, {\em Variable projection for nonlinear
  least squares problems}, Computational Optimization and Applications, 54
  (2013), pp.~579--593.

\bibitem{paige1982lsqr}
{\sc C.~C. Paige and M.~A. Saunders}, {\em Lsqr: An algorithm for sparse linear
  equations and sparse least squares}, ACM Transactions on Mathematical
  Software (TOMS), 8 (1982), pp.~43--71.

\bibitem{parikh2014proximal}
{\sc N.~Parikh and S.~Boyd}, {\em Proximal algorithms}, Foundations and Trends
  in optimization, 1 (2014), pp.~127--239.

\bibitem{powell1969method}
{\sc M.~J. Powell}, {\em A method for nonlinear constraints in minimization
  problems}, Optimization,  (1969), pp.~283--298.

\bibitem{RHM10}
{\sc R.~A. Renaut, I.~Hn{\v e}tynkov{\'a}, and J.~Mead}, {\em Regularization
  parameter estimation for large-scale {T}ikhonov regularization using a priori
  information}, Computational Statistics and Data Analysis, 54 (2010), pp.~3430
  -- 3445, \url{https://doi.org/http://dx.doi.org/10.1016/j.csda.2009.05.026},
  \url{http://www.sciencedirect.com/science/article/pii/S0167947309002278}.

\bibitem{santamarina2005discrete}
{\sc J.~C. Santamarina and D.~Fratta}, {\em Discrete Signals and Inverse
  Problems: An Introduction for Engineers and Scientists}, John Wiley \& Sons,
  2005.

\bibitem{sjoberg1997separable}
{\sc J.~Sjoberg and M.~Viberg}, {\em Separable non-linear least-squares
  minimization-possible improvements for neural net fitting}, in Neural
  Networks for Signal Processing VII. Proceedings of the 1997 IEEE Signal
  Processing Society Workshop, IEEE, 1997, pp.~345--354.

\bibitem{slagel2019sampled}
{\sc J.~T. Slagel, J.~Chung, M.~Chung, D.~Kozak, and L.~Tenorio}, {\em Sampled
  tikhonov regularization for large linear inverse problems}, Inverse Problems,
  35 (2019), p.~114008.

\bibitem{stefanov2001separable}
{\sc S.~M. Stefanov}, {\em Separable Programming: Theory and Methods}, vol.~53,
  Springer Science \& Business Media, 2001.

\bibitem{teng2016admm}
{\sc Y.~Teng, H.~Sun, C.~Guo, and Y.~Kang}, {\em Admm-em method for $l_1$-norm
  regularized weighted least squares pet reconstruction}, Computational and
  Mathematical Methods in Medicine, 2016 (2016).

\bibitem{tibshirani1996regression}
{\sc R.~Tibshirani}, {\em Regression shrinkage and selection via the lasso},
  Journal of the Royal Statistical Society: Series B (Methodological), 58
  (1996), pp.~267--288.

\bibitem{tibshirani2017sparsity}
{\sc R.~Tibshirani and L.~Wasserman}, {\em Sparsity, the lasso, and friends},
  Lecture notes from “Statistical Machine Learning,” Carnegie Mellon
  University, Spring,  (2017).

\bibitem{TibshiraniTaylor:2011}
{\sc R.~J. Tibshirani and J.~Taylor}, {\em {The solution path of the
  generalized lasso}}, The Annals of Statistics, 39 (2011), pp.~1335 -- 1371,
  \url{https://doi.org/10.1214/11-AOS878},
  \url{https://doi.org/10.1214/11-AOS878}.

\bibitem{TibshiraniTaylor}
{\sc R.~J. Tibshirani and J.~Taylor}, {\em {Degrees of freedom in lasso
  problems}}, The Annals of Statistics, 40 (2012), pp.~1198 -- 1232,
  \url{https://doi.org/10.1214/12-AOS1003},
  \url{https://doi.org/10.1214/12-AOS1003}.

\bibitem{Vogel:2002}
{\sc C.~Vogel}, {\em Computational Methods for Inverse Problems}, Society for
  Industrial and Applied Mathematics, Philadelphia, 2002,
  \url{https://doi.org/10.1137/1.9780898717570},
  \url{http://epubs.siam.org/doi/abs/10.1137/1.9780898717570},
  \url{https://arxiv.org/abs/http://epubs.siam.org/doi/pdf/10.1137/1.9780898717570}.

\bibitem{wang2019global}
{\sc Y.~Wang, W.~Yin, and J.~Zeng}, {\em Global convergence of admm in
  nonconvex nonsmooth optimization}, Journal of Scientific Computing, 78
  (2019), pp.~29--63.

\bibitem{wright2015coordinate}
{\sc S.~J. Wright}, {\em Coordinate descent algorithms}, Mathematical
  Programming, 151 (2015), pp.~3--34.

\bibitem{Zouetal}
{\sc H.~Zou, T.~Hastie, and R.~Tibshirani}, {\em On the ``degrees of freedom''
  of the lasso}, The Annals of Statistics, 35 (2007), pp.~2173 -- 2192.

\end{thebibliography}

\end{document}